\numberwithin{equation}{section}
\numberwithin{figure}{section}
\theoremstyle{plain}
\newtheorem{thm}{\protect\theoremname}[section]
\theoremstyle{plain}
\newtheorem{conjecture}[thm]{\protect\conjecturename}
\theoremstyle{definition}
\newtheorem{example}[thm]{\protect\examplename}
\theoremstyle{remark}
\newtheorem{rem}[thm]{\protect\remarkname}
\theoremstyle{plain}
\newtheorem{prop}[thm]{\protect\propositionname}
\def\makebbb#1{
    \expandafter\gdef\csname#1\endcsname{
        \ensuremath{\Bbb{#1}}}
}\makebbb{R}\makebbb{N}\makebbb{Z}\makebbb{C}\makebbb{H}\makebbb{E}\makebbb{H}\makebbb{P}\makebbb{B}\makebbb{Q}\makebbb{E}\makebbb{E}
\providecommand{\conjecturename}{Conjecture}
\providecommand{\examplename}{Example}
\providecommand{\propositionname}{Proposition}
\providecommand{\remarkname}{Remark}
\providecommand{\theoremname}{Theorem}
\begin{document}
\title{On K-stability, height bounds and the Manin-Peyre conjecture }
\author{Robert J. Berman}
\dedicatory{In memory of Jean-Pierre Demailly }
\begin{abstract}
This note discusses some intriguing connections between height bounds
on complex K-semistable Fano varieties $X$ and Peyre's conjectural
formula for the density of rational points on $X.$ Relations to an
upper bound for the smallest rational point, proposed by Elsenhans-Jahnel,
are also explored. These relations suggest an analog of the height
inequalities, adapted to the real points, which is established for
the real projective line and related to Kähler-Einstein metrics.
\end{abstract}

\maketitle

\section{Introduction}

The main aim of this note is to discuss some intriguing relations
between bounds on the height of K-semistable complex Fano varieties
and the Manin-Peyre conjecture, concerning the density of rational
points on Fano varieties, that emerged from the work \cite{a-b0}
on height bounds. Along the way a non-technical introduction to the
results in \cite{a-b0,a-b} is provided. Throughout the paper additive
notation for line bundles is employed. In particular, the anti-canonical
line bundle of a non-singular complex projective variety $X$ (i.e.
the top exterior power of the tangent bundle) is denoted by $-K_{X}.$

\subsection{K-stability and height bounds}

Let $X$ be an $n-$dimensional non-singular complex \emph{Fano variety},
i.e. $-K_{X}$ is ample $(-K_{X}>0).$ Since there are only finitely
many families of such varieties their degree, i.e. the top algebraic
intersection number $(-K_{X})^{n}$ of $-K_{X}$, is uniformly bounded
from above by a constant $C_{n}$ only depending on $n.$ For some
time it was expected that the maximal degree is attained when $X=\P^{n}:$
\begin{equation}
(-K_{X})^{n}\leq(-K_{\P^{n}})^{n}\label{eq:fujita intro}
\end{equation}
For example, for $n\leq3$ this follows from the classification of
all $105$ families of non-singular Fano varieties. However, in general,
it turns out that there is not even any polynomial bound on $(-K_{X})^{n}$
for toric Fano manifolds $X$ \cite[page 139]{de0}. Still, as shown
in \cite{ber-ber2} under a symmetry assumption (including the toric
case) and \cite{fu}, in general, the bound \ref{eq:fujita intro}
does hold if one restricts to (possibly singular) \emph{K-semistable
}Fano varieties. Moreover, equality holds only for complex projective
space \cite{liu}. In other words, $\P^{n}$ may be characterized
as the K-semistable Fano variety with the largest degree. In contrast,
for singular Fano varieties the degree can be arbitrarily large in
any given dimension $n$ if K-semistability is not assumed (see \cite[Ex 4.2]{de}
for a simple two-dimensional toric examples). 

The notion of K-stability first arose in the context of the Yau-Tian-Donaldson
conjecture for Fano manifolds, saying that a Fano manifold admits
a \emph{Kähler-Einstein} metric if and only if it is \emph{K-polystable}
\cite{ti,do1}. The conjecture was settled in \cite{c-d-s} and very
recently also established for singular Fano varieties \cite{li1,l-x-z}.
From a purely algebro-geometric perspective K-stability can be viewed
as a limiting form of Chow and Hilbert-Mumford stability that enables
a good theory of moduli spaces (see the survey \cite{x}). 

In \cite{a-b0} a conjectural arithmetic analog of the degree bound
\ref{eq:fujita intro} was proposed. In order to formulate this conjecture
first recall that in the arithmetic setup the Fano variety $X$ is
assumed to be defined over $\Q$. This means that $X$ can be embedded
in a projective space $\P^{m}$ in such a way that $X$ is cut-out
by homogeneous polynomials with rational coefficients. Such an embedding
determines a projective scheme $\mathcal{X}$ over $\Z,$ called a\emph{
model} for $X$ over $\Z.$ We shall call the scheme $\mathcal{X}$
an\emph{ arithmetic Fano variety} if its relative anti-canonical line
bundle over $\Z$ defines as a relatively ample $\Q-$line bundle,
denoted by $-\mathcal{K}_{\mathcal{X}}.$ In this arithmetic setup
there is an analog of the degree called the\emph{ height} \cite{fa,b-g-s}.
This is a real-number, defined with respect to a continuous metric
$\left\Vert \cdot\right\Vert $ on $-K_{X}.$ The height, that we
shall denote by $h_{\left\Vert \cdot\right\Vert }(-\mathcal{K}_{\mathcal{X}})$
is, however, never uniformly bounded from above, as follows readily
from its transformation property under scaling of the metric $\left\Vert \cdot\right\Vert $
(see formula \ref{eq:scaling of chi vol}). Accordingly, in the conjectural
arithmetic analog of the degree bound \ref{eq:fujita intro}, proposed
in \cite{a-b0}, scalings are ruled out by assuming that the metric
$\left\Vert \cdot\right\Vert $ is \emph{volume-normalized} in the
sense that the measure $\mu_{\left\Vert \cdot\right\Vert }$ on $X$
induced the metric on $-K_{X}$ gives total volume one to $X:$ 
\begin{conjecture}
\label{conj:height intro} Let $\mathcal{X}$ be an arithmetic Fano
variety. The following height inequality holds for any volume-normalized
continuous metric on $-K_{X}$ with positive curvature current if
$X$ is K-semistable:
\[
h_{\left\Vert \cdot\right\Vert }(-\mathcal{K}_{\mathcal{X}})\leq h_{\left\Vert \cdot\right\Vert _{\text{FS}}}(-\mathcal{K}_{\P_{\Z}^{n}}),
\]
where $-K_{\P_{\C}^{n}}$ is endowed with the volume normalized Fubini-Study
metric. Moreover, if $\mathcal{X}$ is normal equality holds if and
only if $\mathcal{X}=\P_{\Z}^{n}$ and the metric is Kähler-Einstein,
i.e. coincides with the Fubini-Study metric, modulo the action of
an automorphism.
\end{conjecture}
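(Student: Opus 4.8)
The plan is to prove the inequality by decomposing the arithmetic height of $-\mathcal{K}_{\mathcal{X}}$ into an archimedean part, to be optimized over the metric, and a non-archimedean part, to be optimized over the model, and then comparing the resulting ``canonical height'' with that of $\P_{\Z}^{n}$.

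First, following Arakelov intersection theory \cite{b-g-s}, I would write $h_{\left\Vert \cdot\right\Vert }(-\mathcal{K}_{\mathcal{X}})$, up to the fixed dimensional constant, as the arithmetic self-intersection number $(-\mathcal{K}_{\mathcal{X}})^{n+1}$, and split it as $\mathcal{E}(\phi)+N(\mathcal{X})$, where $\phi$ is the weight on $-K_{X}$ over $X_{\C}$ determined by $\left\Vert \cdot\right\Vert $, where $\mathcal{E}$ is, on volume-normalized weights, the Aubin-Mabuchi (Monge-Amp\`ere) energy --- equivalently, minus the Ding functional --- and where $N(\mathcal{X})$ collects the intersection contributions along the finite fibres of $\mathcal{X}$. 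The scaling identity (\ref{eq:scaling of chi vol}) is precisely what makes $\mathcal{E}(\phi)$ well defined on the affine space of volume-normalized positively curved weights; on that space its critical points are the K\"ahler-Einstein weights, and K-semistability of $X$ (Ding-semistability, $\delta(X_{\C})\geq 1$) is exactly the assertion that $\mathcal{E}$ is bounded from above there, the supremum being attained at a K\"ahler-Einstein weight when $X$ is K-polystable by the resolution of the Yau-Tian-Donaldson conjecture \cite{c-d-s}. This reduces Conjecture \ref{conj:height intro} to the inequality $h^{\mathrm{can}}(\mathcal{X})\leq h^{\mathrm{can}}(\P_{\Z}^{n})$, where $h^{\mathrm{can}}(\mathcal{X}):=N(\mathcal{X})+\sup_{\phi}\mathcal{E}(\phi)$, for every arithmetic Fano model $\mathcal{X}$ of every K-semistable Fano variety $X$, with the equality case to be analyzed afterwards.

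The core of the argument is then an \emph{arithmetic Fujita inequality} yielding this bound, which I would attempt by transplanting Fujita's proof of (\ref{eq:fujita intro}) \cite{fu} --- which already applies to K-semistable, not merely K-polystable, varieties --- or the equivalent normalized-volume / $\delta$-invariant argument, to the Arakelov setting: express the relevant heights through the leading-order asymptotics of the small-section counts $\log\#\{\,s\in H^{0}(\mathcal{X},-mK_{\mathcal{X}}):\left\Vert s\right\Vert _{\sup}\leq 1\,\}$ as $m\to\infty$ via the arithmetic Hilbert-Samuel theorem, estimate these counts using the filtration coming from $m$-basis-type divisors, whose singularities are governed by the stability threshold $\delta(X_{\C})\geq 1$, and sum the resulting local estimates over all places of $\Q$, the archimedean place being supplied by the previous step. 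For the equality case, equality in the bound forces equality in the complex degree bound (\ref{eq:fujita intro}), hence $X_{\C}\cong\P_{\C}^{n}$ by Liu's theorem \cite{liu}; a rigidity input --- maximality of the height of $\P_{\Z}^{n}$ among the normal arithmetic Fano models of $\P_{\C}^{n}$, together with the fact that on such a model the Fubini-Study weight is the unique $\mathcal{E}$-maximizer --- then forces $\mathcal{X}=\P_{\Z}^{n}$ with $\left\Vert \cdot\right\Vert =\left\Vert \cdot\right\Vert _{\mathrm{FS}}$, modulo an automorphism.

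I expect the last step to be the genuine obstacle. K-semistability is a condition on the complex fibre $X_{\C}$ alone, yet the inequality must control the finite-place term $N(\mathcal{X})$, so one has to extract arithmetic consequences from a purely transcendental hypothesis; making Fujita's argument ``arithmetic'' uniformly over all primes --- and sharp enough for the equality analysis, which has to exclude every nontrivial integral structure --- appears to require developing the theory of arithmetic Okounkov bodies and arithmetic Duistermaat-Heckman measures well beyond its current state. By contrast, the first two steps rest on established technology, namely Arakelov geometry and the pluripotential-theoretic approach to K\"ahler-Einstein metrics developed in \cite{a-b0,a-b}, so essentially the entire difficulty is concentrated in turning the geometric Fujita bound into an arithmetic one.
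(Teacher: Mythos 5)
The statement you are addressing is an open conjecture. The paper does not prove it and does not claim to: it establishes special cases only (the toric case with $n\leq 6$ and $\Q$-factorial in Theorem \ref{thm:main toric intro}, the case $n=1$ in Theorem \ref{thm:main conj on p one complex}, and diagonal Fano hypersurfaces in Theorem \ref{thm:diagonal hypersurface intro}). Your proposal is therefore not a rediscovery of a hidden proof, and, to your credit, you do not claim one.

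That said, your reduction step tracks the paper's own reformulation quite closely. Interpreting the height as (minus) an arithmetic Ding functional, observing that volume normalization kills the scaling freedom (formula \ref{eq:scaling of chi vol}), and invoking Li's theorem that K-semistability of $X_{\C}$ is equivalent to lower boundedness of the Ding functional, is exactly the content of Theorem \ref{thm:arithm Vol and K semi st}, and the recast of Conjecture \ref{conj:height intro} as a supremum statement over $\phi$ is precisely Conjecture \ref{conj:height not intro}. You also put your finger on the genuine obstruction: K-semistability is a hypothesis on the complex fibre alone, while the quantity to be bounded depends on the integral model, so one must somehow control the finite-place contribution uniformly over all models $\mathcal{X}$ with $-\mathcal{K}_{\mathcal{X}}$ relatively ample. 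The paper does not have an "arithmetic Fujita inequality" of the kind you sketch; the special cases are handled by arithmetic surface intersection theory when $n=1$, by explicit toric formulas combined with a volume gap hypothesis in the toric case, and by explicit adjunction (formula \ref{eq:adjunction isomo}) for diagonal hypersurfaces. The natural general arithmetic input the paper points to instead is a conjecture of Odaka that globally K-semistable models should maximize the height over all models, which is exactly the slot your ``arithmetic Fujita'' step would need to fill, and which remains open.

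One technical caveat on your framing: the split of the height into a purely archimedean energy $\mathcal{E}(\phi)$ plus a purely non-archimedean term $N(\mathcal{X})$ is not canonical, since $\mathcal{E}$ is only defined up to a choice of reference weight, and this choice in turn involves the model. The paper sidesteps this by taking the supremum over $\phi$ of the full arithmetic Ding functional (formula \ref{eq:arithm Ding}), which is an intrinsic invariant of $(\mathcal{X},-\mathcal{K}_{\mathcal{X}})$ rather than a sum of local pieces. This is more than cosmetic: the non-canonicity of your decomposition is one concrete reason why a place-by-place argument in the style of a product formula is hard to organize, and it is part of why the conjecture is still a conjecture.
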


The conjecture is established for some families of Fano varieties
in \cite{a-b0,a-b} (see Section \ref{sec:K-semistability-and-height}).
The converse ``only if'' statement to the conjecture does hold,
in general. In fact, 
\[
X\,\text{is K-semistable \ensuremath{\text{\ensuremath{\iff} \ensuremath{\sup_{\left\Vert \cdot\right\Vert }h_{\left\Vert \cdot\right\Vert }(\mathcal{L})<\infty} }}}
\]
for any given relatively ample model $(\mathcal{X},\mathcal{L})$
of $(X,-K_{X})$ over $\Z,$ where the sup runs over all volume-normalized
continuous psh metrics on $\mathcal{L}$ (see Theorem \ref{thm:arithm Vol and K semi st}).
However, the upper bound on $h_{\left\Vert \cdot\right\Vert }(\mathcal{L})$
will, in general, depend on the model $\mathcal{L}$ of $-K_{X}$
over $\Z,$unless $\mathcal{L}=-\mathcal{K}_{\mathcal{X}}.$ 

It should be stressed that, in contrast to the degree, the height
$h_{\left\Vert \cdot\right\Vert }(-\mathcal{K}_{\mathcal{X}})$ can
rarely be computed explicitly. But since $h_{\left\Vert \cdot\right\Vert _{\text{FS}}}(-\mathcal{K}_{\P_{\Z}^{n}})$
admits an explicit formula \cite[ §5.4]{g-s},  the previous conjecture
amounts to the following sharp explicit bound on $h_{\left\Vert \cdot\right\Vert }(-\mathcal{K}_{\mathcal{X}}):$

\begin{equation}
h_{\left\Vert \cdot\right\Vert }(-\mathcal{K}_{\mathcal{X}})\leq c_{n},\,\,\,c_{n}:=\frac{1}{2}(n+1)^{n+1}\left((n+1)\sum_{k=1}^{n}k^{-1}-n+\log(\frac{\pi^{n}}{n!})\right).\label{eq:expl formul on p n}
\end{equation}
Equivalently, removing the assumption that $\left\Vert \cdot\right\Vert $
be volume-normalized,
\begin{equation}
\frac{1}{(n+1)}h_{\left\Vert \cdot\right\Vert }(-\mathcal{K}_{\mathcal{X}})+\frac{\text{\ensuremath{(-K_{X})^{n}}}}{2}\log\left(\mu_{\left\Vert \cdot\right\Vert }(X)\right)\leq c_{n}\label{eq:conj with mu volume intro}
\end{equation}

As will be discussed below, this estimate appears to have some intriguing
relations to the Manin-Peyre conjecture in Diophantine geometry, both
as an upper bound on $h_{\left\Vert \cdot\right\Vert }(-\mathcal{K}_{\mathcal{X}}),$
in term of the inverse of $\mu_{\left\Vert \cdot\right\Vert }(X),$
as well as an upper bound on $\mu_{\left\Vert \cdot\right\Vert }(X),$
in terms of $-h_{\left\Vert \cdot\right\Vert }(-\mathcal{K}_{\mathcal{X}}).$ 

Before turning to the Manin-Peyre conjecture, we recall that the height
has an important finiteness property, known as the \emph{Northcott
property: }there is only a finite number of subschemes $\mathcal{X}$
of a given projective space $\P_{\Z}^{m}$ over $\Z$ with uniformly
bounded degree and height, with respect to a fixed metric $\left\Vert \cdot\right\Vert $
on the restriction of the $\mathcal{O}(1)-$line bundle on $\P_{\Z}^{m}:$
\begin{equation}
\sharp\left\{ \mathcal{X}:\,\,\text{\ensuremath{(\mathcal{O}(1)_{|X})^{\dim X}\cdot X\leq d,\,\,\,h_{\left\Vert \cdot\right\Vert }(}}\mathcal{O}(1)_{|\mathcal{X}})\leq B\right\} <\infty.\label{eq:Northcott}
\end{equation}
for given positive numbers $d$ and $B.$ For example, when $\mathcal{X}$
is a hypersurface of a given degree $d$ this follows directly from
the fact that the height $\text{\ensuremath{h_{\left\Vert \cdot\right\Vert }}(}\mathcal{O}(1)_{|\mathcal{X}})$
is comparable to the logarithm of the maximum absolute value $|a_{i}|$
of the integer coefficients $a_{i}$ of the homogeneous polynomial
cutting out $\mathcal{X}$ (see formula \ref{eq:bounds on height of Weil}). 

\subsection{The Manin-Peyre conjecture for the density of rational points}

It is expected that a large class of Fano varieties $X$ defined over
$\Q$ have an abundance of rational points, or more precisely, that
the set of rational points $X(\Q)$ is Zariski dense \cite{b-k-s}.
For example, this is automatically the case for varieties $X$ which
are birationally equivalent to $\P^{n}$ over $\Q.$ In particular,
there is then an infinite number of rational points. In order to quantify
the density of rational points one can first fix a model $(\mathcal{X},\mathcal{L})$
for $(X,-K_{X})$ over $\Z$ and a metric $\left\Vert \cdot\right\Vert $
on $-K_{X}.$ By the Northcott property \ref{eq:Northcott}  the corresponding
number of rational points with exponential height at most $B,$
\begin{equation}
N_{X}(B):=\sharp\left\{ x\in X(\Q):\,\,\text{\ensuremath{H_{\left\Vert \cdot\right\Vert }}(}x)\leq B\right\} ,\,\,\,\text{\ensuremath{H_{\left\Vert \cdot\right\Vert }}(}x):=\exp(\text{\ensuremath{h_{\left\Vert \cdot\right\Vert }}(}x))\label{eq:N X intro}
\end{equation}
is finite for any given real number $B.$ Here $\text{\ensuremath{h_{\left\Vert \cdot\right\Vert }}(}x)$
denotes the height of the scheme over $\Z$ defined by the rational
point $x.$ For example, when $\mathcal{X}$ is a subscheme of $\P_{\Z}^{m}$
and $\mathcal{L}$ is the restriction to $\mathcal{X}$ of $\mathcal{O}(1)\rightarrow\P_{\Z}^{m},$
endowed with the metric induced by a given norm $\left\Vert \cdot\right\Vert $
on $\C^{m+1},$ the height $\text{\ensuremath{H_{\left\Vert \cdot\right\Vert }}(}x)$
may be expressed as
\[
\text{\ensuremath{H_{\left\Vert \cdot\right\Vert }}(}x)=\left\Vert (x_{0},....,x_{m})\right\Vert ,
\]
 when the homogeneous coordinates $x_{i}$ are taken to be integers
with no common divisors (the standard choices for $\left\Vert \cdot\right\Vert $
are the $L^{2}-$norm and the $L^{\infty}-$norms, which correspond
to to the Fubini-Study metric and the Weil metric on $\mathcal{O}(1),$
respectively). Conjecturally, the number $N_{X}(B)$ has almost linear
growth in $B,$ as $B\rightarrow\infty,$ after, perhaps, removing
an appropriate subset $T$ in $X:$ 
\begin{conjecture}
(Manin-Peyre) Assume that $X(\Q)$ is Zariski dense and fix a model
$(\mathcal{X},\mathcal{L})$ for $(X,-K_{X})$ over $\Z$ and a continuous
metric $\left\Vert \cdot\right\Vert $ on $-K_{X}$ with positive
curvature current. Then there exists a thin subset $T$ of $X$ such
that
\[
\lim_{B\rightarrow\infty}\frac{N_{X-T}(B)}{B(\log B)^{r}}=\Theta(\mathcal{L},\left\Vert \cdot\right\Vert )
\]
 for some positive integer $r.$
\end{conjecture}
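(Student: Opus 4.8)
The plan is necessarily a synthesis of known strategies rather than a single argument, since the Manin--Peyre conjecture is open for a general Fano variety; what follows is the template by which it has been verified in all settled cases, together with the identification of the leading constant $\Theta(\mathcal{L},\left\Vert \cdot\right\Vert )$ with Peyre's adelic (Tamagawa) volume. \textbf{Step 1: reduce to a height zeta function.} Set $Z_{X-T}(s):=\sum_{x\in(X-T)(\Q)}H_{\left\Vert \cdot\right\Vert }(x)^{-s}$, which converges for $\mathrm{Re}\,s$ large by the Northcott property \eqref{eq:Northcott}. By a Tauberian theorem of Delange--Ikehara type, the asymptotic $N_{X-T}(B)\sim\Theta\,B(\log B)^{r}$ is equivalent to the assertion that $Z_{X-T}$ admits a meromorphic continuation past the line $\mathrm{Re}\,s=1$ with a unique pole there, of order $r+1$, whose leading Laurent coefficient equals $\Theta/r!$. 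The entire content is then this analytic continuation, together with the location and order of the first pole.

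\textbf{Step 2: effect the continuation.} The available method depends on the geometry of $X$. If $X$ is an equivariant compactification of an algebraic group --- a generalized flag variety, a toric variety, or a compactification of a vector group --- one expands $H_{\left\Vert \cdot\right\Vert }^{-s}$ as a function on the adelic points and applies Poisson summation, respectively the spectral decomposition of $L^{2}$ of the relevant adelic quotient; the poles of $Z$ then come from poles of Eisenstein series, respectively from the combinatorics of the cone of effective divisors, and one reads off $r=\mathrm{rank}\,\mathrm{Pic}(X)-1$ (Franke--Manin--Tschinkel; Batyrev--Tschinkel; Chambert-Loir--Tschinkel). If $X$ is a smooth complete intersection of small degree in sufficiently many variables, the Hardy--Littlewood circle method (Birch) yields directly an asymptotic with a convergent singular series and singular integral, with $r=0$. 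For del Pezzo surfaces and a handful of Fano threefolds one passes to a universal torsor, translates the count into a constrained lattice-point count, and executes it by analytic number theory (Salberger; de la Bret\`eche; Browning; and others). In every instance the removal of a set $T$ is forced --- by accumulating subvarieties (e.g. lines on cubic surfaces contributing the wrong power of $B$) or by the Batyrev--Tschinkel examples where a \emph{thin}, not merely closed, set must be excised --- and pinning down the correct $T$, in the refined sense of Peyre and in light of the geometric obstructions of Lehmann--Sengupta--Tanimoto, is itself part of the problem.

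\textbf{Step 3: identify $\Theta$, and the main obstacle.} After the continuation the leading coefficient factors as a convergent Euler product of local densities $\prod_{v}\tau_{v}(X)$ --- with $\tau_{p}(X)=\lim_{k\to\infty}p^{-kn}\,\#X(\Z/p^{k}\Z)$ and $\tau_{\infty}(X)$ the volume of $X(\R)$ for the measure $\mu_{\left\Vert \cdot\right\Vert }$ induced by the metric --- times the cohomological factors $\alpha^{*}(X)$, the volume of a rational polytope in the dual of the effective cone, and $\beta(X)=\#H^{1}(\mathrm{Gal}(\bar{\Q}/\Q),\mathrm{Pic}(X_{\bar{\Q}}))$; one then checks this product is exactly Peyre's Tamagawa number for the adelic metrization determined by $\left\Vert \cdot\right\Vert $, so that $\Theta(\mathcal{L},\left\Vert \cdot\right\Vert )$ is the advertised constant. (The archimedean factor here is precisely the volume $\mu_{\left\Vert \cdot\right\Vert }(X)$ occurring in the height bound \eqref{eq:conj with mu volume intro}, which is the source of the relations flagged in the introduction.) The main obstacle is that no uniform technique exists: the circle method needs the dimension to grow with the degree, the harmonic-analysis method needs a large group action, and descent methods are essentially ad hoc and have reached dimension two plus a few threefolds. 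A generic Fano --- a general quartic threefold, say, or most K-semistable Fanos of Picard rank one without extra symmetry --- lies beyond all of these, so Step 2 for such $X$ is precisely the open heart of the conjecture; even the qualitative input, choosing $T$ so that it is thin and so that the surviving locus behaves with $r=\mathrm{rank}\,\mathrm{Pic}(X)-1$, is delicate and, at this level of generality, unresolved.
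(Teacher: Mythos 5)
The statement you were asked about is labelled a \emph{Conjecture} in the paper, and the paper offers no proof of it: it is stated as background, with attribution to Manin and Peyre, and the surrounding text (as you yourself note) explicitly treats it as open. So there is no ``paper's own proof'' for your argument to be measured against, and no proof for you to supply either.

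That said, your write-up is an accurate and honest account of the state of the art. You correctly reduce to the meromorphic continuation of the height zeta function $Z_{X-T}(s)$ past $\mathrm{Re}\,s=1$ via a Delange--Ikehara Tauberian theorem; correctly list the three main engines (harmonic analysis on adelic quotients for equivariant compactifications, the circle method for complete intersections of low degree in many variables, and universal torsors for del Pezzo surfaces and isolated threefolds); correctly flag that $T$ must in general be a thin set rather than a subvariety, citing the Batyrev--Tschinkel obstruction that the paper also mentions; and you correctly identify the leading constant with Peyre's Tamagawa-type product of $\alpha^*(X)$, $\beta(X)$, and the local densities $\tau_v$, with the archimedean factor being exactly the volume $\mu_{\|\cdot\|}(X(\R))$ (respectively $X(\C)$) that drives the height-bound comparisons in the rest of the paper. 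You also correctly state $r=\mathrm{rank}\,\mathrm{Pic}(X)-1$. The one thing to be clear-eyed about is that Step 2, as you yourself say, is not a proof but a menu, and for a generic K-semistable Fano of Picard rank one none of the items on the menu apply. So what you have written is a competent survey of why the conjecture is believed and how it has been verified in special cases, not a proof --- which is the correct posture here, since the statement is conjectural and the paper never claims otherwise.
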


In the original conjecture of Manin $T$ was assumed to be a subvariety
(accounting for accumulation of rational points), but then counterexamples
were exhibited \cite{b-t}, motivating the more general formulation
above given in \cite{pey2}. While the power $r$ depends only on
$X_{\Q},$ the leading constant $\Theta(\mathcal{L},\left\Vert \cdot\right\Vert )$
depends on the model $(\mathcal{X},\mathcal{L})$ and the metric $\left\Vert \cdot\right\Vert $.
For example, it follows directly from the definitions that, when the
metric $\left\Vert \cdot\right\Vert $ is scaled, so is $\Theta(\mathcal{L},\left\Vert \cdot\right\Vert ).$
An explicit formula for the leading constant $\Theta(\mathcal{L},\left\Vert \cdot\right\Vert )$
was conjectured by Peyre \cite{pey}. For our purposes it will be
enough to recall that the formula may be expressed as a product of
two factors (assuming that ``weak approximation'' holds):
\begin{equation}
\Theta(\mathcal{L},\left\Vert \cdot\right\Vert )=\eta(\mathcal{L})\mu_{\left\Vert \cdot\right\Vert }(X(\R)),\label{eq:decompos of Theta for Q}
\end{equation}
 where $\mu_{\left\Vert \cdot\right\Vert }(X(\R))$ denotes the volume
of the real points $X(\R)$ with respect to the measure $\mu_{\left\Vert \cdot\right\Vert }$
on $X(\R)$ induced by the metric on $\left\Vert \cdot\right\Vert $
(see Section \ref{subsec:Metrics-on-real part}) and $\eta(\mathcal{L}),$
only depends on the model $(\mathcal{X},\mathcal{L})$ of $(X,-K_{X})$
over $\Z.$

Of course, in general $X$ may not admit any rational points. But
it is expected that for any given Fano variety $X$ defined over $\Q$
there exists a finite field extension $\F$ of $\Q$ such that the
points defined over $\F,$ i.e. the $\F-$points $X(\F),$ are Zariski
dense \cite{b-k-s}. In this case the leading constant $\Theta(\F,\mathcal{L},\left\Vert \cdot\right\Vert )$
in the Manin-Peyre conjecture also involves the volume $\mu_{\left\Vert \cdot\right\Vert }(X(\C))$
of the \emph{complex} points $X(\C)$ with respect to the measure
on $X(\C)$ induced by the metric $\left\Vert \cdot\right\Vert $
(see Section \ref{subsec:Motivations-from-the MP }). 

The relation to Conjecture \ref{conj:height intro} stems from its
reformulation \ref{eq:conj with mu volume intro}, involving, in particular,
the complex contribution $\mu_{\left\Vert \cdot\right\Vert }(X(\C))$
to the leading constant $\Theta(\F,\mathcal{L},\left\Vert \cdot\right\Vert )$
in the Manin-Peyre conjecture (which is the only metric contribution
when $\F$ is totally imaginary, e.g. when $\F=\Q+i\Q).$ In Section
\ref{sec:Bounds-on-the minimal} it will be explained that this bound
implies a variant of an upper bound for the height of the smallest
rational point, proposed by Elsenhans-Jahnel \cite{e-j,e-j0} (suggested
by a heuristic argument involving the Manin-Peyre conjecture):
\[
\inf_{x\in X(\bar{\Q})}H_{\phi}(x)\leq\frac{e^{c_{n}/\text{vol}\ensuremath{(-K_{X})}}}{\mu_{\phi}(X(\C))^{1/2}},
\]
where $H_{\phi}(x)$ denotes the exponential absolute height of an
algebraic point $x$ and $X(\bar{\Q})$ denotes the set of all algebraic
points on $X$ (which is always non-empty). In another direction Conjecture
\ref{conj:height intro} implies universal upper bounds on $\mu_{\left\Vert \cdot\right\Vert }(X(\C)).$
For example, when $\left\Vert \cdot\right\Vert $ is \emph{arithmetically
ample} (as in the case of restrictions of the Weil metric or the Fubini-Study
metric) the corresponding height is non-negative $h_{\left\Vert \cdot\right\Vert }(-\mathcal{K}_{\mathcal{X}})\geq0$
(see Section \ref{subsec:Arithmetic-ampleness}). Thus the inequality
\ref{eq:conj with mu volume intro} then implies that 
\[
\mu_{\left\Vert \cdot\right\Vert }(X(\C))^{1/2}\leq\text{\ensuremath{\exp}}\left(\frac{c_{n}}{(-K_{X})^{n}/n!}\right).
\]
 In view of the decomposition \ref{eq:decompos of Theta for Q} of
Peyre's constant for the field $\Q$ one is naturally led to ask if
the K-semistability of $X$ also implies similar upper bounds on $\mu_{\left\Vert \cdot\right\Vert }(X(\R))?$
Some conjectures in this direction are proposed in Section \ref{subsec:The-general-case}
- including the role of Kähler-Einstein metrics - and the case when
$n=1$ is settled. Relations to uniform bounds on Peyre's complete
constant $\Theta(\F,\mathcal{L},\left\Vert \cdot\right\Vert )$ are
also discussed.

\subsection{Acknowledgments}

I am grateful to Rolf Andreasson, Bo Berndtsson, Sébastien Boucksom,
Dennis Eriksson, Gerard Freixas i Montplet, Yuji Odaka and Per Salberger
for illuminating discussions on the topic of this note. Lacking background
in the area of the Manin-Peyre conjecture I apologize for any omission
in accrediting results properly. This work was supported by a grant
from the Knut and Alice Wallenberg foundation.

\section{Setup and background}

\subsection{Complex geometric setup}

Throughout the paper $X$ will denote a compact connected complex
normal variety, assumed to be $\Q-$Gorenstein. This means that the
canonical divisor $K_{X}$ on $X$ is defined as a $\Q-$line bundle:
there exists some positive integer $m$ and a line bundle on $X$
whose restriction to the regular locus $X_{\text{reg }}$ of $X$
coincides with the $m$:th tensor power of $K_{X_{\text{reg }}},$
i.e. the top exterior power of the cotangent bundle of $X_{\text{reg }}.$
We will use additive notation for tensor powers of line bundles. Recall
that the \emph{volume} of a holomorphic line bundle $L\rightarrow X$
is defined by the Hilbert-Samuel formula
\begin{equation}
\text{vol}(L):=\lim_{k\rightarrow\infty}k^{-n}\dim H^{0}(X,kL),\label{eq:HS formully algebraic}
\end{equation}
 which, when $L$ is ample, coincides with $1/n!$ times the top intersection
power $L^{n}$ (also called the \emph{degree }of $X$ with respect
to $L).$ 

\subsubsection{\label{subsec:Metrics-on-line}Metrics on line bundles}

Let $(X,L)$ be a polarized complex projective variety i.e. a complex
normal variety $X$ endowed with an ample holomorphic line bundle
$L.$ We will use additive notation for metrics on $L.$ This means
that we identify a continuous Hermitian metric $\left\Vert \cdot\right\Vert $
on $L$ with a collection of continuous local functions $\phi_{U}$
associated to a given covering of $X$ by open subsets $U$ and trivializing
holomorphic sections $e_{U}$ of $L\rightarrow U:$ 
\begin{equation}
\phi_{U}:=-\log(\left\Vert e_{U}\right\Vert ^{2}),\label{eq:def of phi U}
\end{equation}
 which defines a function on $U.$ Of course, the functions $\phi_{U}$
on $U$ do not glue to define a global function on $X,$ but the current
\[
dd^{c}\phi_{U}:=\frac{i}{2\pi}\partial\bar{\partial}\phi_{U}
\]
 is globally well-defined and coincides with the normalized curvature
current of $\left\Vert \cdot\right\Vert $ (the normalization ensures
that the corresponding cohomology class represents the first Chern
class $c_{1}(L)$ of $L$ in the integral lattice of $H^{2}(X,\R)).$
Accordingly, as is customary, we will symbolically denote by $\phi$
a given continuous Hermitian metric on $L$ and by $dd^{c}\phi$ its
curvature current. The space of all continuous metrics $\phi$ on
$L$ will be denoted by $\mathcal{C}^{0}(L).$ We will denote by $\mathcal{C}^{0}(L)\cap\text{PSH\ensuremath{(L)}}$the
space of all continuous metrics on $L$ whose curvature current is
positive, $dd^{c}\phi\geq0$ (which means that $\phi_{U}$ is plurisubharmonic,
or psh, for short). Then the exterior powers of $dd^{c}\phi$ are
defined using the local pluripotential theory of Bedford-Taylor \cite{b-b}.
Accordingly, the \emph{volume} of an ample line bundle $L$ may be
expressed as
\begin{equation}
\text{vol}(L)=\frac{1}{n!}L^{n}=\frac{1}{n!}\int_{X}(dd^{c}\phi)^{n}\label{eq:vol in terms of phi}
\end{equation}
 where $\phi$ is any element in $\mathcal{C}^{0}(L)\cap\text{PSH\ensuremath{(L)}}.$

More generally, metrics $\phi$ are defined for a $\Q-$line bundle
$L:$ if $mL$ is a bona fide line bundle, for $m\in\Z_{+},$ then
$m\phi$ is a bona fide metric on $mL.$ 

We also recall that a metric $\phi$ on $L$ is psh iff the function
norm on the dual line bundle $-L$ defines a function on the total
space whose logarithm is psh, as illustrated by the following examples: 
\begin{example}
\label{exa:L p metrics}\emph{($L^{p}-$metrics).} Consider $n-$dimensional
complex projective space $\P^{n}$ with homogeneous coordinates $x_{0},...,x_{n}$
and let $L$ be the hyperplane line bundle $\mathcal{O}(1)\rightarrow\P^{n}.$
Its space $H^{0}(\P^{n},\mathcal{O}(1))$ of global holomorphic sections
is spanned by $x_{0},...,x_{n}$ and the complement of the zero-section
in the dual line bundle $-\mathcal{O}(1)\rightarrow\P^{n}$ may be
identified with $\C^{n+1}-\{0\}$ with coordinates $x_{0},...,x_{n}.$
Accordingly, the standard $L^{p}-$norms on $\C^{n+1}$ induce psh
metrics $\phi_{L^{p}}$ on $\mathcal{O}(1)\rightarrow\P^{n}.$ The
metric corresponding to the $L^{2}-$norms is called the \emph{Fubini-Study
metric} and the metric corresponding to the $L^{\infty}-$norm is
called the \emph{Weil metric$.$ }Concretely, letting $U:=\{x_{0}\neq0\}$
be the standard affine piece of $\P^{n}$ with holomorphic coordinates
$z_{i}:=x_{i}/x_{0}$ the function\emph{ $\phi_{L^{p},U}$ }on\emph{
$U$ }representing $\phi_{L^{p}}$ in the standard trivialization
of $\mathcal{O}(1)\rightarrow U,$ defined by the trivializing section
$x_{0},$ is given by 
\[
\phi_{L^{p},U}(z_{1},...,z_{n}):=\log\frac{\left\Vert (x_{0},...,x_{n})\right\Vert _{L^{p}}^{2}}{|x_{0}|^{2}}=\log\left\Vert (1,z_{1}...,z_{n})\right\Vert _{L^{p}}^{2}
\]
More generally, if $kL\rightarrow X$ is very ample one obtains continuous
psh metrics on $L$ by fixing a bases $s_{1},...,s_{N}$ of $H^{0}(X,kL)$
and pulling back the $L^{p}-$norms on $\mathcal{O}(1)\rightarrow\P_{\C}^{N-1}$
under the corresponding projective embedding 
\[
X\hookrightarrow\P^{N-1},\,\,\,x\mapsto[s_{1}(x):...:s_{N}(x)].
\]
\end{example}

\subsubsection{\label{subsec:Metrics-on- minus KX vs volume}Metrics on $-K_{X}$
vs volume forms on $X$}

First consider the case when $X$ is smooth. Then any smooth metric
$\left\Vert \cdot\right\Vert $ on $-K_{X}$ corresponds to a volume
form on $X,$ defined as follows. Given local holomorphic coordinates
$z$ on $U\subset X$ denote by $e_{U}$ the corresponding trivialization
of $-K_{X},$ i.e. $e_{U}=\partial/\partial z_{1}\wedge\cdots\wedge\partial/\partial z_{n}.$
The metric on $-K_{X}$ induces, as in the previous section, a function
$\phi_{U}$ on $U$ and the volume form in question is locally defined
by
\begin{equation}
\mu_{\phi}:=e^{-\phi_{U}}(\frac{i}{2})^{n^{2}}dz\wedge d\bar{z},\,\,\,\,\,dz:=dz_{1}\wedge\cdots\wedge dz_{n},\label{eq:volume form e minus phi U}
\end{equation}
on $U,$ which glues to define a global volume form on $X.$ When
$X$ is singular any continuous metric $\phi$ on $-K_{X}$ induces
a measure on $X,$ defined as before on the regular locus $X_{\text{reg }}$
of $X$ and then extended by zero to all of $X.$ We will say that
a measure $\mu$ on $X$ is a continuous volume form it it corresponds
to a continuous metric on $-K_{X}.$ A Fano variety has log terminal
singularities (in the sense of birational geometry \cite{ko0}) iff
it admits a continuous volume form with finite total volume \cite[Section 3.1]{bbegz}.

\subsubsection{\label{subsec:Metrics-on-real part}Metrics on $-K_{X}$ vs volume
forms on $X(\R)$ }

Now assume that $X$ is a complex manifold endowed with a real structure,
i.e. an anti-holomorphic involution $\sigma.$ The case that we shall
be interested in here is when $X$ is cut out by homogeneous polynomials
on $\P_{\C}^{m}$ with real coefficients and $\sigma$ is the restriction
to $X$ of complex conjugation on $\P_{\C}^{m}.$ Denote by $X(\R)$
the corresponding fix point locus in $X.$ A metric $\phi$ on $-K_{X}$
(or more precisely, on the restriction of $-K_{X}$ to $X(\R)$ )
induces a volume form on $X(\R)$ that we shall also denote by $\mu_{\phi}$
(abusing notation, slightly) : 
\[
\mu_{\phi}:=e^{-\frac{1}{2}\phi_{U}}dx,\,\,\,\,dx:=dx_{1}\wedge\cdots\wedge dx_{n},
\]
 where we have fixed local holomorphic coordinates $z_{i}$ such that
$\sigma z_{i}=\bar{z}_{i}$ and $x_{i}$ denotes the real part of
$z_{i}.$ Just as in the complex setup it readily follows that $\mu_{\phi}$
define a global volume form on $X(\R).$

\subsection{Kähler-Einstein metrics and K-stability}

\subsubsection{Kähler-Einstein metrics }

A Kähler metric $\omega$ on a complex manifold $X$ is said to be
\emph{Kähler-Einstein} if its has constant Ricci curvature. When $X$
is Fano, i.e. $-K_{X}>0,$ then a Kähler-Einstein metric on $X$ necessarily
has positive Ricci curvature. This means that the the metric $\phi$
on $-K_{X}$ corresponding to the volume form $\omega^{n}/n!$ has
positive curvature. The Kähler-Einstein condition on $\omega$ is
equivalent to the following Monge-Ampère equation for a volume-normalized
psh metric $\phi$ on $-K_{X}:$
\begin{equation}
\frac{1}{\ensuremath{(-K_{X})}^{n}}(dd^{c}\phi)^{n}=\mu_{\phi}\text{ \,\,on \ensuremath{X}.}\label{eq:ke eq}
\end{equation}
By the solution of the Yau-Tian-Donaldson conjecture for Fano manifolds
\cite{c-d-s} a Fano manifold $X$ admits a Kähler-Einstein metric
iff $(X,-K_{X})$ is\emph{ K-polystable. }Very recently the conjecture
was also settled for singular Fano varieties $X$ \cite{li1,l-x-z}
(in the singular case a Kähler-Einstein metric $\phi$ is defined
as a locally bounded psh metric on $-K_{X}$ whose curvature form
defines a smooth Kähler-Einstein metric on the regular locus of $X).$
The proof in the singular case builds on the variational approach
to the Yau-Tian-Donaldson conjecture introduced in \cite{b-b-j},
whose starting point is the fact that the Kähler-Einstein equation
\ref{eq:ke eq} is the critical point equation for the \emph{Ding
functional} $\mathcal{D}_{\phi_{0}}(\phi)$ defined by 
\begin{equation}
\mathcal{D}_{\phi_{0}}(\phi):=-\mathcal{E}_{\phi}(\phi)-(-K_{X})^{n}\log\int_{X}\mu_{\phi},\label{eq:Ding}
\end{equation}
 where $\phi_{0}$ is a fixed reference metric on $-K_{X}$ and 
\begin{equation}
\mathcal{E}_{\phi_{0}}(\phi):=\frac{1}{(n+1)!}\int_{X}(\phi-\phi_{0})\sum_{j=0}^{n}(dd^{c}\phi)^{n-j}\wedge(dd^{c}\phi_{0})^{n-j}.\label{eq:def of functional E}
\end{equation}
 This follows directly from the fact the functional $\mathcal{E}_{\phi_{0}}(\phi)$
is a primitive of the Monge-Ampère operator, i.e its differential
at a given metric $\phi$ is represented by the Monge-Ampère measure:
\begin{equation}
d\mathcal{E}_{\phi_{0}}(\phi)=\frac{(dd^{c}\phi)^{n}}{n!}.\label{eq:diff of functional E}
\end{equation}

\subsubsection{K-stability}

We briefly recall the notion of K-semistability (see the survey \cite{x}
for more background). A polarized complex projective variety $(X,L)$
is said to be \emph{K-semistable} if the Donaldson-Futaki invariant
$\text{DF}(\mathscr{X},\mathscr{L})$ of any test configuration $(\mathscr{X},\mathscr{L})$
for $(X,L)$ is non-negative, $\text{DF}(\mathscr{X},\mathscr{L})\geq0.$
A test configuration $(\mathscr{X},\mathscr{L})$ is defined as a
$\C^{*}-$equivariant normal model for $(X,L)$ over the complex affine
line $\C.$ More precisely, $\mathscr{X}$ is a normal complex variety
endowed with a $\C^{*}-$action $\rho$, a $\C^{*}-$equivariant holomorphic
projection $\pi$ to $\C$ and a relatively ample $\C^{*}-$equivariant
$\Q-$line bundle $\mathscr{L}$ (endowed with a lift of $\rho):$
\begin{equation}
\pi:\mathcal{\mathscr{X}}\rightarrow\C,\,\,\,\,\,\mathscr{L}\rightarrow\mathscr{X},\,\,\,\,\,\,\rho:\,\,\mathscr{X}\times\C^{*}\rightarrow\mathscr{X}\label{eq:def of pi for test c}
\end{equation}
such that the fiber of $\mathscr{X}$ over $1\in\C$ is equal to $(X,L).$
Its \emph{Donaldson-Futaki invariant} $\text{DF}(\mathscr{X},\mathscr{L})\in\R$
may be defined as a normalized limit, as $k\rightarrow\infty,$ of
Chow weights of a sequence of one-parameter subgroups of $GL\left(H^{0}(X,kL)\right)$
induced by $(\mathscr{X},\mathscr{L})$ (in the sense of Geometric
Invariant Theory). Alternatively, according to the intersection-theoretic
formula for $\text{DF}(\mathscr{X},\mathscr{L})$ established in \cite{w,od1}, 

\[
\text{DF}(\mathscr{X},\mathscr{L})=\frac{a}{(n+1)!}\overline{\mathscr{L}}^{n+1}+\frac{1}{n!}\mathscr{K}_{\mathcal{\mathscr{\overline{X}}}/\P^{1}}\cdot\mathcal{\overline{\mathscr{L}}}^{n},\,\,\,\,a=-n(K_{X}\cdot L^{n-1})/L^{n}
\]
 where $\overline{\mathscr{L}}$ denotes the $\C^{*}-$equivariant
extension of $\mathscr{L}$ to the $\C^{*}-$equivariant compactification
$\mathscr{\overline{X}}$ of $\mathscr{X}$ over $\P^{1}$ and $\mathscr{K}_{\mathcal{\mathscr{\overline{X}}}/\P^{1}}$
denotes the relative canonical divisor. A polarized variety $(X,L)$
is said to be \emph{K-polystable }if $\text{DF}(\mathscr{X},\mathscr{L})\geq0$
with equality iff $(\mathscr{X},\mathscr{L})$ is isomorphic to a
product and \emph{K-stable} if equality only holds when  $(\mathscr{X},\mathscr{L})$
is isomorphic to a product in a $\C^{*}-$equivariant way. 
\begin{example}
\label{exa:K-st}Let $X$ be a Fano hypersurface in $\P^{n+1},$ i.e.
$X$ is cut-out by a homogeneous polynomial of degree $d$ at most
$n+1.$When $d\leq2$ $(X,-K_{X})$ is K-polystable (since $X$ is
homogeneous) and for $d\geq3$ it is conjectured that $(X,-K_{X})$
is K-stable. The case $d\geq n$ is established in \cite{x}. Moreover,
for any $d$ the corresponding Fermat hypersurface is K-stable \cite{zhu},
which implies that a generic hypersurface of degree $d$ is K-stable
(see \cite{x}). In the toric case a Fano variety $(X,-K_{X})$ is
K-polystable iff the moment polytope $P$ of $(X,-K_{X})$ has its
barycenter at the origin \cite{ber0}.
\end{example}

\subsection{Arithmetic setup}

\subsubsection{\label{subsec:Models-over}Models over $\Z$}

Let now $X$ be complex projective variety, which is defined over\emph{
$\Q.$ }Concretely, this means that $X$ can be embedded in a projective
space $\P^{m}$ in such a way that $X$ is cut out by a finite number
$r$ of homogeneous polynomials $f_{i}$ with rational coefficients.
Such an embedding determines a projective flat scheme $\mathcal{X}$
over $\Z,$ which is called a \emph{model for $X$ over $\Z.$} The
scheme $\mathcal{X}$ may be defined as\emph{ }$\text{Proj \ensuremath{\left(\Z[x_{0},...,x_{m}]/I\right),}}$
where $I$ denotes the ideal obtained as the intersection of $\Z[x_{0},...,x_{m}]$
with the ideal in $\Q[x_{0},...,x_{m}]$ generated by $f_{1},f_{2},...$\cite[II.2]{ha}.
An embedding of $X$ as above also induces a model $\mathcal{L}$
over $\Z$ of the line bundle $L$ on $X$, defined as the restriction
to $X$ of the $\mathcal{O}(1)-$line bundle on $\P_{\C}^{m}.$ For
any positive integer $k$ we may identify the free $\Z-$module $H^{0}(\mathcal{X},k\mathcal{L})$
of its global sections with a lattice in $H^{0}(X,kL):$
\[
H^{0}(\mathcal{X},k\mathcal{L})\otimes\C=H^{0}(X,kL).
\]
Concretely, for $k$ sufficiently large the lattice in $H^{0}(X,kL)$
consists of restrictions to $X$ of polynomials with integer coefficients.

\subsubsection{Heights}

The height of a a line bundle $\mathcal{L}\rightarrow\mathcal{X}$
over $\Z,$ with respect to a given continuous psh metric $\phi$
on the complexification $L\rightarrow X,$ may be defined by the arithmetic
Hilbert-Samuel formula \cite{Zh0}
\begin{equation}
h_{\phi}(\mathcal{L}):=(n+1)!\lim_{k\rightarrow\infty}\log\text{Vol}\text{\ensuremath{\left\{  s_{k}\in H^{0}(\mathcal{X},k\mathcal{L})\otimes\R:\,\,\,\sup_{X}\left\Vert s_{k}\right\Vert _{\phi}\leq1\right\} } , }\label{eq:def of xhi L infty}
\end{equation}
where $\text{Vol}$ i the unique Haar measure on $H^{0}(\mathcal{X},k\mathcal{L})\otimes\R$
which gives unit-volume to a fundamental domain of the lattice $H^{0}(\mathcal{X},k\mathcal{L}).$ 
\begin{rem}
Originally, the height was defined in \cite{fa} using arithmetic
intersection theory \cite{g-s,g-s2}, in the context of Arakelov geometry,
as the top arithmetic intersection number of the metrized line bundle
$(\mathcal{L},\phi)$ on the $(n+1)-$dimensional scheme $\mathcal{X}:$
\[
h_{\phi}(\mathcal{L}):=(\mathcal{L},\phi)^{n+1},
\]
 The metric $\left\Vert \cdot\right\Vert $ on $L\rightarrow X$ thus
plays the role of a ``compactification'' of $\mathcal{X}.$ But
for the purpose of the present note it will be convenient to adopt
the direct definition \ref{eq:def of xhi L infty}.
\end{rem}

More generally, the height is naturally defined for $\Q-$line bundles,
since it is homogeneous with respect to tensor products$:$ 
\begin{equation}
h_{\phi}(m\mathcal{L})=m^{n+1}h_{\phi}(\mathcal{L}),\,\,\,\text{if \ensuremath{m\in\Z_{+}}}\label{eq:vol chi tensor}
\end{equation}
Moreover, the \emph{normalized height }
\[
\hat{h}_{\phi}(\mathcal{L}):=\frac{h_{\phi}(\mathcal{L})}{(n+1)!\text{vol \ensuremath{(L)}}}
\]
\emph{ }is additively equivariant with respect to scalings of the
metric: 
\begin{equation}
\hat{h}_{\phi+\lambda}(\mathcal{L})=\hat{h}_{\phi}(\mathcal{L})+\frac{\lambda}{2},\,\,\,\text{if }\lambda\in\R,\label{eq:scaling of chi vol}
\end{equation}
 as follows directly from the definition, using the classical Hilbert-Samuel
formula$.$ We also recall that, given two continuous psh metrics
$\phi$ and $\phi_{0}$ on the complexification $L\rightarrow X$
of $\mathcal{L}\rightarrow\mathcal{X},$ the following ``change of
metrics formula'' holds:
\begin{equation}
h_{\phi}(\mathcal{L})-h_{\phi_{0}}(\mathcal{L})=\frac{(n+1)!}{2}\mathcal{E}_{\phi_{0}}(\phi),\label{eq:change of metric formula for chi vol}
\end{equation}
 where $\mathcal{E}_{\phi_{0}}(\phi)$ is the primitive of the Monge-Ampère
measure defined by formula \ref{eq:diff of functional E} (this is
a special case of \cite[Thm A]{b-b}). 
\begin{rem}
By formula \ref{eq:change of metric formula for chi vol} the functional
$\phi\mapsto h_{\phi}(\mathcal{L})$ is also a primitive of Monge-Ampère
operator, for any given model $(\mathcal{X},\mathcal{L})$ of $(X,L)$
over $\Z.$ Thus, loosely speaking, the model over $\Z$ plays the
role of the reference metric $\phi_{0}.$ 
\end{rem}

\begin{example}
\emph{\label{exa:(height-of-weil rational}(height of a rational point).}
Let $x$ be a rational point on $X,$ i.e. point in $X(\Q).$ A fixed
model $(\mathcal{X},\mathcal{L}$) over $\Z$ induces, by restriction,
a model over $\Z$ for the the point $x$ and it follows directly
from the definition \ref{eq:def of xhi L infty} that
\[
h_{\phi}(x):=-\log\left\Vert s_{x}\right\Vert _{\phi},
\]
where $s_{x}$ is a generator of the free rank one $\Z-$module in
the complex line $L_{|x}$ induced by $(\mathcal{X},\mathcal{L}).$
Concretely, if $(\mathcal{X},\mathcal{L}$) is the model over $\Z$
induced by a projective embedding of $X,$ then, representing $x=[x_{0}:....:x_{m}]\in\P^{m}$
for $x_{i}$ integers with no common divisors we can express
\[
h_{\phi}(x)=\log\left\Vert (x_{0},...,x_{m})\right\Vert ,
\]
where now $\left\Vert \cdot\right\Vert $ denotes the induced metric
on the dual line bundle $-L$ (using that, away from the zero-section,
$-L$ may be identified with a subvariety of $\C^{m+1}-\{0\}).$ In
the case when $\left\Vert \cdot\right\Vert $ is the restriction to
$X$ of the \emph{Weil metric} on $\P^{m}$ (see Example \ref{exa:L p metrics})
the corresponding height on points is called the ``naive height''
(see Example \ref{exa:(Naive-heights).-Let}). Note that ``naive''
height of $H_{\phi}(x)$ is a measure of the information-theoretic
complexity of $x.$ For example, $h_{\phi}(1:\frac{1}{2})=2,$ but
$h_{\phi}(1:\frac{1000}{1999})=1999,$ although the points $(1:\frac{1}{2})$
and $(1:\frac{1000}{1999})$ on $\P^{1}$ are close to each other
in the metric sense. Moreover, in general, $h_{\phi}(1:1/m)=h_{\phi}(1:m)$
for any integer $m.$
\end{example}

In the case of projective space the height may be analytically expressed
by the following well-known formula:
\begin{equation}
2h_{\phi}(\P_{\Z}^{n},\mathcal{O}(1))=(n+1)!\mathcal{E}_{\P^{n+1}}(\phi,\phi_{0}),\label{eq:height on Pn}
\end{equation}
 where $\mathcal{E}_{\P^{n+1}}$ denotes the functional defined by
formula \ref{eq:def of functional E} corresponding to $\mathcal{O}(1)\rightarrow\P^{n}$
and $\phi_{0}$ is the Weil metric on $\mathcal{O}(1)$ (for example,
this is a special case of the toric formula in \cite[formula 3.7]{a-b0}).
More generally, the height of a hypersurface in $\P^{n+1}$ of degree
$d$ may be analytically expressed using the restriction formula for
the height \cite[Prop 2.3.1]{b-g-s}:
\begin{example}
\emph{\label{exa:(height-of-a}(height of a hypersurface)} Let $\mathcal{X}$
be the subscheme of $\P_{\Z}^{n+1}$ cut out by a homogeneous polynomial
$s$ of degree $d$ with integer coefficients and $\phi$ a continuous
psh metric on $\mathcal{O}(d)\rightarrow\P_{\C}^{n+1}.$ Then the
height $h_{\phi}(\mathcal{X}_{d},\mathcal{O}(d))$ of the restriction
of $(\mathcal{O}(d),\phi)$ to $\mathcal{X}$ may be expressed as
\[
\frac{2h_{\phi}(\mathcal{X}_{d},\mathcal{O}(d))}{(n+1)!}=(n+2)\mathcal{E}_{\P^{n+1}}(\phi,d\phi_{0})+\int_{\P^{n+1}}\log\left(\left\Vert s\right\Vert _{\phi}^{2}\right)\frac{(dd^{c}\phi)^{n+1}}{(n+1)!},
\]
 where $\phi_{0}$ is the Weil metric on $\mathcal{O}(1)$ and $\mathcal{E}_{\P^{n+1}}$
denotes the functional defined by formula \ref{eq:def of functional E},
corresponding to $\mathcal{O}(d)\rightarrow\P^{n+1}.$

In particular, when $\phi$ is the metric on $\mathcal{O}(d)$ induced
from the Weil metric on $\P^{n+1}$ the height $h_{\phi}(\mathcal{X}_{d},\mathcal{O}(d))$
is given by the\emph{ Mahler measure} \cite{mah2} of $s:$
\[
h_{\phi}(\mathcal{X}_{d},\mathcal{O}(d))=\int_{T^{n+1}}\log\left|s\right|d\theta/(2\pi)^{n},
\]
 where $T^{n+1}$ is the unit-torus in the affine piece of $\P^{n+1}$
endowed with its Haar measure $d\theta/(2\pi)^{n}.$ Denoting by $a_{i}$
the coefficients in $\Z$ of $s$ it thus follows from the estimates
in \cite{mah2} that 
\begin{equation}
\left|h_{\phi}(\mathcal{X}_{d},\mathcal{O}(d))-\max_{i}|\text{\ensuremath{a_{i}}}|\right|\leq b(n,d)\label{eq:bounds on height of Weil}
\end{equation}
 for an (explicit) constant $b(n,d)$ only depending on $n$ and $d.$ 
\end{example}

\subsubsection{\label{subsec:Arithmetic-ampleness}Arithmetic ampleness}

A metrized line bundle $(\mathcal{L},\phi)$ is said to be \emph{arithmetically
effective} \cite{Zh0} if, for $k$ sufficiently large, there exists
a basis $s_{1},...,s_{N_{k}}$ in $H^{0}(\mathcal{X},k\mathcal{L})$
satisfying $\left\Vert s_{i}(x)\right\Vert _{\phi}\leq1$ at all points
$x$ of $X.$ It then follows directly from the definition \ref{eq:def of xhi L infty}
that 
\[
h_{\phi}(\mathcal{L})\geq0.
\]
 For example, $(\mathcal{L},\phi)$ is arithmetically ample when $\mathcal{L}$
is the restriction to $\mathcal{X}$ of $\mathcal{O}(1)\rightarrow\P_{\Z}^{m}$
and $\phi$ is the restriction of either the Weil metric or the Fubini-Study
metric. 

\section{\label{sec:K-semistability-and-height}K-semistability and height
bounds }

In this section we recall some of the results in \cite{a-b0,a-b},
linking K-semistability to height bounds (see also \cite{o} for other
relations between K-semistability and heights for general polarized
manifolds $(X,L))$. The starting point  is the following result (see
\cite[Thm 2.4]{a-b} for a more general formulation):
\begin{thm}
\label{thm:arithm Vol and K semi st}Let $X$ be a Fano variety and
fix a relatively ample model $(\mathcal{X},\mathcal{L})$ for $(X,-K_{X})$
over $\Z.$ Then the following is equivalent:
\begin{itemize}
\item $(X,-K_{X})$ is K-semistable
\item The supremum of the height $h_{\phi}(\mathcal{L})$ over all volume-normalized
continuous psh metrics $\phi$ on $-K_{X}$ is finite.
\end{itemize}
\end{thm}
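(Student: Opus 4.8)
The plan is to relate the height functional $\phi \mapsto h_\phi(\mathcal{L})$ to the Ding functional $\mathcal{D}_{\phi_0}$, using the change-of-metrics formula \eqref{eq:change of metric formula for chi vol}, and then to invoke the known equivalence between K-semistability and boundedness (from above) of the Ding functional. Concretely, fix a reference psh metric $\phi_0$ on $-K_X$ coming from the model $(\mathcal{X},\mathcal{L})$, and recall that for any volume-normalized continuous psh metric $\phi$ the normalization means $\int_X \mu_\phi = 1$, so the $\log\int_X \mu_\phi$ term in the Ding functional \eqref{eq:Ding} vanishes. Then \eqref{eq:change of metric formula for chi vol} reads $h_\phi(\mathcal{L}) = h_{\phi_0}(\mathcal{L}) + \tfrac{(n+1)!}{2}\mathcal{E}_{\phi_0}(\phi)$, and since $\mathcal{E}_{\phi_0}(\phi) = -\mathcal{D}_{\phi_0}(\phi)$ on volume-normalized metrics (up to the additive constant $\mathcal{E}_{\phi_0}(\phi_0)=0$ and the $(-K_X)^n$ factor being absorbed appropriately), finiteness of $\sup_\phi h_\phi(\mathcal{L})$ is equivalent to the Ding functional being bounded below on volume-normalized psh metrics, i.e.\ to \emph{Ding-semistability}.

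The second ingredient is the theorem — due to Berman, and in the stated generality to Berman--Boucksom--Jonsson and Li--Xu--Zhuang — that for a Fano variety $X$, Ding-semistability is equivalent to K-semistability. One direction (K-semistable $\Rightarrow$ Ding-semistable, equivalently coercivity/boundedness of the Ding functional along the relevant filtrations) follows from the valuative criterion for K-semistability together with the fact that the Ding invariant of a test configuration dominates the Donaldson--Futaki invariant; the other direction follows since any test configuration produces a geodesic ray along which the Ding functional has slope the Ding invariant, which must then be $\geq 0$ if the functional is bounded below. I would cite \cite{b-b-j} (and \cite{li1,l-x-z} for the singular case) for this equivalence rather than reproving it.

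The one genuinely model-dependent point that needs care is that the supremum is taken over \emph{all} volume-normalized continuous psh metrics, not merely over those arising from test configurations or from smooth data, and that the height $h_\phi(\mathcal{L})$ is defined via the arithmetic Hilbert--Samuel formula \eqref{eq:def of xhi L infty} rather than directly as a value of $\mathcal{E}$. Here the change-of-metrics formula \eqref{eq:change of metric formula for chi vol} — valid for arbitrary pairs of continuous psh metrics by \cite[Thm A]{b-b} — is exactly what bridges the gap: it reduces the arithmetic quantity to the differential-geometric energy $\mathcal{E}_{\phi_0}$, after which the supremum over continuous psh metrics of $-\mathcal{E}_{\phi_0}(\phi)$ (equivalently, the infimum of the Ding functional) is a purely complex-analytic object whose finiteness is governed by the Ding/K-semistability dictionary. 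The main obstacle, then, is not any single hard estimate but rather correctly matching normalizations: the factor $(-K_X)^n$ versus $(n+1)!\,\mathrm{vol}(-K_X)$, the volume-normalization constraint turning the Ding functional into $-\mathcal{E}$, and ensuring that ``boundedness of the Ding functional on the space of \emph{all} continuous psh metrics'' coincides with the standard formulations over test configurations or over the finite-energy class $\mathcal{E}^1$. Once these bookkeeping points are pinned down, the equivalence is immediate from the cited results.
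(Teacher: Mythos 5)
Your approach is essentially the one the paper takes: use the change-of-metrics formula \eqref{eq:change of metric formula for chi vol} to identify $\phi\mapsto h_\phi(\mathcal{L})$ with (a constant plus) $\tfrac{(n+1)!}{2}\mathcal{E}_{\phi_0}(\phi)$, observe that on volume-normalized metrics the height therefore matches $-\tfrac{(n+1)!}{2}\mathcal{D}_{\phi_0}(\phi)$ up to a constant, and then invoke the equivalence ``Ding functional bounded below $\iff$ K-semistable.'' Two small corrections to your sketch of the cited input: the primary analytic reference the paper relies on is Li's K-semistable YTD paper \cite{li} (extended to the singular case in \cite{a-b0} via \cite{li1,l-x-z}), not \cite{b-b-j} per se; and you have the Ding/Donaldson--Futaki comparison backwards — it is $\mathrm{DF}(\mathscr{X},\mathscr{L})\geq\mathrm{Ding}(\mathscr{X},\mathscr{L})$ (with equality for special test configurations), so it is Ding-semistability that trivially implies K-semistability via that inequality, while the converse (the direction actually needed to conclude lower boundedness of the Ding functional from K-semistability) is the nontrivial content of \cite{li} (building on \cite{c-d-s}) and \cite{ber0}.
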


\begin{proof}
\emph{(sketch). }In the case when $X$ is non-singular the proof simply
amounts to an arithmetic reformulation of the main result in \cite{li}.
Indeed, by the scaling relation \ref{eq:scaling of chi vol}, the
finiteness in the second point of the previous theorem is equivalent
to the lower boundedness of the functional 
\begin{equation}
\mathcal{D}_{\Z}(\phi):=-2h_{\phi}(\mathcal{L})/(n+1)!-\text{vol\ensuremath{(-K_{X})}}\log\int_{X}\mu_{\phi}.\label{eq:arithm Ding}
\end{equation}
This functional coincides, up to an additive constant, with the \emph{Ding
functional }\ref{eq:Ding} (as follows directly from formula \ref{eq:change of metric formula for chi vol}).
As shown in \cite{li} $(X,-K_{X})$ is K-semistable iff the Ding
functional is bounded from below. The ``only if'' direction was
shown in \cite{ber0} ( the general singular setup). The starting
point of the proof of the ``if direction'' in\cite{li}, is the
solution of the Yau-Tian-Donaldson conjecture for Fano manifolds in
\cite{c-d-s}, which implies that that $(X,-K_{X}$) is K-polystable
iff the Ding functional admits a minimizer, namely a Kähler-Einstein
metric. However, since $(X,-K_{X})$ is merely assumed K-semistable
the Kähler-Einstein metric lives on a deformation $X_{0}$ of $X$
(defined as the central fiber of a special test configuration with
vanishing Donaldson-Futaki invariant). As shown in \cite{li} this
is enough to conclude that that $\mathcal{D}(\phi)$ is bounded from
below (although its infimum is not attained, unless $(X,-K_{X})$
is K-polystable). In \cite{a-b0} this result is extended to singular
Fano varieties, using, in particular, the solution of the Yau-Tian-Donaldson
conjecture for singular Fano varieties \cite{li1,l-x-z}. 
\end{proof}
One virtue of the arithmetic setup is that the arithmetic Ding functional
$\mathcal{D}_{\Z}$ in formula \ref{eq:arithm Ding} is independent
of the choice of a reference metric. Indeed, it only depend on the
choice of a model $(\mathcal{X},\mathcal{L})$ of $(X,-K_{X})$ over
$\Z.$ In particular, if $\mathcal{L}$ is the relative anti-canonical
line bundle $-\mathcal{K}_{\mathcal{X}}$ of $\mathcal{X}$, then
the infimum of the arithmetic Ding functional is a real number only
depending on the integral model $\mathcal{X}$ of $X$ over $\Z.$
Conjecture \ref{conj:height intro} may thus be reformulated as the
following
\begin{conjecture}
\label{conj:height not intro}Let $\mathcal{X}$ be a projective scheme
over $\Z$ of relative dimension $n$ whose relative anti-canonical
divisor defines a relatively ample $\Q-$line bundle $-\mathcal{K}_{\mathcal{X}}.$
Assume that $(X,-K_{X})$ is K-semistable. Then the following numerical
invariant of $\mathcal{X}$ 
\[
\sup_{\phi}\left(h_{\phi}(-\mathcal{K}_{\mathcal{X}})/(n+1)!+\frac{1}{2}\text{vol\ensuremath{(-K_{X})}}\log\int_{X}\mu_{\phi}\right),
\]
 (where $\phi$ ranges over all continuous psh metrics on $-K_{X}$)
is maximal when $\mathcal{X}=\P_{\Z}^{n}.$ Moreover, if $\mathcal{X}$
is assumed normal $\P_{\Z}^{n}$ is the only maximizer of the invariant
above (and the sup over $\phi$ is then attained precisely when $\phi$
is a Kähler-Einstein metric).
\end{conjecture}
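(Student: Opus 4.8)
The plan is to reduce Conjecture \ref{conj:height not intro} to Conjecture \ref{conj:height intro} by tracking the effect of the volume-normalization on the height. First I would observe that the two conjectures are \emph{literally the same statement}, only with the volume-normalization constraint moved from the metric into the functional. Concretely, start with an arbitrary continuous psh metric $\phi$ on $-K_X$, set $\lambda := -\log\int_X\mu_\phi$, and replace $\phi$ by the volume-normalized metric $\phi':=\phi+\lambda$. By formula \ref{eq:scaling of chi vol} applied to $\mathcal{L}=-\mathcal{K}_{\mathcal{X}}$ (so $\text{vol}(L)=\text{vol}(-K_X)=(-K_X)^n/n!$), one has
\begin{equation}
\frac{h_{\phi'}(-\mathcal{K}_{\mathcal{X}})}{(n+1)!\,\text{vol}(-K_X)}=\frac{h_{\phi}(-\mathcal{K}_{\mathcal{X}})}{(n+1)!\,\text{vol}(-K_X)}+\frac{\lambda}{2},
\end{equation}
and since $\mu_{\phi'}(X)=e^{-\lambda}\int_X\mu_\phi\cdot e^{\lambda}=\dots$ — more carefully, $\mu_{\phi+\lambda}=e^{-\lambda}\mu_\phi$ by \ref{eq:volume form e minus phi U}, so $\int_X\mu_{\phi'}=1$ as desired. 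Rearranging, $h_\phi(-\mathcal{K}_{\mathcal{X}})/(n+1)! + \tfrac12\text{vol}(-K_X)\log\int_X\mu_\phi$ equals $h_{\phi'}(-\mathcal{K}_{\mathcal{X}})/(n+1)!$, which is independent of the chosen representative in the scaling orbit. Hence the supremum over all $\phi$ of the displayed invariant equals the supremum over volume-normalized $\phi$ of $h_\phi(-\mathcal{K}_{\mathcal{X}})/(n+1)!$.

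Next I would record that for $\mathcal{X}=\P^n_{\Z}$ the volume-normalized Fubini-Study metric $\phi_{\text{FS}}$ is a critical point of the Ding functional \ref{eq:Ding}, hence of $\phi\mapsto h_\phi(-\mathcal{K}_{\mathcal{X}})$ restricted to the volume-normalized slice (this uses \ref{eq:change of metric formula for chi vol}, \ref{eq:diff of functional E} and the K\"ahler-Einstein equation \ref{eq:ke eq}). Because the Ding functional is geodesically convex on the space of psh metrics — and strictly so modulo scaling and automorphisms — this critical point is in fact the global maximizer of the invariant for $\P^n_{\Z}$, so $\sup_\phi$ for $\P^n_{\Z}$ is attained exactly at the K\"ahler-Einstein (Fubini-Study) metric, as the conjecture asserts for the maximizer. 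Then the inequality
\begin{equation}
\sup_\phi\left(\frac{h_\phi(-\mathcal{K}_{\mathcal{X}})}{(n+1)!}+\frac12\text{vol}(-K_X)\log\int_X\mu_\phi\right)\le \frac{h_{\phi_{\text{FS}}}(-\mathcal{K}_{\P^n_{\Z}})}{(n+1)!}=\frac{c_n}{(n+1)^{n+1}}
\end{equation}
is exactly Conjecture \ref{conj:height intro} after unwinding the explicit value of the right-hand side via the Gillet--Soul\'e formula \cite[\S5.4]{g-s}, i.e. it is \ref{eq:conj with mu volume intro}. The equality case — that a \emph{normal} $\mathcal{X}$ achieving the bound must be $\P^n_{\Z}$ — is likewise the equality clause of Conjecture \ref{conj:height intro}, transported through the same scaling identity.

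The main obstacle is that this is only a \emph{reformulation}, not a proof: Conjecture \ref{conj:height not intro} is equivalent to Conjecture \ref{conj:height intro}, and neither is established in full generality in the excerpt (only for certain families, per the remark after Conjecture \ref{conj:height intro}). So the ``proof'' of the final statement should be understood as verifying the equivalence of the two formulations, which is what the paragraph preceding it ("Conjecture \ref{conj:height intro} may thus be reformulated as the following") already announces. Concretely I would: (i) prove the scaling identity above showing the displayed invariant is scaling-invariant and equals the volume-normalized height; (ii) invoke Theorem \ref{thm:arithm Vol and K semi st} to note the sup is finite precisely because $(X,-K_X)$ is K-semistable, so the invariant is a well-defined real number; (iii) identify $h_{\phi_{\text{FS}}}(-\mathcal{K}_{\P^n_{\Z}})$ with the explicit constant $c_n$ of \ref{eq:expl formul on p n} via \ref{eq:height on Pn} and the Gillet--Soul\'e computation; and (iv) match the equality statement using the characterization of the Ding maximizer as the K\"ahler-Einstein metric together with Liu's uniqueness result \cite{liu} identifying $\P^n$ as the unique K-semistable Fano of maximal degree. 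The genuinely hard mathematical content — why K-semistability forces the height bound — is \emph{not} re-proved here; it is the open Conjecture \ref{conj:height intro} itself.
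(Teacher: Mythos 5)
Your reading is correct: the paper does not prove this statement as a theorem but presents it, after the scaling discussion around \ref{eq:conj with mu volume intro}, as a reformulation of Conjecture~\ref{conj:height intro}, and your plan of demonstrating the equivalence via the scaling law \ref{eq:scaling of chi vol} is exactly the paper's implicit argument. Two small slips should be corrected, though. First, the sign of $\lambda$ is wrong: since $\mu_{\phi+\lambda}=e^{-\lambda}\mu_\phi$, to make $\phi'=\phi+\lambda$ volume-normalized you must take $\lambda=+\log\int_X\mu_\phi$, not $-\log\int_X\mu_\phi$; with your choice $\int_X\mu_{\phi'}=(\int_X\mu_\phi)^2\neq1$. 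Second, since $c_n$ in \ref{eq:expl formul on p n} is already the value of $h_{\phi_{\mathrm{FS}}}(-\mathcal{K}_{\P^n_\Z})$ for the volume-normalized metric, the invariant at $\P^n_\Z$ is $c_n/(n+1)!$, not $c_n/(n+1)^{n+1}$. Finally, invoking Liu's result \cite{liu} in step (iv) is a detour: Liu's theorem concerns uniqueness in the classical degree bound \ref{eq:fujita intro}, whereas here the uniqueness clause of Conjecture~\ref{conj:height not intro} is carried directly by the (conjectural) equality clause of Conjecture~\ref{conj:height intro} under the same scaling identification, so no external result is needed or available.
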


We will say that a \emph{weak version} of the previous conjecture
holds if there exists a constant $C_{n},$ only depending the dimension
$n,$ such that 
\[
\frac{h_{\phi}(-\mathcal{K}_{\mathcal{X}})}{(n+1)!}+\frac{\text{vol}\ensuremath{(-K_{X})}}{2}\log\int_{X}\mu_{\phi}\leq C_{n}
\]
(see \cite{na} for a similar conjecture).
\begin{rem}
The invariant in Conjecture \ref{conj:height not intro} is reminiscent
of Bost and Zhang's intrinsic height \cite{bo1,bo2,zh1}, which, in
particular, applies to $(\mathcal{X},-k\mathcal{K}_{\mathcal{X}}),$
when $-k\mathcal{K}_{\mathcal{X}}$ is very ample. The intrinstic
height of $(\mathcal{X},-k\mathcal{K}_{\mathcal{X}})$ is finite iff
$(\mathcal{X},-k\mathcal{K}_{\mathcal{X}})$ is Chow semistable. However,
it is defined as an \emph{infimum} involving $h_{\phi}(-\mathcal{K}_{\mathcal{X}})$
(where $\log\int_{X}\mu_{\phi}$ is replaced by another term) and
thus yield\emph{ lower} bounds on the heights $h_{\phi}(-k\mathcal{K}_{\mathcal{X}})$
(see the discussion in \cite[Section 6.3]{a-b0}). 
\end{rem}

\subsection{The toric case}

Let let $X$ be a toric Fano variety. The main result of \cite{a-b0}
establishes, in particular, the previous conjecture for the canonical
integral model $\mathcal{X}$ of $X$ over $\Z$ when $n\leq6.$ To
explain this result denote by $P$ the moment polytope in $\R^{n}$
of the polarized toric variety $(X,-K_{X}).$ The canonical integral
model $\mathcal{X}$ may be concretely described as follows. Denote
by $m_{1},...,m_{N}$ the vertices of the scaled polytope $kP$ for
a sufficiently large integer $k.$ Then $X$ may be identified with
the Zariski closure in $\P^{N-1}$ of the image of the multinomial
map 
\begin{equation}
\C^{*n}\rightarrow\P^{N-1},\,\,\,z\mapsto[z^{m_{1}}:z^{m_{2}}:....:z^{m_{N}}],\label{eq:monomial em}
\end{equation}
The image is defined over $\Q$ and thus determines an integral model
$\mathcal{X}$ of $X.$ Concretely, the defining polynomials may,
in this toric case, be taken to be \emph{binomials}. More precisely,
the homogeneous polynomials $f$ cutting out $\mathcal{X}$ may be
taken to be of the form 
\[
f=x^{\alpha_{+}}-x^{\alpha_{-}},\,\,\,\,\alpha=(a_{1},...,a_{N})\in\Z^{N}:\,\,\,\sum_{i=1}^{N}a_{i}m_{i}=0
\]
(see \cite{c-l-s}). Moreover, $-\mathcal{K}_{\mathcal{X}}$ is defined
as a relatively ample $\Q-$line bundle ($-k\mathcal{K}_{\mathcal{X}}$
is isomorphic to the restriction of $\mathcal{O}(1)\rightarrow\P_{\Z}^{N-1}$
to $\mathcal{X}$ under the embedding \ref{eq:monomial em}; see \cite{a-b0}).
\begin{thm}
\label{thm:main toric intro}Let $X$ be an $n-$dimensional K-semistable
toric Fano variety and denote by $\mathcal{X}$ the canonical model
of $X$ over $\Z.$ Then Conjecture \ref{conj:height not intro} holds
if $n\leq6$ and $X$ is $\Q-$factorial (equivalently, $X$ is non-singular
or has abelian quotient singularities). 
\end{thm}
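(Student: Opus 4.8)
The plan is to reduce the conjectured height inequality to an explicit inequality about lattice polytopes, and then verify that inequality in dimensions $n \le 6$ under the $\Q$-factoriality hypothesis. First I would use Theorem \ref{thm:arithm Vol and K semi st} together with the change-of-metric formula \ref{eq:change of metric formula for chi vol} to observe that the supremum over $\phi$ of the invariant in Conjecture \ref{conj:height not intro} equals $-\tfrac12 \operatorname{vol}(-K_X)$ times the infimum of the arithmetic Ding functional $\mathcal{D}_{\Z}$, and that this infimum is finite precisely because $X$ is K-semistable. The essential simplification in the toric setting is that this toric arithmetic Ding infimum admits an \emph{explicit} evaluation: restricting attention to torus-invariant metrics $\phi$ (which suffices, by a convexity/equivariance argument, to compute the infimum of $\mathcal{D}_{\Z}$ over all $\phi$), the functional $\mathcal{E}_{\phi_0}(\phi)$ becomes a Legendre-duality integral over the moment polytope $P$, and the height contribution coming from the canonical integral model $\mathcal{X}$ — the one built from the binomial equations attached to the lattice points of $kP$ — can be written in closed form in terms of $P$. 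This is exactly the toric height formula of \cite[formula 3.7]{a-b0} referenced after \ref{eq:height on Pn}; I would invoke it to turn the left-hand side of Conjecture \ref{conj:height not intro} into a concrete function of the polytope $P$ alone.

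Next I would make the normalization explicit. Since $X$ is Fano, $P$ is (a translate of) the polar dual of a reflexive-type polytope with a distinguished interior lattice point, and K-semistability of $(X,-K_X)$ is equivalent, by Example \ref{exa:K-st}, to the barycenter of $P$ lying at that interior point (the origin). So the theorem becomes: among all $n$-dimensional lattice polytopes $P$ with $0$ as an interior point, barycenter at $0$, and the appropriate reflexivity/$\Q$-factoriality constraints, the explicit polytope functional is maximized by the standard simplex $\Delta_n$ corresponding to $\P^n$, with equality only there. I would then set up the comparison against $\P_{\Z}^n$ using the explicit constant $c_n$ from \ref{eq:expl formul on p n} (equivalently \ref{eq:height on Pn}), so that the claim is an inequality $F(P) \le F(\Delta_n)$ for this functional $F$.

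The heart of the proof is verifying $F(P) \le F(\Delta_n)$ for $n \le 6$. Here I would combine two ingredients: (i) a general estimate bounding $F(P)$ in terms of elementary geometric quantities of $P$ — volume, number of vertices/lattice points of $kP$, and the barycenter condition — valid in all dimensions; and (ii) the \emph{finite classification} of $\Q$-factorial (equivalently, simplicial-fan, i.e. smooth or abelian-quotient-singular) reflexive polytopes in dimensions up to $6$, which is available from the database work on toric Fano varieties. For the finitely many polytopes not handled by the general estimate (ii), one checks $F(P) < F(\Delta_n)$ by direct computation. The point of the $\Q$-factorial hypothesis is precisely that it restricts to a classifiable (finite) list in each dimension $\le 6$ and excludes the pathological toric examples with arbitrarily large degree mentioned in the introduction; the point of the dimension bound $n \le 6$ is that the general estimate (i) alone does not close the gap, so one genuinely needs the classification, which is only complete in low dimensions.

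The main obstacle I anticipate is step (i): producing a clean enough a priori upper bound on the toric height functional $F(P)$ so that only a short, explicitly enumerable list of borderline polytopes survives and must be checked by hand against $F(\Delta_n)$. This requires controlling the interaction between the Legendre-transform (entropy-type) term coming from $\log\int_X \mu_\phi$ at the optimal $\phi$ and the combinatorial term counting lattice points of $kP$, and showing that the barycenter-at-origin constraint forces enough symmetry to beat the simplex only for $\Delta_n$ itself. Establishing equality characterization — that $\P_{\Z}^n$ is the unique maximizer among normal models — additionally needs the rigidity statement that equality in the polytope inequality forces $P = \Delta_n$, which should follow from a strict-convexity analysis of $F$ combined with the equality case of the underlying Fujita-type degree bound \ref{eq:fujita intro} for K-semistable toric varieties.
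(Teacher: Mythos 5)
Your high-level reduction---use the toric height formula to turn Conjecture \ref{conj:height not intro} into an inequality $F(P)\le F(\Delta_n)$ for a polytope functional, with the barycenter-at-the-origin condition encoding K-semistability---is exactly the starting point of the paper's proof. But the decisive ingredient in the paper is different from what you propose in step~(i), and your step~(ii) would not run as stated. The paper first proves the universal estimate (formula \ref{eq:universal bound intro})
\[
\frac{h_{\phi}(-\mathcal{K}_{\mathcal{X}})}{(n+1)!}\leq-\frac{1}{2}\text{vol}(-K_{X})\log\Bigl(\frac{\text{vol}(-K_{X})}{(2\pi^{2})^{n}}\Bigr),
\]
valid for every K-semistable toric Fano $X$ and every volume-normalized $\phi$, with the crucial feature that the right-hand side depends on $P$ \emph{only through} $\text{vol}(-K_X)$, not through lattice-point counts or vertex data. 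The right-hand side is monotone in the volume on the relevant range, so combined with Fujita's bound \ref{eq:fujita intro} this already proves the weak form in every dimension. The sharp form is then reduced not to a polytope-by-polytope comparison but to a pure \emph{degree} comparison: the ``gap hypothesis'' that $\P^{n-1}\times\P^1$ realizes the second-largest value of $(-K_X)^n$ among $n$-dimensional K-semistable toric Fanos. Plugging that second-largest volume into \ref{eq:universal bound intro} and comparing with $c_n$ settles the inequality, and the hypotheses $n\le 6$ and $\Q$-factoriality are used only to verify the gap hypothesis (they are believed to be removable; the gap hypothesis is conjectured to hold in general).

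This is where your proposal has a genuine gap. You anticipate needing ``a short, explicitly enumerable list of borderline polytopes'' that survive a coarse estimate and must then be checked by hand against $F(\Delta_n)$. Without a volume-only bound of the shape \ref{eq:universal bound intro}, there is no reason for the surviving list to be short: the number of smooth (let alone simplicial) reflexive polytopes in dimension $6$ runs into the millions, and computing the Legendre-dual/entropy functional $F(P)$ for each is not realistic. Moreover, an estimate that keeps track of ``number of vertices/lattice points of $kP$'' as you suggest would reintroduce exactly the combinatorial complexity the paper eliminates. The specific insight you are missing is thus twofold: (a) that the toric height admits a clean bound depending on $\text{vol}(-K_X)$ alone, and (b) that the remaining work is a single volume gap statement, not a classification-and-check of heights. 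Your equality discussion is closer to the mark---the rigidity does come from the equality case of the Fujita-type bound together with the strictness of \ref{eq:universal bound intro}---but it too is routed through the volume gap rather than a direct convexity analysis of $F$.
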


\begin{proof}
\emph{(sketch) }The first step of the proof is to establish following
bound
\begin{equation}
\frac{h_{\phi}(-\mathcal{K}_{\mathcal{X}})}{(n+1)!}\leq-\frac{1}{2}\text{vol}(-K_{X})\log\left(\frac{\text{vol}(-K_{X})}{(2\pi^{2})^{n}}\right)\label{eq:universal bound intro}
\end{equation}
Since $\text{vol}(-K_{X})$ is maximal for $X=\P^{n}$ \cite{fu}
the right hand side above is bounded by a constant $C_{n}$ only depending
on the dimension $n$ (in particular, this shows that the weak form
of Conjecture \ref{conj:height not intro} holds for toric varieties
in any dimension). Under the ``gap hypothesis'' that $\P^{n-1}\times\P^{1}$
has the second largest volume among all $n-$dimensional K-semistable
$X$ we show that the bound \ref{eq:universal bound intro} implies
Conjecture \ref{conj:height intro} for the canonical integral model
$\mathcal{X}$ of a toric Fano variety $X$ (by replacing $X$ with
$\P^{n-1}\times\P^{1}$ and comparing with the explicit formula \ref{eq:expl formul on p n}
on $\P_{\Z}^{n}$).  The proof of Theorem \ref{thm:main toric intro}
is concluded by verifying the gap hypothesis under the conditions
in Theorem \ref{thm:main toric intro}. But we do expect that the
gap hypothesis above holds for any toric Fano variety (as explained
in \cite{a-b0} this is the toric case of a folklore conjecture). 
\end{proof}
In fact, we expect that, for any given metric $\phi,$ the supremum
\[
\sup_{\mathcal{X}}h_{\phi}(-\mathcal{K}_{\mathcal{X}})
\]
 over all integral models $\mathcal{X}$ of $X$ (with $-\mathcal{K}_{\mathcal{X}}$
relatively ample) is attained at the canonical model $\mathcal{X}$
featuring in the previous theorem. As discussed in \cite{a-b0}, this
expectation is inspired by a conjecture of Odaka \cite{o} (see \cite{h-o}
for very recent results in this direction). Translated into our setup
Odaka's conjecture suggests that, in general, the sup above is attained
at a\emph{ globally K-semistable model} $\mathcal{X}$ (i.e. all the
fibers of $\mathcal{X}\rightarrow\text{Spec \ensuremath{\Z}}$ are
K-semistable), if such model exists. In the case when $n=1$ this
expectation can be confirmed using intersection theory on arithmetic
surfaces, as shown in \cite{a-b}:
\begin{thm}
\label{thm:main conj on p one complex}Conjecture \ref{conj:height not intro}
holds when $n=1.$ In other words, for any normal model $\mathcal{X}$
of $\P^{1}$ over $\Z$ such that $\mathcal{-}\mathcal{K_{\mathcal{X}}}$
defines a relatively ample $\Q-$line bundle
\[
h_{\phi}(-\mathcal{K}_{\mathcal{X}})/2+\frac{1}{2}\text{vol\ensuremath{(-K_{\P^{1}})}}\log\int_{X}\mu_{\phi}\leq\left(1+\log\pi\right)
\]
 with equalities iff $\mathcal{X}=\P_{\Z}^{1}$ and $\phi$ is a Kähler-Einstein
metric. 
\end{thm}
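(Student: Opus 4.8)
The plan is to reduce the statement to a computation in the arithmetic intersection theory of arithmetic surfaces. First I would fix a normal model $\mathcal{X}$ of $\P^1$ over $\Z$ with $-\mathcal{K}_{\mathcal{X}}$ relatively ample, and use the change-of-metrics formula \ref{eq:change of metric formula for chi vol} together with the scaling relation \ref{eq:scaling of chi vol} to observe that the functional
\[
\phi\mapsto \frac{h_{\phi}(-\mathcal{K}_{\mathcal{X}})}{2}+\frac{1}{2}\,\text{vol}(-K_{\P^1})\log\int_X\mu_\phi
\]
is, up to an additive constant depending only on $\mathcal{X}$, minus one-half the arithmetic Ding functional $\mathcal{D}_\Z$ from \ref{eq:arithm Ding}. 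By Theorem \ref{thm:arithm Vol and K semi st} (here $\P^1$ is K-polystable, hence K-semistable) this functional is bounded from below, so the supremum over $\phi$ is a finite real invariant of $\mathcal{X}$; moreover, by the Yau-Tian-Donaldson correspondence recalled in the excerpt, on $\P^1$ the supremum is attained exactly at the Fubini-Study (Kähler-Einstein) metric. Thus the content of the theorem is: (i) the invariant is maximized over all such $\mathcal{X}$ by $\mathcal{X}=\P^1_\Z$, with the value $1+\log\pi$; and (ii) $\P^1_\Z$ is the unique maximizer among normal models.

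For the explicit value at $\mathcal{X}=\P^1_\Z$ with the volume-normalized Fubini-Study metric, I would invoke \ref{eq:height on Pn}, i.e. $2h_{\phi}(\P^1_\Z,\mathcal{O}(1))=2\,\mathcal{E}_{\P^2}(\phi,\phi_0)$, specialize $n=1$, and feed this into the explicit formula \ref{eq:expl formul on p n} for $c_n$: for $n=1$ one gets $c_1=\tfrac12\cdot 4\cdot(2\cdot 1 - 1 + \log\pi)=2(1+\log\pi)$, and since $-\mathcal{K}_{\P^1_\Z}$ corresponds to $\mathcal{O}(2)$ the normalization by $(n+1)=2$ in \ref{eq:conj with mu volume intro} yields exactly the bound $1+\log\pi$. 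This pins down the right-hand side.

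The heart of the argument is step (i)–(ii), the comparison of an arbitrary normal model $\mathcal{X}$ with $\P^1_\Z$. Here I would work on the arithmetic surface $\mathcal{X}$ and its compactification: the relative anti-canonical $\Q$-line bundle $-\mathcal{K}_{\mathcal{X}}$, metrized by $\phi$, has a top arithmetic self-intersection number $h_\phi(-\mathcal{K}_{\mathcal{X}})=(-\mathcal{K}_{\mathcal{X}},\phi)^2$. A birational morphism (or the minimal resolution / comparison with a common model) relates $-\mathcal{K}_{\mathcal{X}}$ to the pullback of $-\mathcal{K}_{\P^1_\Z}$ plus a vertical correction divisor supported over the primes of bad reduction; the key point is that this vertical contribution has a sign. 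Concretely, writing $\pi:\mathcal{X}\dashrightarrow\P^1_\Z$ and comparing relative dualizing sheaves, the discrepancy divisor is effective precisely because $\P^1_\Z$ is the \emph{minimal} (and regular) model, so any other normal model dominates it with an effective exceptional/vertical part; the arithmetic Hodge index theorem (Faltings–Hriljac, in the form giving negative-definiteness of the intersection pairing on vertical fibral divisors) then forces the vertical contribution to $(-\mathcal{K}_{\mathcal{X}},\phi)^2$ to be non-positive, with equality iff there is no such vertical part, i.e. iff $\mathcal{X}=\P^1_\Z$. Combining this with the metric part (which, for fixed $\phi$, is the same on both models once we identify the generic fibers) gives the inequality and the equality case.

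The main obstacle I anticipate is making the last step fully rigorous in the $\Q$-line-bundle setting: $-\mathcal{K}_{\mathcal{X}}$ is only a relatively ample \emph{$\Q$}-divisor, so one must pass to a multiple $-m\mathcal{K}_{\mathcal{X}}$ that is a genuine relatively very ample line bundle, run the intersection-theoretic comparison there (taking care that normality of $\mathcal{X}$ but not regularity is available, so resolving $\mathcal{X}$ and tracking how $-\mathcal{K}$ changes under the resolution is needed), and then divide back by $m^{2}$ using homogeneity \ref{eq:vol chi tensor}; one also has to check that the metric/archimedean term behaves correctly under these manipulations and that the supremum over $\phi$ genuinely commutes with the model comparison. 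I would expect this to be handled exactly as in \cite{a-b}, whose Theorem this is, via the classification of fibral components on arithmetic surfaces together with the negative-definiteness of the fibral intersection form.
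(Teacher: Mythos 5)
Since the paper does not actually present a proof of this theorem but defers to the in-preparation reference \cite{a-b}, the most I can do is compare your plan against the two hints the paper does give: that the argument uses ``intersection theory on arithmetic surfaces,'' and (in the proof of Theorem \ref{thm:real P one}) that the key step established in \cite{a-b} is that, for a \emph{fixed} metric $\phi$, the functional $\mathcal{X}\mapsto h_{\phi}(-\mathcal{K}_{\mathcal{X}})$ over normal models is maximized uniquely by $\P_{\Z}^{1}$. Your plan has exactly this two-part structure: (i) a model comparison for fixed $\phi$ via arithmetic intersection theory, and (ii) the optimization over $\phi$ at $\mathcal{X}=\P_{\Z}^{1}$ reduced through Theorem \ref{thm:arithm Vol and K semi st} and the explicit height formula for Fubini--Study. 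This is the right decomposition, and your numerical check $c_{1}=2(1+\log\pi)$, divided by $(n+1)!=2$, indeed reproduces the bound $1+\log\pi$.

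Two remarks on precision. First, in step (i) you write that ``any other normal model dominates $\P_{\Z}^{1}$''; this is not correct as stated, since a normal model $\mathcal{X}$ need not admit a morphism to $\P_{\Z}^{1}$. What does hold is that the minimal resolution $\tilde{\mathcal{X}}\to\mathcal{X}$ is a regular model and therefore \emph{does} dominate the minimal regular model $\P_{\Z}^{1}$, giving a common dominating surface on which to compare the two pullbacks of $-\mathcal{K}$. You do mention ``the minimal resolution / comparison with a common model'' parenthetically, so this is a slip in the explanatory sentence rather than a missing idea, but it matters: on $\tilde{\mathcal{X}}$ the two pullbacks $\mu^{*}(-\mathcal{K}_{\mathcal{X}})$ and $\nu^{*}(-\mathcal{K}_{\P_{\Z}^{1}})$ differ by a vertical $\Q$-divisor $D$ that involves both the $\mu$-discrepancy (governed by the singularities of $\mathcal{X}$) and the $\nu$-exceptional divisor, and the sign analysis of $(L_{1}+L_{2})\cdot D$ is less immediate than ``the discrepancy divisor is effective.'' Second, the relevant negativity statement is Zariski's lemma for the intersection form on fibral divisors over a fixed prime (negative semi-definite, kernel spanned by the whole fiber), which is what kills the vertical contribution; calling it the arithmetic Hodge index theorem of Faltings--Hriljac conflates two related but distinct results. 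With these corrections, and with the $\Q$-Gorenstein and archimedean bookkeeping you flag as the main technical point, the outline is consistent with what the paper asserts about \cite{a-b}, though of course I cannot certify that \cite{a-b} argues this way in detail.
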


\subsection{The case of diagonal hypersurfaces}

Given a positive integer $d$ and $n+2$ integers $a_{i},$ consider
the diagonal hypersurface $\mathcal{X}_{a}$ of degree $d$ in $\P_{\Z}^{n+1}$
cut out by the homogeneous polynomial
\[
\sum_{i=0}^{n+1}a_{i}x_{0}^{d},
\]
 The corresponding complex variety $X_{a}$ is Fano if and only if
$d\leq(n+1)$ and is always K-polystable (and, in particular, K-semistable.
Indeed, over $\C,$ $\mathcal{X}_{a}$ is isomorphic to the Fermat
hypersurface $\mathcal{X}_{1}$ obtained by setting $a_{i}=1$ (which
is K-semistable; see Example \ref{exa:K-st}). Moreover, $-\mathcal{K}_{\mathcal{X}_{a}}$
is defined as a relatively ample line bundle, since, by adjunction,
\begin{equation}
-\mathcal{K}_{\mathcal{X}_{a}}\simeq\mathcal{O}(n+2-d)_{|\mathcal{X}_{a}}.\label{eq:adjunction isomo}
\end{equation}
 The following result is shown in \cite{a-b}:
\begin{thm}
\label{thm:diagonal hypersurface intro}Conjecture \ref{conj:height not intro}
holds for any diagonal hypersurface $\mathcal{X}_{a}$ which is Fano
(i.e. $d\leq n+1)$. More precisely, for any volume-normalized continuous
psh metric $\phi$ on $-K_{X_{a}}$
\[
h_{\phi}(-\mathcal{K}_{\mathcal{X}_{a}})\leq h_{\phi_{FS}}(-\mathcal{K}_{\P_{\Z}^{n}})-(d-1)(n+2-d)^{n}\sum_{i=0}^{n+1}\log|a_{i}|.
\]
where $\phi_{FS}$ denotes volume-normalized metric on $-K_{\P^{n}}$corresponding
to the Fubini-Study metric on $\mathcal{O}(1).$ Moreover, the inequality
is strict if $d\geq2.$
\end{thm}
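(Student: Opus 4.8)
The plan is to combine the adjunction isomorphism \ref{eq:adjunction isomo}, the arithmetic restriction formula for heights, and a reduction to the Fermat hypersurface $\mathcal{X}_1$. By \ref{eq:adjunction isomo} one identifies $-\mathcal{K}_{\mathcal{X}_a}$ with $\mathcal{O}(e)_{|\mathcal{X}_a}$, where $e:=n+2-d\geq 1$, so that $h_{\phi}(-\mathcal{K}_{\mathcal{X}_a})$ is the height of the restriction of a power of $\mathcal{O}(1)$ to the hypersurface $\mathcal{X}_a=\{s_a=0\}\subset\P_{\Z}^{n+1}$, with $s_a=\sum_{i=0}^{n+1}a_ix_i^d$. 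Fixing any continuous psh metric $\chi$ on $\mathcal{O}(1)\rightarrow\P_{\C}^{n+1}$ with $e\chi_{|X_a}=\phi$ (which exists since $\mathcal{O}(1)$ is ample), the general form of the height restriction formula \cite[Prop 2.3.1]{b-g-s} --- of which the formula in Example \ref{exa:(height-of-a} is the case $e=d$ --- together with $\int_{\P^{n+1}(\C)}(dd^{c}\chi)^{n+1}=1$ yields the extension-independent identity
\[
h_{\phi}(-\mathcal{K}_{\mathcal{X}_a})=d\,e^{n+1}\,h_{\chi}(\P_{\Z}^{n+1},\mathcal{O}(1))+\frac{e^{n+1}}{2}\int_{\P^{n+1}(\C)}\log\|s_a\|_{d\chi}^{2}\,(dd^{c}\chi)^{n+1},
\]
in which the first term does not involve $a$ and all the dependence on the coefficients sits in the last, Mahler-type, integral.

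Next I would reduce to the Fermat case. Over $\C$ the diagonal automorphism $T_a:=\mathrm{diag}(a_0^{-1/d},\dots,a_{n+1}^{-1/d})$ of $\P^{n+1}$ restricts to an isomorphism $X_1\rightarrow X_a$ with $T_a^{*}s_a=s_1:=\sum_i x_i^d$, so $\psi:=T_a^{*}\phi$ is again a volume-normalized psh metric on $-K_{X_1}=\mathcal{O}(e)_{|X_1}$. Pulling the displayed identity back through $T_a$ replaces $s_a$ by $s_1$ and $\chi$ by $T_a^{*}\chi$; rewriting $T_a^{*}\chi$ in terms of the extension of $\psi/e$ and tracking the effect of rescaling the homogeneous coordinates by the non-units $a_i^{-1/d}$ --- that is, of the vertical components this introduces over the primes $p\mid a_i$ in the integral models inside $\P_{\Z}^{n+1}$ --- a direct computation (using again $\int(dd^{c}\chi)^{n+1}=1$ and $\mathcal{O}(1)^n\cdot X_1=d$) gives the exact identity
\[
h_{\phi}(-\mathcal{K}_{\mathcal{X}_a})=h_{\psi}(-\mathcal{K}_{\mathcal{X}_1})-(d-1)(n+2-d)^n\sum_{i=0}^{n+1}\log|a_i|.
\]
This reduces the theorem to the estimate $h_{\psi}(-\mathcal{K}_{\mathcal{X}_1})\leq h_{\phi_{FS}}(-\mathcal{K}_{\P_{\Z}^{n}})$ for all volume-normalized psh metrics $\psi$, to be shown strict when $d\geq 2$.

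For the Fermat case itself: when $d=1$ the subscheme $\mathcal{X}_1=\{\sum_i x_i=0\}$ is linear and the quotient identifies $(\mathcal{X}_1,\mathcal{O}(n+1)_{|\mathcal{X}_1})$ with $(\P_{\Z}^n,-\mathcal{K}_{\P_{\Z}^n})$, so the bound is exactly the sharp estimate \ref{eq:expl formul on p n} for $\P_{\Z}^n$, with equality iff $\psi$ is Fubini-Study. When $d\geq 2$ one applies the restriction formula above to $\mathcal{X}_1$ itself: $X_1$ is smooth and K-polystable (homogeneous for $d\leq 2$, K-stable by \cite{zhu} for $d\geq 3$), the sharp height bound for $\P_{\Z}^{n+1}$ --- the $(n{+}1)$-dimensional instance of \ref{eq:expl formul on p n} --- bounds $d\,e^{n+1}h_{\chi}(\P_{\Z}^{n+1},\mathcal{O}(1))$ after rescaling $\chi$, via \ref{eq:scaling of chi vol}, to a volume-normalized extension, and Jensen's inequality for $\log$ --- using that all coefficients of $s_1=\sum_i x_i^d$ equal $1$ --- bounds the Mahler-type integral $\int\log\|s_1\|_{d\chi}^{2}(dd^{c}\chi)^{n+1}$ from above; combining the two gives $h_{\psi}(-\mathcal{K}_{\mathcal{X}_1})\leq h_{\phi_{FS}}(-\mathcal{K}_{\P_{\Z}^{n}})$, and since $\log\|s_1\|^2$ is non-constant for $d\geq 2$ this Jensen estimate is strict. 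Substituting back through the reduction identity completes the proof.

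The main obstacle is the exact evaluation in the reduction step: proving that the local contributions of $T_a$ at the primes $p\mid a_i$ assemble into precisely $(d-1)(n+2-d)^n\sum_i\log|a_i|$, with the exact combinatorial coefficient rather than merely up to a bounded error, which demands a careful analysis of the vertical divisors introduced when the non-integral rescaling $T_a$ is used to pass between the models $\mathcal{X}_1$ and $\mathcal{X}_a$ in $\P_{\Z}^{n+1}$. A secondary difficulty is keeping the comparison with $\P_{\Z}^{n+1}$ in the Fermat case sharp enough not to overshoot $h_{\phi_{FS}}(-\mathcal{K}_{\P_{\Z}^{n}})$.
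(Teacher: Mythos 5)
The paper does not reproduce a proof of this theorem; it only cites the companion paper \cite{a-b} (subtitled ``the log case''), so what follows is an assessment of your plan on its own terms.

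Your reduction step is more delicate than the phrase ``vertical components over $p\mid a_i$'' suggests, and the framing is slightly misleading. The map $T_a=\mathrm{diag}(a_i^{-1/d})$ is not defined over $\Q$, so there are no integral models being compared via $T_a$; one is simply comparing two metrics on $\mathcal{O}(1)\to\P^{n+1}_\C$ inside the extension-independent restriction identity. If one runs the computation naively with the \emph{tensor-power} pullback $T_a^{*}$ on $\mathcal{O}(e)$, the cocycle property of the energy primitive $\mathcal{E}$ forces $\mathcal{E}_{\chi}(A^{*}\chi)$ to be a continuous group homomorphism $GL_{n+2}(\C)\to\R$, hence a multiple of $\log|\det A|$, and the change-of-metric formula \ref{eq:change of metric formula for chi vol} then gives a coefficient $+(n+2-d)^{n+1}$ in front of $\sum\log|a_i|$ --- both wrong sign and wrong magnitude. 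The missing ingredient is that the pullback of a metric on $-K_{X_a}$ (the one that preserves volume normalization, and hence what $\psi$ must be) differs from the tensor-power pullback on $\mathcal{O}(e)\vert_{X_a}$ by a $\log|\det T_a|$ term coming from the $\det\C^{n+2}$ factor in the Euler-sequence/adjunction isomorphism $-K_{X_a}\cong\mathcal{O}(n+2-d)\vert_{X_a}\otimes\det\C^{n+2}$. Only after inserting this correction does the reduction identity produce $-(d-1)(n+2-d)^n$. This is precisely the ``main obstacle'' you flag, and your proposal simply asserts the identity rather than establishing it; without the $\det$-bookkeeping the reduction does not merely have an error of bounded size, it outputs the wrong constant.

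The Fermat case is the real content and your treatment of it is heuristic. You propose: extend $\psi/e$ to a psh $\chi$ on $\mathcal{O}(1)\to\P^{n+1}$ that is simultaneously volume-normalized for the $\P^{n+1}$-bound, apply the $(n+1)$-dimensional instance of \ref{eq:expl formul on p n} to control $h_{\chi}(\P^{n+1}_{\Z},\mathcal{O}(1))$, and bound the Mahler integral $\int\log\|s_1\|^2_{d\chi}(dd^c\chi)^{n+1}$ via Jensen. But: (i) the constraint $e\chi\vert_{X_1}=\psi$ pins down $\chi$ on $X_1$, so one cannot freely rescale $\chi$ to reach a $\P^{n+1}$-volume-normalized extension while preserving this constraint unless one separately argues such an extension exists; (ii) there is no a priori reason these two independent upper bounds combine to give \emph{exactly} the constant $h_{\phi_{FS}}(-\mathcal{K}_{\P^n_\Z})$ of formula \ref{eq:expl formul on p n} rather than overshooting it; and (iii) Jensen's being strict on one of the two pieces does not by itself yield the claimed strictness of the combined inequality for $d\geq 2$. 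You flag (ii) as ``secondary,'' but it is in fact where the sharpness of the theorem lives. I would also note that the existence of a \emph{continuous} psh extension $\chi$ of $\psi/e$ from the hypersurface to all of $\P^{n+1}$ is not automatic from ampleness alone and deserves a citation.

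Given that the companion paper is explicitly ``the log case,'' the route actually taken is almost certainly not yours: the diagonal hypersurface $\{\sum a_i x_i^d=0\}$ is a $(\mu_d)^{n+1}$-Galois cover, via $y_i=x_i^d$, of the hyperplane $\{\sum a_i y_i=0\}\cong\P^n$ branched along the coordinate hyperplanes, and the data $((\P^n,\Delta),\Delta=\tfrac{d-1}{d}\sum\{y_i=0\})$ is a toric log Fano pair. The factor $(d-1)$ in the theorem is transparently the ramification contribution, and the term $\sum\log|a_i|$ arises from the toric log-height of the hyperplane model with coefficients $a_i$; the entire estimate then comes from the sharp log-toric height bound rather than from a Bezout-plus-Jensen argument on the ambient $\P^{n+1}$. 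Your outline is not wrong, but it substitutes two unresolved analytic estimates for one systematic piece of machinery, and it is the latter that makes the constants close exactly.
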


In particular, when $d\geq2$ the bound in the previous theorem improves
as the absolute values of the coefficients $a_{i}$ are increased. 

\section{\label{sec:Bounds-on-the minimal}Bounds on the minimal height of
an algebraic point }

\subsection{\label{subsec:Motivations-from-the MP }Motivations from the Manin-Peyre
conjecture }

Let $X$ be a Fano variety defined over $\Q.$ It is expected that,
after perhaps replacing $\Q$ with a finite field extension $\F$
(i.e. a number field), the number of $\F-$points $X(\F)$ are Zariski
dense \cite{b-k-s,ts}. Moreover, according to the Manin-Peyre conjecture
\cite{f-m-t,pey,pey2}, the density of $\F-$points may be quantified
as follows. Assume, for simplicity, that $X_{\Q}$ is non-singular
and fix a relatively ample model $(\mathcal{X},\mathcal{L})$ of $(X_{\Q},-K_{X_{\Q}})$
over $\Z.$ Denote by $H_{\phi}(x)$ the corresponding \emph{(exponential)
absolute height }of a given point $x\in X(\F):$ 
\begin{equation}
H_{\phi}(x)=\exp h_{\phi}(x),\,\,\,h_{\phi}(x):=-\sum_{\sigma\in G}\frac{1}{[\F:\Q]}\log\left\Vert s_{\sigma(x)}\right\Vert _{\phi},\label{eq:def of abs height}
\end{equation}
 where $G$ is the Galois group of the field extension $\Q\hookrightarrow\F$
(having fixed an embedding of $\F$ into $\C)$ and $s_{x}$ is a
generator of the free $\mathcal{O}_{\F}-$module in $-K_{X_{\F}|x}$
induced by $(\mathcal{X},\F).$ As the metric is scaled, $\phi\rightarrow\phi+\lambda,$
\begin{equation}
h_{\phi+\lambda}(x)=h_{\phi}(x)+\lambda/2,\,\,\forall\lambda\in\R.\label{eq:scaling relation of h of point}
\end{equation}

\begin{example}
\label{exa:(Naive-heights).-Let}\emph{(``Naive heights'' over $\Q+i\Q$)}.
When $\phi$ is the restriction of the Weil metric and $\F=\Q+i\Q,$the
corresponding height function $H_{\phi}(x)$ on $X(\Q+i\Q)$ is given
by the same formula as when $\F=\Q$, considered in Example \ref{exa:(height-of-weil rational}:
representing $x=(x_{0}:\cdots:x_{n})$ for integers $x_{i}$ with
no common divisors
\[
H_{\phi}(x)=\max_{i=0,...,m}\left|x_{i}\right|.
\]
(since $G$ has order two and $G(i)=-i).$ 
\end{example}

According to the\emph{ Manin-Peyre conjecture }\cite{f-m-t,pey,pey2}
there exists a thin subset $T$ of $X$ such that, setting 
\[
N(B):=\left(\sharp\left\{ x\in(X-T)(\F):\,H_{\phi}(x)\leq B\right\} \right)^{1/[\F:\Q]}
\]
the following asymptotics hold:
\[
\lim_{B\rightarrow\infty}\frac{N(B)}{B(\log B)^{r}}=\Theta(\F,\mathcal{L},\phi)
\]
 for a positive integer $r$ and a non-zero constant $\Theta(\F,\mathcal{L},\phi)$
of the following form
\[
\Theta(\F,\mathcal{L},\phi)=\eta(\F,\mathcal{L})\mu_{\phi}(X(\C))^{m_{\C}/2[\F:\Q]}\mu_{\phi}(X(\R))^{m_{\R}/[\F:\Q]}
\]
 (assuming that ``weak approximation'' holds \footnote{in general, $\Theta(\F,-\mathcal{K}_{\mathcal{X}},\phi)$ is bounded
from above by the rhs above}) where the positive constant $\eta(\F,\mathcal{L})$ only depends
on $\F$ and the model $(\mathcal{X},\mathcal{L})$ and $\mu_{\phi}(X(\C))$
and $\mu_{\phi}(X(\R))$ denote the integrals over $X(\C)$ and $X(\R)$
of the measures induced by the metric $\phi$ on $-K_{X}.$ Furthermore,
$m_{\C}$ and $m_{\R}$ denote the number of real and complex embeddings
of $\Q$ in $\F.$ In particular, since $m_{\C}+m_{\R}=[\F:\Q],$
\[
\Theta(\F,\mathcal{L},\phi+\lambda)=e^{\lambda/2}\Theta(\F,\mathcal{L},\phi)\,\,\,\forall\lambda\in\R,
\]
 which is consistent with the scaling relation \ref{eq:scaling relation of h of point}.
A conjectural explicit geometric description of the set $T$ is proposed
in \cite{l-s-t}.
\begin{rem}
In contrast to \cite{pey} we are adopting \emph{absolute} heights
here and this is why we have to take a $[\F:\Q]-$root in the definition
of $N(B)$ to get linear growth in $B.$ As a consequence, our leading
constant $\Theta(\F,\mathcal{L},\phi)$ is the $[\F:\Q]-$root  of
the leading constant $c$ appearing in \cite{pey}.
\end{rem}

\subsubsection{The minimal height of $\F-$points }

In the case when $r=0$ (which is expected to happen when the Picard
number of $X$ is one) a heuristic argument was put forth in a series
of papers by Elsenhans-Jahnel (see \cite{e-j,e-j0}) suggesting that
\begin{equation}
\min_{x\in X(\F)}H_{\phi}(x)\Theta(\F,\mathcal{L},\phi)\leq C\label{eq:ej inequality}
\end{equation}
 for a constant $C$ which might be rather insensitive to the data
$(\mathcal{X},\mathcal{L},\phi).$ The simple argument assumes, in
particular, the validity of the Manin-Peyre conjecture and goes as
follows. Assume first, as in \cite{e-j,e-j0}, that $\Q=\F$ and consider
the $N(B)$ points of $x_{i}$ of height $H_{\phi}(x_{i})\leq B.$
Under the (optimistic) hypothesis that their heights $H_{\phi}(x_{i})$
are equidistributed in $[0,B],$ in the sense that 
\begin{equation}
\min_{x\in X(\Q)-T}H_{\phi}(x)N(B)\leq CB,\label{eq:heuristic bound}
\end{equation}
the inequality \ref{eq:ej inequality} follows directly from the Manin-Peyre
conjecture. More generally, a simple scaling argument suggests that
\ref{eq:ej inequality} holds for any number field $\F.$ The case
when $X$ is a diagonal quartic threefold and cubic surface, respectively,
was considered in \cite{e-j,e-j0} (with $\F=\Q).$ The metric $\phi$
on $-K_{X}$ was taken to be the one induced by the Weil metric on
projective space (as in Example \ref{exa:(Naive-heights).-Let}).
Although extensive numerical results were provided in favor of the
inequality \ref{eq:ej inequality} (by numerically computing the left
hand side for almost a million diagonal hypersurfaces) it was shown
in \cite{e-j0,e-j} that the constant $C$ can \emph{not} be taken
uniformly over all diagonal quartic threefold, nor over all diagonal
cubic surfaces. It was, however, pointed out that a slightly weaker
inequality might still hold, which is obtained by raising $\Theta(\F,\mathcal{L},\phi)$
to the power $1/(1+\epsilon),$ for $\epsilon$ arbitrarily close
to $0.$ 
\begin{rem}
\label{rem:counter}The violation of the inequality \ref{eq:ej inequality},
exhibited in \cite{e-j0} is obtained by looking at the family of
subschemes $\mathcal{X}_{a}$ cut-out by the homogeneous polynomial
\[
-ax_{0}^{d}+x_{1}^{d}+...+x_{n+1}^{d}
\]
 on $\P_{\Z}^{n+1},$ parametrized by a positive integer $a,$ for
$d=4$ and $n=3$ (more precisely, $a$ is assumed to be a product
of distinct primes $p_{i}$ such $p_{i}\equiv3$ (mod 4). In this
case $\Theta(\Q,\mathcal{X}_{a})$ is proportional to  product over
all primes $p$ times $\mu_{\phi}(X(\R))$ and it is the contribution
from the ``bad'' primes $p_{i},$ dividing $a,$ that violates the
bound \ref{eq:ej inequality}, as $a\rightarrow\infty.$ 
\end{rem}

\subsection{\label{subsec:A-bound-on minimal}A bound on the minimal height of
an algebraic point on a K-semistable Fano variety }

In view of the counter-example recalled above and since the Manin-Peyre
conjecture can only be expected to hold for sufficiently ``large''
field extensions $\F,$ it seems natural to weaken the inequality
\ref{eq:ej inequality} by taking the infimum of both factors over
$\F:$ 
\begin{equation}
\inf_{\F}\Theta(\F,\mathcal{L},\phi)\inf_{\F}\left(\min_{x\in X(\F)}H_{\phi}(x)\right)\leq C.\label{eq:weak ej with inf F}
\end{equation}
 For a \emph{fixed} model $(\mathcal{X},\mathcal{L})$ this inequality
is clearly implied by the following one: 
\begin{equation}
\max\left\{ \mu_{\phi}(X(\C))^{1/2},\mu_{\phi}(X(\R)\right\} \left(\inf_{x\in X(\bar{\Q})}H_{\phi}(x)\right)\leq C\label{eq:weak ej inequality}
\end{equation}
(after perhaps increasing the constant $C),$ where $\bar{\Q}$ denotes
the field of algebraic points and the height $H_{\phi}(x)$ of a given
algebraic point $x$ is defined by formula \ref{eq:def of abs height},
by taking $\F$ as the residue field of the point $x.$ As next shown,
Theorem \ref{thm:arithm Vol and K semi st} implies that if $X$ is
assumed K-semistable, the complex part $X(\C)$ does satisfy the last
inequality above. More generally, consider the\emph{ $i$ th successive
minimum} $e_{i}(\mathcal{L},\phi)$ attached to $(\mathcal{L},\phi),$
introduced in \ref{eq:ej inequality} (leveraged in the proof of the
Bogomolov conjecture in \cite{zh2}): 
\[
e_{i}(\mathcal{L},\phi)=\sup_{Y_{i}}\left\{ \inf_{x\in(X-Y_{i})}h_{\phi}(x)\right\} ,
\]
where the sup ranges over all subvarieties $Y_{i}$ of $X_{\Q}$ of
codimension $i$ for $i\in[1,n+1]$ 
\begin{thm}
Let $X$ be an $n-$dimensional K-semistable Fano variety defined
over $\Q$ . Given a model $(\mathcal{X},\mathcal{L})$ of $(X_{\Q},-K_{X_{\Q}})$
over $\Z$ there exists a positive constant $C(\mathcal{L})$ such
that for any continuous psh metric $\phi$ on $-K_{X}:$ 
\[
\text{\ensuremath{\frac{1}{n+1}\sum_{i=1}^{n+1}e_{i}}(\ensuremath{\mathcal{L}},\ensuremath{\phi})}+\frac{1}{2}\log\mu_{\phi}(X(\C)\leq\log C(\mathcal{L}).
\]
In particular, 
\[
\inf_{x\in X(\bar{\Q})}H_{\phi}(x)\mu_{\phi}(X(\C))^{1/2}\leq C(\mathcal{L}).
\]
 Moreover, if the weak form of Conjecture \ref{conj:height not intro}
holds, then the constant $C(\mathcal{-}\mathcal{K}_{\mathcal{X}})$
can be taken to be of the form $e^{C_{n}/\text{vol\ensuremath{(-K_{X})}}}.$
In particular, this is the case for $\mathcal{X}$ appearing in Theorems
\ref{thm:main toric intro}, \ref{thm:main conj on p one complex}
and \ref{thm:diagonal hypersurface intro}. 
\end{thm}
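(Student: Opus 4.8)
The plan is to combine Zhang's inequality, which bounds the arithmetic average of the successive minima by the normalized height, with the characterization of K-semistability as lower-boundedness of the arithmetic Ding functional (Theorem \ref{thm:arithm Vol and K semi st}).

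First I would record Zhang's fundamental inequality for the successive minima of a semipositively metrized line bundle (\cite{zh2}; this is the tool behind the proof of the Bogomolov conjecture): writing $\hat{h}_{\phi}(\mathcal{L}):=h_{\phi}(\mathcal{L})/\bigl((n+1)!\,\text{vol}(-K_{X})\bigr)$ for the normalized height, one has $\tfrac{1}{n+1}\sum_{i=1}^{n+1}e_{i}(\mathcal{L},\phi)\leq\hat{h}_{\phi}(\mathcal{L})$. Since a subvariety of $X_{\Q}$ of codimension $n+1$ is empty, the smallest successive minimum is $e_{n+1}(\mathcal{L},\phi)=\inf_{x\in X(\bar{\Q})}h_{\phi}(x)$, and being the minimum of the $e_{i}$ it is at most their average. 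Passing from genuine line bundles to the $\Q$-line bundle $-K_{X}$ is routine: replace $(\mathcal{L},\phi)$ by $(m\mathcal{L},m\phi)$ and use the homogeneity of $h_{\phi}$ (formula \ref{eq:vol chi tensor}), of $\text{vol}$, and of the $e_{i}$.

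Next I would bring in K-semistability. By Theorem \ref{thm:arithm Vol and K semi st} the supremum of $h_{\phi}(\mathcal{L})$ over all volume-normalized continuous psh metrics $\phi$ is finite, hence the arithmetic Ding functional $\mathcal{D}_{\Z}$ of \ref{eq:arithm Ding} is bounded from below by some $-D(\mathcal{L})$. Crucially $D(\mathcal{L})$ depends only on the model $(\mathcal{X},\mathcal{L})$ and not on $\phi$, because $\mathcal{D}_{\Z}$ is invariant under the scaling $\phi\mapsto\phi+\lambda$ (combine \ref{eq:scaling of chi vol} with $\mu_{\phi+\lambda}=e^{-\lambda}\mu_{\phi}$), so its infimum over all psh metrics equals its infimum over the volume-normalized ones. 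Unwinding the definition of $\mathcal{D}_{\Z}$ and dividing by $2\,\text{vol}(-K_{X})$ then gives $\hat{h}_{\phi}(\mathcal{L})+\tfrac{1}{2}\log\mu_{\phi}(X(\C))\leq D(\mathcal{L})/(2\,\text{vol}(-K_{X}))=:\log C(\mathcal{L})$. Chaining with Zhang's inequality yields the first displayed estimate; feeding in $e_{n+1}\leq\tfrac{1}{n+1}\sum_{i}e_{i}$ and exponentiating yields the ``in particular'' bound $\inf_{x\in X(\bar{\Q})}H_{\phi}(x)\,\mu_{\phi}(X(\C))^{1/2}\leq C(\mathcal{L})$.

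For the ``moreover'' part, the weak form of Conjecture \ref{conj:height not intro} reads $h_{\phi}(-\mathcal{K}_{\mathcal{X}})/(n+1)!+\tfrac{1}{2}\text{vol}(-K_{X})\log\int_{X}\mu_{\phi}\leq C_{n}$ with $C_{n}$ depending only on $n$; dividing by $\text{vol}(-K_{X})$ shows one may take $D(-\mathcal{K}_{\mathcal{X}})=2C_{n}$, hence $C(-\mathcal{K}_{\mathcal{X}})=\exp\!\bigl(C_{n}/\text{vol}(-K_{X})\bigr)$, and Theorems \ref{thm:main toric intro}, \ref{thm:main conj on p one complex} and \ref{thm:diagonal hypersurface intro} supply exactly this weak form for the models in question. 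The only genuinely external ingredient is Zhang's successive-minima inequality, and the step I expect to require the most care is reconciling its classical adelic formulation with the present setup — a fixed integral model together with a single archimedean continuous psh metric, and the passage to $\Q$-line bundles — together with checking that the constant furnished by K-semistability is truly independent of $\phi$.
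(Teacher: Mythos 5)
Your proposal is correct and follows essentially the same route as the paper: combine Theorem \ref{thm:arithm Vol and K semi st} (lower boundedness of the arithmetic Ding functional, equivalently finiteness of the supremum of the height over volume-normalized psh metrics) with the lower estimate on $\hat{h}_{\phi}(\mathcal{L})$ in Zhang's inequalities \ref{eq:Zhang ine}, then observe that $e_{n+1}$ equals $\inf_{x\in X(\bar{\Q})}h_{\phi}(x)$ and is at most the average of the $e_{i}$ since they are decreasing in $i$. The only presentational difference is that the paper simply normalizes $\mu_{\phi}(X(\C))=1$ at the outset, whereas you make the scale-invariance of $\hat{h}_{\phi}(\mathcal{L})+\tfrac{1}{2}\log\mu_{\phi}(X(\C))$ explicit; the two are equivalent.
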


\begin{proof}
By scale invariance, we may as well assume that $\mu_{\phi}(X(\C))=1.$
Hence, the first bound follows directly from combining Theorem \ref{thm:arithm Vol and K semi st}
with the lower bound on $\hat{h}(\mathcal{L},\phi)$ in Zhang's inequalities
\cite[Thm 5.2]{Zh0}, 
\begin{equation}
e_{1}(\mathcal{L},\phi)\geq\hat{h}(\mathcal{L},\phi)\geq(n+1)^{-1}\sum_{i=1}^{n\text{+1}}e_{i}(\mathcal{L},\phi).\label{eq:Zhang ine}
\end{equation}
The second bound in the corollary then follows immediately. Indeed,
the exponential of $e_{n+1}(\mathcal{L},\phi)$ coincides with the
infimum of $H_{\phi}(x)$ over $X(\bar{\Q})$ and $e_{i}(\mathcal{L},\phi)$
is decreasing in $i.$ 
\end{proof}
\begin{rem}
It is tempting to speculate that the validity of the stronger first
inequality in the previous corollary, involving the average of the
successive minima (which assumes that $X$ is K-semistable), is a
reflection of the presence of the thin set $T$ in the heuristic bound
\ref{eq:heuristic bound} (which in many situations is a subvariety
of $X_{\Q}).$ 
\end{rem}

For toric $(\mathcal{L},\phi)$ explicit formulas for the succesiva
minima are established in \cite{b-p-s}.

\section{The role of Kähler-Einstein metrics and real analogs}

Let $X$ be Fano variety defined over $\Q$ and fix a model $(\mathcal{X},\mathcal{L})$
of $(X_{\Q},-K_{X_{\Q}})$ over $\Z.$ Assume, for simplicity, that
$X$ is non-singular. 

\subsection{The case of a totally imaginary field}

Let $\F$ be a totally imaginary number field, i.e. a number field
that cannot be embedded in $\R$ (e.g. $\F=\Q+i\Q).$ Then the the
only contribution from the metric $\phi$ to Peyre's constant $\Theta(\F,\mathcal{L},\phi)$
comes from the total integral of the measure $\mu_{\phi}$ over the
complex points $X(\C).$ In this case Theorem \ref{thm:arithm Vol and K semi st}
thus says that $X$ is K-semistable iff the bound \ref{eq:ej inequality}
holds (with a constant $C(\F,\mathcal{L})$ independent of $\phi)$
when the invariant $\inf_{x\in X(\F)}H_{\phi}(x)$ is replaced by
the exponential of the normalized height $\hat{h}_{\phi}(\mathcal{L}).$
Equivalently, 
\begin{equation}
\Theta(\F,\mathcal{L},\phi)\exp\hat{h}_{\phi}(\mathcal{L})\leq C(\F,\mathcal{L})\label{eq:uniform bound on Theta wrt phi}
\end{equation}
Moreover, when $X$ is K-polystable the maximum of left hand side
above, as $\phi$ ranges over all continuous psh metrics on $L$ is
attained precisely for the Kähler-Einstein metrics on $X$ (which
are uniquely determined up to automorphisms of $X).$ Since the left
hand side is invariant under scaling of the metric, i.e. when $\phi$
is replaced by $\phi+\lambda,$ we may as well demand that the metric
$\phi$ satisfies the normalization condition that $\hat{h}_{\phi}(\mathcal{L})=0.$
Equivalently, this means that 
\begin{equation}
\hat{h}_{\phi}(\mathcal{L})=0.\label{eq:normalization h zero}
\end{equation}
In other words, under this normalization condition, the Kähler-Einstein
metrics are precisely the ones which maximize Peyre's constant $\Theta(\F,\mathcal{L},\phi).$ 

Interestingly, the normalization condition \ref{eq:normalization h zero}
appears naturally in arithmetic geometry. For example, it is satisfied
by the Weil metric on projective spaces and the Néron-Tate metric
on abelian varieties, or more generally, by the canonical metrics
$\phi$ associated to a given endomorphism $f$ of $(X,L)$ of degree
$d$ at least two (i.e. $f^{*}L=dL$ and $\phi$ is uniquely determined
by $f^{*}\phi=d\phi)$ \cite{c-s,zh0b}. The role of canonical metrics
in the context of the Manin-Peyre conjecture is, for example, discussed
in the introduction of \cite{sw}. 
\begin{rem}
The comparison of Peyre's constant $\Theta(\F,\mathcal{L},\phi)$
with the height $h_{\phi}(\mathcal{L})$ above is in line with the
suggestion put forth in \cite[Section 4.15]{ts} that $\Theta(\F,\mathcal{L},\phi)$
(or more precisely the corresponding Tamagawa number) might be interpreted
as a kind of height of $(\mathcal{X},\mathcal{L},\phi).$ However,
as pointed out in \cite[Section 4.15]{ts}, unlike the ordinary heights
Peyre's constant $\Theta(\F,\mathcal{L},\phi)$ varies in a non-trivial
fashion with respect to field extensions. This is one of the motivations
for taking the infimum over $\F$ in the bound \ref{eq:weak ej with inf F}
(but see Section \ref{subsec:Bounds-on-Peyre's} for a discussion
concerning uniform upper bounds on $\Theta(\F,\mathcal{L},\phi)).$ 
\end{rem}

\subsection{\label{subsec:The-general-case}The general case }

Now consider a general number field $\F$. Then the conjectural leading
constant $\Theta(\F,\mathcal{L},\phi)$ in the Manin-Peyre conjecture
also receives a contribution from the measure $\mu_{\phi}$ over the
\emph{real} points $X(\R).$ Since a complex Fano variety $X$ is
K-semistable iff if it is K-semistable over $\R$ \cite{zhu} it seems
thus natural to conjecture that the following real analog of Theorem
\ref{thm:arithm Vol and K semi st} holds:
\[
X\,\text{is K-semistable \ensuremath{\text{\ensuremath{\iff} \ensuremath{\sup\left\{ h_{\phi}(\mathcal{L}):\,\text{\ensuremath{\phi\,\,\text{\ensuremath{\text{cont. }}psh,}}}\int_{X(\R)}\mu_{\phi}=1\right\} <\infty} }}}
\]
By scale invariance, the volume-normalization above may be dispensed
with if $h_{\phi}(\mathcal{L})$ is replaced by the functional
\[
\phi\mapsto\hat{h}_{\phi}(\mathcal{L})+\log\int_{X(\R)}\mu_{\phi}.
\]
 Moreover, if $X$ is K-polystable it seems natural to conjecture
that the sup of this functional is attained at some metric $\phi.$
Since the K-polystability of $X$ is equivalent to the existence of
a Kähler-Einstein metric on $-K_{X}$ one would expect that the restriction
to $X(\R)$ of the maximizer is the restriction of a Kähler-Einstein
metric $\phi_{KE}$ on $X,$ invariant under complex conjugation.
If this is the case it then it follows from the fact that that $h_{\phi}(\mathcal{L})$
is increasing in $\phi$ that any maximizer of the functional is of
the form $P_{X(\R)}\phi_{KE},$ where $P_{X(\R)}\phi$ is the continuous
psh metric on $-K_{X}$ defined by the following envelope:
\[
P_{X(\R)}\phi=\sup\left\{ \psi:\text{\ensuremath{\psi\,}}\ensuremath{\text{cont. }}\text{psh \ensuremath{\text{on \ensuremath{-K_{X},}}}}\,\psi\leq\phi\,\,\text{on\,\ensuremath{X(\R)}}\right\} .
\]
(see the beginning of the proof of Theorem \ref{thm:real P one} below).
Equivalently, this means that $P_{X(\R)}\phi_{KE}$ is characterized
by the following properties \cite{b-b}: 
\[
P_{X(\R)}\phi_{KE}=\text{\ensuremath{\phi_{KE}\,\,\text{on \ensuremath{X(\R),\,\,\,(dd^{c}(P_{X(\R)}\phi_{KE}))^{n}=0\,\,\text{on \ensuremath{X-X(\R).} }}}}}
\]
More generally, if one consider any Fano variety $X$ defined over
$\R$ (but not necessarily over $\Q)$ one may expect the same conjectural
picture if $2h_{\phi}(\mathcal{L})$ is replaced by the primitive
$\mathcal{E}_{\phi_{0}}(\phi)$ of the complex Monge-Ampère operator
(formula \ref{eq:def of functional E}):
\begin{conjecture}
\label{conj:K-semi real}Let $X$ be a Fano variety defined over $\R$
and assume that $X(\R)$ is non-empty. Then 
\begin{itemize}
\item $X$ is K-semistable if and only if 
\[
\sup_{\phi}\left(\frac{1}{\text{vol}(-K_{X})}\mathcal{E}_{\psi_{0}}(\phi)+2\log\int_{X(\R)}\mu_{\phi}\right)<\infty
\]
 where $\phi$ denotes a continuous psh metric on $-K_{X}.$ 
\item If $X$ is K-polystable, the supremum above is attained precisely
when $\phi=P_{X(\R)}\phi_{KE}$ for a Kähler-Einstein metric on $-K_{X}$
which is invariant under complex conjugation.
\end{itemize}
\end{conjecture}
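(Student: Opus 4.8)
\emph{Overall strategy and reduction to envelopes.} Write $G(\phi):=\frac{1}{\text{vol}(-K_X)}\mathcal{E}_{\psi_0}(\phi)+2\log\int_{X(\R)}\mu_\phi$. The plan is to read $G$ as a Ding-type functional living on the real locus, reduce to ``envelope'' metrics, and then attack it by a sharp Moser--Trudinger-type inequality on $X(\R)$; this can be carried out completely when $n=1$. First I would use that $\psi\mapsto\mathcal{E}_{\psi_0}(\psi)$ is nondecreasing, its differential being the positive Monge--Amp\`ere measure \ref{eq:diff of functional E}, and that $X(\R)$, being a maximal totally real submanifold, is non-pluripolar, so that $P_{X(\R)}\phi\geq\phi$ on $X$ with equality on $X(\R)$ and the envelope is continuous. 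Hence passing from $\phi$ to $P_{X(\R)}\phi$ does not decrease the first term of $G$ and leaves the second unchanged, so $\sup_\phi G$ is a supremum over metrics $\phi=P_{X(\R)}\psi$, i.e. over continuous psh metrics with $(dd^c\phi)^n=0$ on $X\setminus X(\R)$; moreover, using strict monotonicity of $\mathcal{E}$ on a dense set of $\phi$, every maximiser is itself such an envelope. This is already a first step toward the second bullet, and reduces everything to a variational problem for the restriction $f:=\phi|_{X(\R)}$.

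\emph{The extremal equation.} Differentiating the two terms of $G$ at an envelope $\phi$ (using \ref{eq:diff of functional E} and differentiating $\log\int_{X(\R)}\mu_\phi$) shows that the critical point equation among envelopes is
\[
(dd^c\phi)^n=\frac{(-K_X)^n}{\int_{X(\R)}\mu_\phi}\,\mu_\phi\ \ \text{on }X(\R),\qquad (dd^c\phi)^n=0\ \ \text{on }X\setminus X(\R).
\]
I would then check that $\phi=P_{X(\R)}\phi_{KE}$ solves this: since $P_{X(\R)}\phi_{KE}=\phi_{KE}$ on $X(\R)$, the right-hand side is a fixed multiple of $\mu_{\phi_{KE}}$ there, while $(dd^cP_{X(\R)}\phi_{KE})^n$ restricted to $X(\R)$ is governed by the balayage/Dirichlet--to--Neumann description of the Monge--Amp\`ere measure of an envelope along a maximal totally real submanifold; the K\"ahler--Einstein equation \ref{eq:ke eq}, together with the mass identity $\int_X(dd^c\phi)^n=(-K_X)^n$, should then pin down the proportionality. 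For $X=\P^1$ this is explicit: $P_{\P^1(\R)}\phi_{FS}$ is harmonic on each of the two discs of $\P^1\setminus\P^1(\R)$, and one computes $(dd^cP_{\P^1(\R)}\phi_{FS})=\frac{2}{\pi}\mu_{\phi_{FS}}$ on $\P^1(\R)$, in agreement with $(-K_{\P^1})^1/\int_{\P^1(\R)}\mu_{\phi_{FS}}=2/\pi$.

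\emph{The case $n=1$.} Taking $\psi_0=P_{X(\R)}\phi_{FS}$ as reference (legitimate, the statements being reference-independent up to an additive constant), and writing $h$ for $-\frac12(f-\phi_{FS}|_{X(\R)})$ transported to $S^1=\P^1(\R)$ by the standard parametrisation, a short computation reduces $G(\phi)-G(\psi_0)$ to
\[
2\left(\log\frac{1}{2\pi}\int_{S^1}e^{h}\,d\theta-\frac{1}{2\pi}\int_{S^1}h\,d\theta-\frac12\sum_{k\in\Z}|k|\,|\hat h_k|^2\right),
\]
the quadratic term being exactly the $\dot H^{1/2}$-seminorm produced by the jump of the two harmonic extensions across $\P^1(\R)$, and the linear and exponential terms having matching first variation at $h=0$ (reflecting that $\psi_0$ is the critical metric of the previous step). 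The claim that this is $\leq 0$ is precisely the sharp Moser--Trudinger inequality on the circle with the half-Laplacian (the one-dimensional case of Beckner's inequalities on $S^n$, equivalently the logarithmic Lebedev--Milin inequality), and its equality case --- $e^{h}$ proportional to the boundary values of $|\gamma'|$ for $\gamma$ an automorphism of the disc --- translates into $f$ being the restriction of $\gamma^*\phi_{FS}$ for $\gamma\in\mathrm{Aut}(\P^1)$ defined over $\R$, i.e. into $\phi=P_{X(\R)}\phi_{KE}$ modulo an automorphism. Since $\P^1$ is K-polystable the forward implication of the first bullet is vacuous here, so both bullets follow for $n=1$.

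\emph{General $n$, and the main obstacle.} For ``$X$ K-semistable $\Rightarrow\sup_\phi G<\infty$'' I would compare, for envelope metrics, the two volumes: a local analysis near $X(\R)$ --- the Monge--Amp\`ere mass of an envelope concentrates on $X(\R)$ with controlled transverse thickness --- should give $\int_X\mu_\phi\asymp\bigl(\int_{X(\R)}\mu_\phi\bigr)^2$ with uniform constants, so that $\sup_\phi G<\infty$ would follow from boundedness below of the complex Ding functional, i.e. from K-semistability via Theorem \ref{thm:arithm Vol and K semi st} and \cite{li,ber0} (note $\text{vol}(-K_X)=(-K_X)^n/n!$, so up to a universal constant the normalisations agree). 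For the converse, using that a complex Fano variety is K-semistable iff it is so over $\R$ \cite{zhu}, I would argue by contraposition: a destabilising test configuration over $\R$ yields a geodesic ray $\phi_t$ along which $\mathcal{E}_{\psi_0}(\phi_t)$ is affine of slope controlled by the negative Donaldson--Futaki invariant, and one needs that $\int_{X(\R)}\mu_{\phi_t}$ decays at most half as fast along the ray as $\int_X\mu_{\phi_t}$ --- a comparison of the real and complex log canonical thresholds of the configuration --- to force $G(\phi_t)\to+\infty$. The genuinely missing ingredient, which I expect to be the main obstacle, is the sharp ``real Moser--Trudinger inequality'' on $X(\R)$ for $n\geq 2$, with extremal $P_{X(\R)}\phi_{KE}$, needed for the K-polystable maximiser statement: for $n=1$ it is the classical inequality above, but for higher-dimensional real loci of general K-semistable Fano varieties there is no analogue of the complex machinery (Yau--Tian--Donaldson theory, Berndtsson positivity, Arakelov intersection theory) underlying Theorems \ref{thm:arithm Vol and K semi st}--\ref{thm:diagonal hypersurface intro}.
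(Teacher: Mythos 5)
The statement you are addressing is a conjecture; the paper does not prove it in general but only establishes the case $X=\P^1$ in Theorem \ref{thm:real P one}. Your treatment of that case is essentially the paper's own argument: both reduce to envelope metrics via the monotonicity of $\mathcal{E}$ (formula \ref{eq:diff of functional E}) together with the identity $P_{X(\R)}\phi=\phi$ on $X(\R)$, and both then recast the resulting functional on $S^1\cong\P^1(\R)$ as the sharp Moser--Trudinger / Lebedev--Milin inequality, with the equality case given by M\"obius automorphisms, yielding the $P_{X(\R)}\phi_{KE}$ maximizers. Writing the quadratic term as a Fourier $\dot H^{1/2}$-seminorm rather than as the Dirichlet integral of the harmonic extension, and avoiding the explicit Cayley transform $T\in SU(2)$ used in the paper, are computational rephrasings rather than a genuinely different route.

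For general $n$ your proposal goes beyond what the paper attempts, and there is a concrete gap in the step you rely on. The comparison $\int_X\mu_\phi\asymp\bigl(\int_{X(\R)}\mu_\phi\bigr)^2$ for envelope metrics is consistent with scaling $\phi\mapsto\phi+c$, but the asserted uniformity of the implicit constants over all envelopes is unjustified: the ratio is governed by the transverse decay rate (essentially the normal derivative) of $P_{X(\R)}\psi$ across $X(\R)$, and this can degenerate as the restriction $\psi|_{X(\R)}$ concentrates. Without that uniformity the proposed reduction of the forward implication to boundedness of the complex Ding functional (Theorem \ref{thm:arithm Vol and K semi st}, \cite{li,ber0}) does not close. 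You correctly identify that a sharp real Moser--Trudinger inequality on $X(\R)$ with extremal $P_{X(\R)}\phi_{KE}$ is the genuinely missing ingredient for $n\geq 2$; that is exactly why the paper leaves the general statement as a conjecture.
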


Likewise, it seems natural to, optimistically, propose the following
real analog of Conjecture \ref{conj:height intro}: 
\begin{conjecture}
\label{conj:real height proj}Let $\mathcal{X}$ be a projective scheme
over $\Z$ of relative dimension $n$ whose relative anti-canonical
divisor defines a relatively ample $\Q-$line bundle $-\mathcal{K}_{\mathcal{X}}$
and assume that $X$ is K-semistable and $X(\R)$ is non-empty. Let
$\phi$ be a continuous psh metric on $-K_{X}$ which is volume-normalized
over $\R,$ i.e. $\int_{X(\R)}\mu_{\phi}=1.$ Then 
\[
h_{\phi}(-\mathcal{K}_{\mathcal{X}})\leq h_{\phi_{0}}(-\mathcal{K}_{\P_{\Z}^{n}}),
\]
 where $\phi_{0}$ is volume-normalized over $\R$ and coincides with
$P_{X(\R)}\phi_{FS}$ up to scaling. Moreover, if $\mathcal{X}$ is
normal then equality holds iff $\mathcal{X}=\P_{\Z}^{n}$ and the
metric $\phi$ is Kähler-Einstein and invariant under complex conjugation. 
\end{conjecture}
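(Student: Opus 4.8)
The plan is to transport to the real locus $X(\R)$ the strategy behind the complex statements (Conjecture \ref{conj:height not intro} and its verified cases), replacing the complex Monge--Amp\`ere machinery by its real counterpart. First I would pass to a scale-invariant formulation. By the scaling relation \ref{eq:scaling of chi vol}, for metrics normalized by $\int_{X(\R)}\mu_\phi=1$ the asserted inequality is equivalent to lower boundedness of the \emph{real arithmetic Ding functional}
\[
\mathcal{D}_{\R}(\phi):=-\frac{2h_\phi(-\mathcal{K}_{\mathcal{X}})}{(n+1)!}-\operatorname{vol}(-K_X)\log\int_{X(\R)}\mu_\phi ,
\]
with infimum to be compared with its value on $\P^n_{\Z}$. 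By the change-of-metric formula \ref{eq:change of metric formula for chi vol}, $\mathcal{D}_{\R}$ equals, up to an additive constant depending only on the model $\mathcal{X}$, the analytic functional $\phi\mapsto-\operatorname{vol}(-K_X)^{-1}\mathcal{E}_{\psi_0}(\phi)-\log\int_{X(\R)}\mu_\phi$ appearing in Conjecture \ref{conj:K-semi real}. Thus the whole problem splits into (i) a real analog of Theorem \ref{thm:arithm Vol and K semi st}, namely Conjecture \ref{conj:K-semi real}, and (ii) a sharp comparison of the extremal value with $\P^n_{\Z}$, whose explicit anti-canonical height is available from \cite{g-s}.

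Second I would reduce to envelope-type metrics. Since $\phi\mapsto h_\phi(\mathcal{L})$ is increasing and $P_{X(\R)}\phi=\phi$ on $X(\R)$, replacing $\phi$ by $P_{X(\R)}\phi$ increases $h_\phi$ while leaving $\int_{X(\R)}\mu_\phi$ unchanged; hence any maximizer of $\mathcal{D}_{\R}$ has the form $P_{X(\R)}\psi$, which by the Bedford--Taylor characterization of the envelope quoted in the excerpt satisfies $(dd^c(P_{X(\R)}\psi))^n=0$ on $X\setminus X(\R)$. The key technical input is then a \emph{real Legendre/push-forward formula}: the complex Monge--Amp\`ere measure $(dd^c P_{X(\R)}\psi)^n$ is carried by the totally real locus $X(\R)$ and there coincides, up to an explicit dimensional constant, with the real Monge--Amp\`ere measure $\mathrm{MA}_{\R}(\psi)=\det(\partial^2\psi/\partial x_i\partial x_j)\,dx$ of the restriction $\psi|_{X(\R)}$. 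This turns both $\mathcal{E}_{\psi_0}(P_{X(\R)}\psi)$ and $\int_{X(\R)}\mu_\phi$ into purely real integrals on $X(\R)$, so that $\mathcal{D}_{\R}$ becomes a genuine real Ding-type functional whose Euler--Lagrange equation is the \emph{real K\"ahler--Einstein equation} $\mathrm{MA}_{\R}(\psi)=c\,e^{-\psi/2}dx$ on $X(\R)$.

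Third I would solve this real variational problem. Concavity of $\mathcal{E}_{\psi_0}$ along geodesics in the space of metrics together with strict concavity of $\psi\mapsto\log\int_{X(\R)}\mu_\psi$ gives existence and uniqueness modulo automorphisms of a maximizer, which must then solve the real K\"ahler--Einstein equation. When $X$ is K-polystable, choosing a complex K\"ahler--Einstein metric $\phi_{KE}$ invariant under complex conjugation, I would check that $\phi_{KE}|_{X(\R)}$ solves that equation: the complex equation \ref{eq:ke eq} together with conjugation symmetry forces the matching of the real and complex Monge--Amp\`ere measures on the totally real locus, so that $P_{X(\R)}\phi_{KE}$ (which equals $\phi_{KE}$ on $X(\R)$ and has vanishing Monge--Amp\`ere mass off it) is the unique maximizer. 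For $X=\P^n$ this pins down $\phi_0=P_{X(\R)}\phi_{FS}$ up to scaling, exactly as in the statement, and for $n=1$ it specializes to the argument behind Theorem \ref{thm:real P one}.

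Finally, for the sharp comparison I would, in the spirit of the toric case in the excerpt, aim at a universal estimate of the form $\mathcal{D}_{\R}(\phi)\ge -c-\tfrac12\operatorname{vol}(-K_X)\log\!\big(\operatorname{vol}(-K_X)/v_n\big)$ for an explicit dimensional constant $v_n$ playing the role of $(2\pi^2)^n$ (the real Legendre normalization). Since $\operatorname{vol}(-K_X)\le\operatorname{vol}(-K_{\P^n})$ by Fujita's bound \cite{fu}, the right-hand side is monotone in $\operatorname{vol}(-K_X)$ and extremal at $\P^n$; a ``gap hypothesis'' that the second-largest volume among K-semistable Fanos is realized by $\P^{n-1}\times\P^1$, combined with direct comparison against the explicit value on $\P^n_{\Z}$, would upgrade this to the sharp inequality, while Liu's rigidity \cite{liu} for the volume bound together with the real K\"ahler--Einstein uniqueness above would yield the equality case. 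I expect the main obstacle to be the real Legendre/push-forward formula of the second step in dimension $n>1$: establishing that the complex Monge--Amp\`ere mass of $P_{X(\R)}\phi$ concentrates on $X(\R)$ and agrees there with the real Monge--Amp\`ere mass, together with the requisite $C^{1,1}$ regularity of the envelope; in dimension one this is elementary (subharmonic envelopes and harmonic extension), but in general it requires the local theory of plurisubharmonic envelopes over totally real submanifolds in the spirit of Lempert's work, of Guedj--Lu--Zeriahi, and of the ``concentration on the contact set'' principle of Berman--Boucksom--Witt Nystr\"om. A secondary, currently open, difficulty is Conjecture \ref{conj:K-semi real} itself, which would need a real analog of Li's lower-boundedness criterion for the Ding functional.
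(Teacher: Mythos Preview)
The statement is a \emph{conjecture} in the paper; there is no proof to compare against except in the case $n=1$ (Theorem~\ref{thm:real P one}). For $n=1$ the paper shares your first reduction---replacing $\phi$ by $P_{X(\R)}\phi$ so that $dd^c\phi$ is supported on $\P^1(\R)$---but then proceeds quite differently from your outline: it does not set up a real Monge--Amp\`ere/K\"ahler--Einstein equation on $X(\R)$. Instead it uses an element $T\in SU(2)$ to carry $\P^1(\R)$ onto $S^1$, rewrites the height via formula~\ref{eq:height on Pn} as a Dirichlet energy on the unit disc, and identifies the functional $\hat h_\phi+\log\int_{\P^1(\R)}\mu_\phi$ with the left-hand side of the sharp Moser--Trudinger inequality on $S^1$ \cite{bech}. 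The equality case of Moser--Trudinger then gives the characterization of maximizers directly, without any variational analysis of a real KE equation.

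Beyond the gaps you already flag (Conjecture~\ref{conj:K-semi real} is open, and the concentration/regularity of $P_{X(\R)}\phi$ for $n>1$ is unproven), two concrete problems in your write-up: (i) your $\mathcal{D}_{\R}$ is \emph{not} scale-invariant. Since $\mu_{\phi+\lambda}=e^{-\lambda/2}\mu_\phi$ on $X(\R)$ (half the exponent of the complex case), the coefficient in front of $\log\int_{X(\R)}\mu_\phi$ must be $2\operatorname{vol}(-K_X)$, not $\operatorname{vol}(-K_X)$; this is the factor $2$ in Conjecture~\ref{conj:K-semi real} and the pairing $h_\phi/(n+1)$ with $(-K_X)^n\log\int_{X(\R)}\mu_\phi$ in the reformulation after Conjecture~\ref{conj:real height proj}. (ii) The assertion that ``the complex equation together with conjugation symmetry forces the matching of the real and complex Monge--Amp\`ere measures on the totally real locus'' is unjustified: the complex KE equation $(dd^c\phi_{KE})^n=c\,\mu_{\phi_{KE}}$ is an identity of $2n$-forms on $X(\C)$ and has no direct restriction to the half-dimensional $X(\R)$, so there is no evident reason why $\phi_{KE}|_{X(\R)}$ should solve your real equation $\mathrm{MA}_{\R}(\psi)=c\,e^{-\psi/2}dx$. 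In the paper's $n=1$ argument this issue is bypassed entirely by the Moser--Trudinger route.
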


Removing the normalization condition on $\phi$, the inequality in
the previous conjecture is equivalent to the following bound: 
\[
\frac{h_{\phi}(-\mathcal{K}_{\mathcal{X}})}{(n+1)}+(-K_{X})^{n}\log\int_{X(\R)}\mu_{\phi}\leq\frac{h_{P_{X(\R)}\phi_{FS}}(-\mathcal{K}_{\P_{\Z}^{n}})}{(n+1)}+(-K_{\P^{n}})^{n}\log\int_{X(\R)}\mu_{\phi_{FS}}
\]

Accordingly, as before, we will say that a \emph{weak }form of the
previous conjecture holds if there exists a constant $C_{n}$ such
that 
\[
\frac{h_{\phi}(-\mathcal{K}_{\mathcal{X}})}{(n+1)}+(-K_{X})^{n}\log\int_{X(\R)}\mu_{\phi}\leq C_{n}.
\]

We next establish the case when $n=1:$ 
\begin{thm}
\label{thm:real P one}The previous conjectures hold when $X=\P^{1}$
with its standard real structure, i.e. when $X(\R)=\P_{\R}^{1}.$
Equivalently, this means that the maximum of the following functional
on continuous psh metrics $\phi$ on $-K_{X},$ 
\[
\phi\mapsto\hat{h}_{\phi}(-\mathcal{K}_{\mathcal{X}})+\log\int_{X(\R)}\mu_{\phi},
\]
is attained precisely when $\phi=P_{X(\R)}\phi_{KE}$ for a Kähler-Einstein
metric on $-K_{X}$ which is invariant under complex conjugation.
Moreover, the maximum equals $\log2\pi$ when $\mathcal{X}=\P_{\Z}^{1}.$ 
\end{thm}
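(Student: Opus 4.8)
The plan is to reduce the theorem to a one–dimensional computation on $\P^1$ using the characterization of $P_{X(\R)}\phi$ as a solution of a homogeneous complex Monge–Ampère equation off the real locus, together with the change-of-metrics formula \eqref{eq:change of metric formula for chi vol} for the height and the known value $h_{\phi_{FS}}(-\mathcal{K}_{\P^1_\Z})$. First I would record that, since $h_\phi(\mathcal{L})$ is monotone increasing in $\phi$ and depends on $\phi$ only through its values where $(dd^c\phi)^n$ is supported, any maximizer of $\phi\mapsto \hat h_\phi(-\mathcal{K}_{\mathcal X})+\log\int_{X(\R)}\mu_\phi$ must satisfy $\phi=P_{X(\R)}\phi$: replacing $\phi$ by $P_{X(\R)}\phi$ does not decrease the height (monotonicity) and does not change $\int_{X(\R)}\mu_\phi$ (the two metrics agree on $X(\R)$). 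Hence it suffices to maximize over metrics of the form $P_{X(\R)}\psi$, equivalently over the boundary data $\psi|_{\P^1_\R}$, and for such metrics $(dd^c\phi)=0$ on $\P^1\setminus\P^1_\R$, i.e. $\phi$ is harmonic on the two hemispheres.

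Next I would make the variational problem explicit in the coordinate $z$ on $\P^1$ with $\sigma z=\bar z$, so $\P^1_\R=\R\cup\{\infty\}$. Writing $\phi$ in the trivialization of $-K_{\P^1}=\mathcal O(2)$, a metric of the form $P_{X(\R)}\psi$ is determined by its restriction $u(x)=\phi(x)$, $x\in\R$, which must be (after adjusting by the fixed reference) a function whose harmonic extension to $\C\setminus\R$ has the prescribed psh/positivity and growth at $\infty$; concretely $dd^c\phi$ becomes a positive measure on $\R$, the boundary Laplacian of $u$. The functional then reads
\[
F(u)=\hat h_{P\psi}(-\mathcal K_{\mathcal X})+\log\int_{\R}e^{-\frac12 u(x)}\,dx,
\]
and I would differentiate: using \eqref{eq:diff of functional E}–\eqref{eq:change of metric formula for chi vol} the first variation of the height term along $\dot\phi$ is $\tfrac12\int \dot\phi\,(dd^c\phi)/\mathrm{vol}(-K_{\P^1})$, which for $\phi=P\psi$ is an integral of $\dot u$ against the measure $(dd^c\phi)|_\R$, while the variation of $\log\int_{X(\R)}\mu_\phi$ is $-\tfrac12\int \dot u\,\mu_\phi/\int\mu_\phi$. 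Setting the sum to zero yields the real Monge–Ampère (here: Monge–Ampère $=$ ODE) equation $(dd^c\phi)|_{\R}=c\,\mu_\phi$ on $\R$, i.e. the one-dimensional real analog of \eqref{eq:ke eq}; combined with $(dd^c\phi)^n=0$ off $\R$ this is exactly the Euler–Lagrange equation characterizing $P_{X(\R)}\phi_{KE}$. I would then check that a critical point is a genuine maximum by concavity: $\mathcal E_{\phi_0}$ is concave along affine segments of metrics and $\phi\mapsto\log\int\mu_\phi$ is concave as well (Prékopa/Hölder), so $F$ is concave and the critical point is the unique maximizer up to the $\mathrm{PGL}_2(\R)$-symmetry.

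Finally, to evaluate the maximum at $\mathcal X=\P^1_\Z$ I would take $\psi=\phi_{FS}$, so $\phi=P_{\P^1_\R}\phi_{FS}$, and compute both terms explicitly. The Fubini–Study metric on $-K_{\P^1}$ has $dd^c\phi_{FS}$ proportional to the round measure and restricts on $\P^1_\R$ to $u_{FS}(x)=2\log(1+x^2)$ up to normalization; its harmonic extension off $\R$ is obtained by the Poisson integral, and $\int_\R e^{-\frac12 u_{FS}}dx=\int_\R (1+x^2)^{-1}dx=\pi$. For the height term I would use the known closed form $h_{\phi_{FS}}(-\mathcal K_{\P^1_\Z})$ from \cite{g-s} (equivalently \eqref{eq:height on Pn}) together with the change-of-metrics formula \eqref{eq:change of metric formula for chi vol} applied to the pair $(\phi_{FS},P_{\P^1_\R}\phi_{FS})$, where the correction $\mathcal E_{\phi_{FS}}(P_{\P^1_\R}\phi_{FS})$ is an elementary integral since $P_{\P^1_\R}\phi_{FS}-\phi_{FS}$ and the relevant Monge–Ampère measures are explicit. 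Assembling the two contributions should give $\hat h+\log\int_{X(\R)}\mu=\log 2\pi$. The main obstacle I anticipate is not any single computation but the rigorous justification of the variational setup at the level of the (merely continuous, not smooth) envelope $P_{X(\R)}\phi$: one must argue that the envelope is regular enough that the boundary measure $(dd^c\phi)|_{\R}$ and the differentiation of the height functional along it are legitimate, and that the class of competitors is exactly captured by boundary data on $\P^1_\R$; here I would lean on the regularity theory for envelopes with $dd^c\phi\ge 0$ and the identities in \cite{b-b} for $P_{X(\R)}\phi$ quoted above.
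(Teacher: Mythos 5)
Your first reduction---that a maximizer must satisfy $\phi=P_{X(\R)}\phi$ because replacing $\phi$ by $P_{X(\R)}\phi$ does not decrease the height and leaves $\int_{X(\R)}\mu_\phi$ unchanged---is exactly the paper's first step, using the monotonicity of $\mathcal{E}$ and the fact that $P_{X(\R)}\phi=\phi$ on $X(\R)$. After that, however, your argument has a genuine gap at the step where you try to upgrade the Euler--Lagrange analysis to a global maximum. You assert that both $\mathcal{E}_{\phi_0}$ and $\phi\mapsto\log\int_{X(\R)}\mu_\phi$ are concave along affine segments, so that $F$ is concave. The second claim is false: along an affine path $\phi_t=(1-t)\phi_0+t\phi_1$, H\"older's inequality gives $\int e^{-\phi_t}\le\bigl(\int e^{-\phi_0}\bigr)^{1-t}\bigl(\int e^{-\phi_1}\bigr)^t$, so $\log\int_{X(\R)}\mu_\phi$ is \emph{convex}, not concave, along affine segments (Pr\'ekopa's theorem pertains to marginals of log-concave densities, which is a different situation). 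Since $\mathcal{E}$ is concave and $\log\int\mu_\phi$ is convex, the sum is a concave-plus-convex competition; the boundedness and identification of the extremizer are precisely the content of a sharp Moser--Trudinger-type inequality and cannot be dispatched by a one-line convexity claim. Even if you meant Mabuchi geodesics rather than affine segments, you would need a real-locus analogue of Berndtsson's convexity theorem for the term $\log\int_{X(\R)}\mu_{\phi}$, which is not established by the references you cite and would itself be a substantive step.

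The paper resolves exactly this difficulty by a change of variables: it conjugates by an element $T\in SU(2)$ taking $\P^1(\R)$ to $S^1$, uses the $SU(2)$-invariance of the Fubini--Study metric to rewrite $\int_{X(\R)}\mu_{2\psi}=\tfrac12\int_{S^1}e^{-T^*u}\,d\theta$ and the $T$-invariance of the height to rewrite $h_\psi(\mathcal O(1))$ as a Dirichlet energy of the harmonic extension of $v=T^*u$ to the unit disc, and then the functional is (up to an explicit additive constant) exactly the left-hand side of Beckner's sharp Moser--Trudinger inequality on $S^1$. That sharp inequality, together with its known equality case, supplies all at once the upper bound, the value $\log 2\pi$, and the classification of maximizers as $GL(2,\R)$-conjugates of $\phi_{FS}$ (i.e.\ envelopes of conjugation-invariant K\"ahler--Einstein metrics). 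Your proposed route---direct variational calculus on $\R$ plus a concavity appeal---would need to be replaced by an actual proof of a sharp Moser--Trudinger inequality on the real locus to be made correct, at which point it would in effect be re-deriving Beckner's theorem rather than avoiding it.
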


\begin{proof}
It will be enough to prove Conjecture \ref{conj:K-semi real}. Indeed,
as shown in \cite{a-b} (in the proof of Theorem \ref{thm:main conj on p one complex}),
for any fixed metric $\phi$ the functional $\mathcal{X}\mapsto h_{\phi}(-\mathcal{K}_{\mathcal{X}})$
is maximal for $\mathcal{X}=\P_{\Z}^{1}$ (and $\P_{\Z}^{1}$ is the
unique normal maximizer). First observe that, in general, to prove
Conjecture \ref{conj:K-semi real} it is enough to consider continuous
psh metrics $\phi$ having the property that $(dd^{c}\phi)^{n}$ is
supported on $X(\R).$ Indeed, if $\phi$ is any continuous psh metric,
then $P_{X(\R)}\phi$ has this property and $P_{X(\R)}\phi=\phi$
on $X(\R)$ (since $\phi$ is a contender for the sup defining $P_{X(\R)}\phi$).
Accordingly, since $\mathcal{E}_{\psi_{0}}(\phi)$ is increasing in
$\phi$ (as follows directly from the fact that its differential is
a measure, by formula \ref{eq:diff of functional E}) we get
\[
\frac{1}{\text{vol}(-K_{X})}\mathcal{E}_{\psi_{0}}(\phi)-2\log\int_{X(\R)}\mu_{\phi}\leq\frac{1}{\text{vol}(-K_{X})}\mathcal{E}_{\psi_{0}}(P_{X(\R)}\phi)-2\log\int_{X(\R)}\mu_{P_{X(\R)}\phi}
\]
 with equality iff $\mathcal{E}_{\psi_{0}}(\phi)=\mathcal{E}_{\psi_{0}}(P_{X(\R)}\phi),$
which is equivalent to $\phi=P_{X(\R)}\phi$ on all of $X$ (using
that $\phi\leq P_{X(\R)}\phi;$ see the proof of Step 2 in the proof
of Theorem 2.1 in \cite{ber0b}). 

Turning to the case when $X=\P^{1}$ we will identify $-K_{X}$ with
$\mathcal{O}(2)$ in the usual way and write $\phi=2\psi,$ where
$\psi$ is a continuous psh metric on $\mathcal{O}(1).$ Then the
functional to be maximized is 
\[
\psi\mapsto h_{\psi}(\mathcal{O}(1))+\log\int_{X(\R)}\mu_{2\psi},\,\,\,\,\left(\mu_{2\psi}=e^{-\psi}dx\right)
\]
By the previous discussion we may assume that
\begin{equation}
dd^{c}\psi=0\,\,\text{on \ensuremath{\P^{1}-\P^{1}(\R)}}.\label{eq:ddc phi vanishes}
\end{equation}
We will deduce the desired result from the \emph{Moser-Trudinger inequality}
on $S^{1}$ \cite{bech}, saying that for any smooth function $v$
on $S^{1}$ 
\begin{equation}
-\frac{1}{4\pi}\int_{\boldsymbol{D}}\left|\nabla\widetilde{v}\right|^{2}+\int v\frac{d\theta}{2\pi}+\log\int e^{-v}\frac{d\theta}{2\pi}\leq0\label{eq:MT on circle}
\end{equation}
 where $\widetilde{v}$ denotes the harmonic extension of $v$ to
the unit-disc $\boldsymbol{D}$ . Moreover, equality holds iff there
exists a constant $c$ and an element $A\in GL(2,\C),$ fixing $S^{1},$
such that $e^{-v}d\theta=A^{*}(e^{-(v+c)}d\theta).$ We will adopt
the standard embedding of $\C$ in the complex projective line $\P^{1},$
so that $S^{1}$ may be identified with a subset of $\P^{1}.$ Likewise
we will identify the closure in $\P^{1}$ of the real line $\R$ in
$\C$ with $\P^{1}(\R).$Consider the standard action of $SU(2)$
on $\C^{2},$ where an element in $SU(2)$ is identified with a matrix
$A,$ 
\[
A=\left(\begin{array}{cc}
a & -\bar{b}\\
b & \bar{a}
\end{array}\right),\,\,\,a\bar{a}+b\bar{b}=1.
\]
It induces an $SU(2)-$action on $\mathcal{O}(k)\rightarrow\P^{1}$
for any integer $k,$ which coincides with the natural action on $-K_{X}$
under the usual identification of $-K_{X}$ with $\mathcal{O}(2)$
(using that $\det A=1).$ Fix $T\in SU(2)$ mapping $\P^{1}(\R)$
onto $S^{1}$$,$ e.g. $(a,b)=(1,i)/\sqrt{2.}$ Decompose the metric
$\psi$ on $\mathcal{O}(1)$ as
\[
\psi=\phi_{FS}+u,
\]
 where $\phi_{FS}$ denotes the Fubini-Study metric on $\mathcal{O}(1)$
and $u$ is a smooth function on $\P^{1}.$ 

\emph{Step1:} 
\[
\int_{X(\R)}\mu_{2\psi}=\frac{1}{2}\int_{S^{1}}e^{-T^{*}u}d\theta.
\]
 To see this note that on the affine real line $\R$ we can express
$\mu_{2\psi}=e^{-u}e^{-\phi_{FS}(x)}dx.$ It will thus be enough to
show that 
\begin{equation}
T^{*}(e^{-\phi_{FS}(x)}dx)=\frac{1}{2}d\theta.\label{eq:T pull is dtheta}
\end{equation}
 Since the Fubini-Study metric is $SU(2)-$invariant, the measure
$e^{-\phi_{FS}(x)}dx(=(1+x^{2})^{-1}dx)$ is invariant under the subgroup
of $SU(2)$ fixing $\P^{1}(\R)$ and hence $T^{*}(e^{-\phi_{FS}(x)}dx)$
is a measure on $S^{1}$ which is invariant under the subgroup of
$SU(2)$ fixing $S^{1}.$ But the latter subgroup simply acts by rotations
on $S^{1}$ and hence the formula \ref{eq:T pull is dtheta} follows,
using that $\int_{\R}(1+x^{2})^{-1}dx=\pi$).

\emph{Step 2:} setting $v=T^{*}u$
\[
-h_{\psi}(\mathcal{O}(1))=\frac{1}{4\pi}\int_{\boldsymbol{D}}\left|\nabla\widetilde{v}\right|^{2}dxdy-\int v\frac{d\theta}{2\pi}-\log2
\]
To prove this first note that for any continuous psh metric $\psi$
on $\mathcal{O}(1)$ 
\begin{equation}
h_{\psi}(\mathcal{O}(1))=h_{T^{*}\psi}(\mathcal{O}(1)),\label{eq:invariance under T}
\end{equation}
 Indeed, by the definition \ref{eq:def of xhi L infty}, combined
with \cite[Lemma 5.3]{b-b}, the following well-known formula holds
\[
h_{\psi}(\mathcal{O}(1))/2=\sup\lim_{k\rightarrow\infty}k^{-2}\log\sup_{(\P^{1})^{N_{k}}}\left\Vert \det S\right\Vert _{\psi},\,\,\,\,N_{k}:=\dim H^{0}(\P^{1},k\mathcal{O}(1)),
\]
 where $\det S$ denotes the determinant section of the $N_{k}-$fold
tensor product of $k\mathcal{O}(1)$ induced from the standard multinomial
bases. Since $\det T=1$ the diagonal action of $T$ on $(\P^{1})^{N_{k}}$
leaves the section $\det S$ invariant and hence formula \ref{eq:invariance under T}
follows.

Now decompose $T^{*}\psi=T^{*}\phi_{FS}+v.$ Since $T^{*}\phi_{FS}=\phi_{FS}$
we have that $T^{*}\psi=v+\log2$ on $S^{1}$ in the standard trivialization
of $\mathcal{O}(2)\rightarrow\C$ (where $\phi_{FS}$ corresponds
to the function $\log(1+|z|^{2})).$ Moreover, by formula \ref{eq:height on Pn},
\[
h_{\Phi}(\mathcal{O}(1))=\mathcal{E}_{\Phi_{0}}(\Phi)=-\frac{1}{2}\int_{\P^{1}}d(\Phi-\Phi_{0})\wedge d^{c}(\Phi-\Phi_{0})+\int_{\P^{1}}(\Phi-\Phi_{0})dd^{c}\Phi_{0},
\]
where $\Phi_{0}$ denotes the Weil metric. Note that $dd^{c}\Phi_{0}$
is the invariant probability measure on $S^{1}:$ 
\[
dd^{c}\Phi_{0}=S^{1}\wedge d\theta/2\pi
\]
 Moreover, setting $\Phi=T^{*}\psi$ we have that $dd^{c}\Phi$ is
supported on $S^{1}$ (since $dd^{c}\psi$ is assumed to be supported
on $\P^{1}(\R)).$ Since $dd^{c}\Phi_{0}$ is also supported on $S^{1}$
it follows that the function $\Phi-\Phi_{0}$ is harmonic on the complement
of $S^{1}.$ Since its restriction to $S^{1}$ equals $v+\log2$ it
follows, in particular, that the restriction of $\Phi-\Phi_{0}$ to
$\D$ coincides with $\widetilde{v}+\log2.$ Hence, exploiting the
anti-holomorphic symmetry on $\P^{1}$ fixing $S^{1},$ 
\[
\int_{\P^{1}}d(\Phi-\Phi_{0})\wedge d^{c}(\Phi-\Phi_{0})=2\int_{\D}d\widetilde{v}\wedge d^{c}\widetilde{v}=2\frac{1}{4\pi}\int_{\D}\left|\nabla\widetilde{v}\right|^{2}dxdy.
\]
 All in all, this means that 
\[
2h_{\psi}(\mathcal{O}(1))=2h_{T^{*}\psi}(\mathcal{O}(1))=\frac{1}{2}2\frac{1}{4\pi}\int_{\D}\left|\nabla\widetilde{v}\right|^{2}dxdy+\int vd\theta/2\pi+\log2
\]
 which concludes the proof of Step 2. 

Finally, combining Step 1 and Step 2 reveals that the functional

\[
\phi\mapsto\hat{h}_{\phi}(-\mathcal{K}_{\P^{1}})+\log\int_{X(\R)}\mu_{\phi}
\]
 coincides, up to an additive constant, with the functional appearing
in the left hand side of formula \ref{eq:MT on circle}, which is
maximal for $v=0.$ Since $v=T^{*}u$ this means that functional of
$\phi$ above is maximal when $u=0,$ i.e. when $\phi=2\phi_{FS}.$
Finally, by the uniqueness result for maximizers of the Moser-Trudinger
inequality \ref{eq:MT on circle} the metric $\phi$ is maximal iff
there exists a real constant $c$ and $A\in GL(2,\R)$ such that $\mu_{\phi}=A^{*}\mu_{\phi_{FS}+c}$
which equivalently means that $\phi-c=A^{*}\phi_{FS}$ for some $A\in GL(2,\R)$
where $A^{*}$ denotes the pull-back under the canonical action on
$-K_{X}.$ Since any Kähler-Einstein metric on $-K_{\P^{1}}$is (up
to an additive constant) of the form $A^{*}\phi_{FS}$ for $A\in GL(2,\C)$
this means that the maximizers come from the Kähler-Einstein metrics
which are invariant under complex conjugation on $\P^{1},$as desired. 
\end{proof}

\section{Back to bounds on the height of minimal algebraic points and successive
minima}

Just as in Section \ref{subsec:A-bound-on minimal} the validity of
Conjecture \ref{conj:real height proj} would imply uniform upper
bounds on the average of the successive minima, using Zhang's inequality
\ref{eq:Zhang ine}. More generally, assuming that the weak form of
both the complex and the real versions of the conjectural height bounds
hold, Zhang's inequality yields the following bound

\[
\max\left\{ \mu_{\phi}(X(\C))^{1/2},\mu_{\phi}(X(\R)\right\} \left(\inf_{x\in X(\bar{\Q})}H_{\phi}(x)\right)\leq e^{C_{n}/\text{vol\ensuremath{(-K_{X})}}},
\]
 which, as explained in Section \ref{subsec:A-bound-on minimal} can
be viewed as weak form of the inequality \ref{eq:ej inequality} proposed
by Elsenhans-Jahnel. 

In particular, in the case when $X=\P^{1}$ combining Zhang's inequality
with Theorem \ref{thm:real P one} gives 
\begin{equation}
\frac{1}{2}\left(e_{0}(\mathcal{-K}_{\P_{\Z}^{1}},\phi)+(e_{1}(\mathcal{-K}_{\P_{\Z}^{1}},\phi)\right)\leq\hat{h}_{\phi}(\mathcal{-K}_{\P_{\Z}^{1}})\leq\log\frac{2\pi}{\int_{\P^{1}(\R)}\mu_{\phi}}.\label{eq:Zhang mt on p one}
\end{equation}
 Hence, 
\[
\frac{1}{2}\left(e_{0}(\mathcal{-K}_{\P_{\Z}^{1}},\phi)+(e_{1}(\mathcal{-K}_{\P_{\Z}^{1}},\phi)\right)\leq\log\frac{2\pi}{\int_{\P^{1}(\R)}\mu_{\phi}}
\]
 However, while $P_{X(\R)}\phi_{KE}$ gives equality in the second
inequality in formula \ref{eq:Zhang mt on p one} (the value $\log2)$
this is not so in the first one. Indeed, it follows from \cite[Thm 1]{f-p-p}
(or the general toric formulas in \cite{b-p-s}) that $P_{X(\R)}\phi_{KE}$
gives the value $2^{-1}\log2$ in the first term. Interestingly, the
situation changes when $\P^{1}(\R)$ is replaced by $S^{1},$ as next
observed. 
\begin{prop}
The following two inequalities hold
\[
\frac{1}{2}\left(e_{0}(\mathcal{-K}_{\P_{\Z}^{1}},\phi)+(e_{1}(\mathcal{-K}_{\P_{\Z}^{1}},\phi)\right)\leq\log\frac{2\pi}{\int_{S^{1}}\mu_{\phi}}
\]
\[
\inf_{\P^{1}(\bar{\boldsymbol{Q})}}H_{\phi}\leq\frac{2\pi}{\int_{S^{1}}\mu_{\phi}}
\]
 with equalities if $\phi$ is the metric on $-K_{\P^{1}}$induced
by the Weil metric. 
\end{prop}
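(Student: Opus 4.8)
The plan is to reduce the statement to the Moser--Trudinger inequality \eqref{eq:MT on circle} on $S^{1}$, in close analogy with the proof of Theorem \ref{thm:real P one}, but now with $\P^{1}(\R)$ replaced by $S^{1}$ from the outset. Write $-K_{\P^{1}}=\mathcal{O}(2)$ and $\phi=2\psi$ with $\psi$ a continuous psh metric on $\mathcal{O}(1)$. Exactly as in the reduction at the start of the proof of Theorem \ref{thm:real P one}, using that $\mathcal{E}$ (equivalently $h_{\psi}$) is increasing in $\psi$ and that $P_{S^{1}}\psi=\psi$ on $S^{1}$, it suffices to treat metrics with $dd^{c}\psi$ supported on $S^{1}$; for such metrics $\psi-\phi_{FS}$ is harmonic off $S^{1}$. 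Decompose $\psi=\phi_{FS}+u$ and set $v=u_{|S^{1}}$, so that $\widetilde v$ (harmonic extension to $\boldsymbol D$) agrees with $\psi-\phi_{FS}$ on $\boldsymbol D$.

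The two analytic inputs are then: (i) $\int_{S^{1}}\mu_{\phi}=\frac12\int_{S^{1}}e^{-v}\,d\theta$ up to the normalizing constant, which follows from the computation $dd^{c}\phi_{FS}=S^{1}\wedge d\theta/2\pi$ together with $\mu_{2\phi_{FS}}=e^{-\phi_{FS}}$; and (ii) the identity (from \eqref{eq:height on Pn}, as in Step 2 of Theorem \ref{thm:real P one}, now without needing the transport map $T$ since we already work on $S^{1}$)
\[
h_{\psi}(\mathcal{O}(1))=-\frac{1}{4\pi}\int_{\boldsymbol D}\left|\nabla\widetilde v\right|^{2}\,dx\,dy+\int_{S^{1}} v\,\frac{d\theta}{2\pi}+\log 2 .
\]
Combining these shows that $\hat h_{\phi}(-\mathcal{K}_{\P^{1}})+\log\int_{S^{1}}\mu_{\phi}$ equals, up to an additive constant, the left-hand side of \eqref{eq:MT on circle} evaluated at $v$, hence is $\le \log 2\pi$ with equality iff $v\equiv 0$, i.e. $\psi=\phi_{FS}$ up to an additive constant, i.e. $\phi$ is (a scaling of) the Weil metric restricted in the sense of $P_{S^{1}}$; this is the first inequality after feeding in Zhang's inequality \eqref{eq:Zhang ine}, exactly as in \eqref{eq:Zhang mt on p one}. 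For the second inequality, one uses that $\exp e_{n+1}(-\mathcal{K}_{\P^{1}},\phi)=\inf_{x\in\P^{1}(\bar{\Q})}H_{\phi}(x)$ and that the successive minima are decreasing, so $e_{1}\ge e_{n+1}$ gives $\hat h_{\phi}\ge \tfrac12(e_{0}+e_{1})\ge e_{1}\ge e_{n+1}$; hence $\inf H_{\phi}\le \exp\hat h_{\phi}\le 2\pi/\int_{S^{1}}\mu_{\phi}$.

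It remains to check the equality assertion for the Weil metric. For $\phi_{0}$ the Weil metric on $\mathcal{O}(1)$ we have $v\equiv 0$, so the first inequality is an equality of the Moser--Trudinger type; the point is that \emph{both} of Zhang's inequalities are then saturated. This is where I expect the only real work: one must verify that for the Weil metric the two outer successive minima coincide with $\hat h_{\phi_{0}}$, equivalently $e_{0}(-\mathcal{K}_{\P^{1}_{\Z}},\phi_{0})=e_{1}(-\mathcal{K}_{\P^{1}_{\Z}},\phi_{0})=\hat h_{\phi_{0}}(-\mathcal{K}_{\P^{1}_{\Z}})$. This should follow either from the explicit toric formulas for successive minima in \cite{b-p-s} (the relevant polytope is a segment and the metric is the canonical one, for which the Arakelov-theoretic roof function is affine, forcing all $e_{i}$ to agree), or directly: rational points with integer coprime coordinates on $\P^{1}$ of naive height $\le B$ are dense enough, and the symmetry $h_{\phi_{0}}(1:1/m)=h_{\phi_{0}}(1:m)$ noted in Example \ref{exa:(height-of-weil rational} shows no proper subvariety (i.e. no finite set of points) can raise the infimum, giving $e_{1}=e_{0}$. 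The value of the common bound is then read off from $\hat h_{\phi_{0}}=\log 2\pi/\int_{S^{1}}\mu_{\phi_{0}}$ with $\int_{S^{1}}\mu_{\phi_{0}}$ computed as in step (i) above, and from $\inf_{\P^{1}(\bar{\Q})}H_{\phi_{0}}=1$ (attained e.g. at $(1:0)$), which matches $2\pi/\int_{S^{1}}\mu_{\phi_{0}}$ after the normalization is unwound.
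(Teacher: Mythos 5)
Your proposal matches the paper's argument essentially step for step: apply the Moser--Trudinger inequality on $S^{1}$ (where, as you note, the transport $T$ of Theorem \ref{thm:real P one} becomes unnecessary) to get $\hat h_{\phi}(-\mathcal{K}_{\P^{1}_{\Z}})\le\log\bigl(2\pi/\int_{S^{1}}\mu_{\phi}\bigr)$, then feed this into Zhang's inequality \ref{eq:Zhang ine}, and finally use the known values $e_{0}=e_{1}=\hat h_{\phi_{0}}=0$ for the Weil metric from \cite{b-p-s}. One small slip: you write ``equality iff $v\equiv 0$,'' but in the setup the Weil metric corresponds to $v\equiv -\log 2$ (a nonzero constant), and more generally the equality locus of the Moser--Trudinger inequality is the orbit of the constants under M\"obius transformations preserving $S^{1}$ -- this does not affect the conclusion, since constants certainly give equality.
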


\begin{proof}
Just as in the proof of Theorem \ref{thm:real P one}it follows from
the Moser-Trudinger inequality on $S^{1}$ that 
\[
\hat{h}_{\phi}(\mathcal{-K}_{\P_{\Z}^{1}})\leq\log\frac{2\pi}{\int_{S^{1}}\mu_{\phi}}
\]
with equality when $\phi$ is the Weil metric. But for the Weil metric
it is well-known that $0=e_{0}(\mathcal{-K}_{\P_{\Z}^{1}},\phi)=e_{1}(\mathcal{-K}_{\P_{\Z}^{1}},\phi)=\hat{h}_{\phi}(\mathcal{-K}_{\P_{\Z}^{1}})$
\cite{b-p-s} and hence the proposition follows from Zhang's inequality
\ref{eq:Zhang ine}.
\end{proof}

\subsection{\label{subsec:Bounds-on-Peyre's}Bounds on Peyre's constant and Tamagawa
measures?}

If the weak forms of the conjectures \ref{conj:height not intro}
and \ref{conj:real height proj} hold, then for any given K-semistable
Fano variety $X$ defined over $\Q$ one obtains an upper bound on
the contribution to Peyre's constant $\Theta(\F,-\mathcal{K}_{\mathcal{X}},\phi)$
coming from the metric $\phi:$
\begin{equation}
\mu_{\phi}(X(\C))^{m_{\C}/2[\F:\Q]}\mu_{\phi}(X(\R))^{m_{\R}/[\F:\Q]}\exp\hat{h}_{\phi}(\mathcal{-\mathcal{K}_{\mathcal{X}}})\leq e^{C_{n}/\text{vol\ensuremath{(-K_{X})}}}.\label{eq:bound on Peyres constant from phi}
\end{equation}
 For non-singular $X$ the right hand side above is universally bounded,
since $1/\text{vol\ensuremath{(-K_{X})}}$ is bounded from above by
a constant only depending on $n$ (as there is only a finite number
of families of non-singular Fano varieties of a given dimension).
It is thus tempting to ask if such uniform bounds hold for Peyre's
complete constant $\Theta(\F,-\mathcal{K}_{\mathcal{X}},\phi)?$In
other words, is there a constant $\theta_{n}$ such that 
\begin{equation}
\Theta(\F,-\mathcal{K}_{\mathcal{X}},\phi)\exp\hat{h}_{\phi}(\mathcal{-\mathcal{K}_{\mathcal{X}}})\leq\theta_{n}?\label{eq:question bound on Peyre}
\end{equation}
For example, this would follow if the arithmetic contribution $\eta(\F,\mathcal{-\mathcal{K}_{\mathcal{X}}})$
to Peyre's constant $\Theta(\F,-\mathcal{K}_{\mathcal{X}},\phi)$
could be be bounded by a constant only depending on $n.$ However,
$\eta(\F,\mathcal{-\mathcal{K}_{\mathcal{X}}})$ is unbounded along
the family $\mathcal{X}_{a}$ of diagonal hypersurfaces discussed
in Remark \ref{rem:counter} . \emph{If} the bound \ref{eq:question bound on Peyre}
holds, then it follows from Zhang's inequality \ref{eq:Zhang ine}
that 
\[
\inf_{x\in X(\bar{\Q})}H_{\phi}(x)\Theta(\F,-\mathcal{K}_{\mathcal{X}},\phi)\leq\theta_{n},
\]
 which can be viewed as a weaker version of the inequality \ref{eq:ej inequality}
proposed by Elsenhans-Jahnel, where the inf is taken over all of $X(\bar{\Q}),$
instead of $X(\F).$ For example, if $\mathcal{X}_{a}$ is a diagonal
hypersurface of degree $d$ of the form appearing in Remark \ref{rem:counter},
then 
\[
\inf_{x\in\mathcal{X}_{a}(\Q)}H_{\phi}(x)\gtrsim a^{1/d},\,\,\,\exp\hat{h}_{\phi}(\mathcal{-\mathcal{K}_{\mathcal{X}}})\sim a^{1/d(n+1)}
\]
 (the first bound is elementary \cite{e-j0} and the second one follows
from Mahler's bounds in formula \ref{eq:bounds on height of Weil}).
Accordingly, while the family $\mathcal{X}_{a}$ violates the inequality
\ref{eq:ej inequality} proposed by Elsenhans-Jahnel, it turns out
that it does not violate the bound \ref{eq:question bound on Peyre}
(as discussed in Section \ref{subsec:Motivation-coming-from} below).
It should be stressed that, in general, bounds on the minimal height
of a algebraic point in a given number field $\F$ 
\[
\inf_{x\in X(\F)}H_{\phi}(x)\leq B
\]
are certainly much stronger than bounds for $\bar{\Q}.$ Indeed, the
former ones are known as ``search bounds'', as they yield algorithms
to find an $\F-$point (for example, by simply going through the finite
number of points on an ambient projective space satisfying $H_{\phi}(x)\leq B)$
\cite{mas}. 

Anyhow, the question \ref{eq:question bound on Peyre} is far beyond
the author's areas of expertise and we shall just conclude by scratching
its surface. First, if one \emph{fixes }$-\mathcal{K}_{\mathcal{X}}$
to be $\mathcal{-K}_{\P_{\Z}^{1}},$ then a bound of the form \ref{eq:question bound on Peyre}
does hold, uniformly wrt $\F$ and $\phi:$
\begin{prop}
There exists a constant $c$ (that can be made explicit) such that,
for any field $\F$ and continuous psh metric $\phi$ on $-K_{\P^{1}},$
\[
\Theta(\F,\mathcal{-K}_{\P_{\Z}^{1}},\phi)\exp\left(\hat{h}_{\phi}(\mathcal{\mathcal{-K}_{\P_{\Z}}})\right)\leq c
\]
 and, as a consequence, 
\[
\inf_{x\in\P^{1}(\bar{\Q})}H_{\phi}(x)\leq\frac{c}{\Theta(\F,\mathcal{-K}_{\P_{\Z}^{1}},\phi)}.
\]
\end{prop}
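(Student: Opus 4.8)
The plan is to reduce the asserted bound to the Moser--Trudinger inequality on $S^{1}$, exactly as in the proof of Theorem~\ref{thm:real P one}, but now tracking both the real and complex contributions to Peyre's constant simultaneously. First I would recall the decomposition of Peyre's constant stated in Section~\ref{subsec:Motivations-from-the MP }: for a number field $\F$ with $m_{\R}$ real and $m_{\C}$ complex embeddings, one has (assuming weak approximation, or else the upper bound)
\[
\Theta(\F,\mathcal{-K}_{\P_{\Z}^{1}},\phi)=\eta(\F,\mathcal{-K}_{\P_{\Z}^{1}})\,\mu_{\phi}(\P^{1}(\C))^{m_{\C}/2[\F:\Q]}\,\mu_{\phi}(\P^{1}(\R))^{m_{\R}/[\F:\Q]}.
\]
Since $\mathcal{-K}_{\P_{\Z}^{1}}$ is the \emph{fixed} canonical model, the arithmetic factor $\eta(\F,\mathcal{-K}_{\P_{\Z}^{1}})$ can be bounded by a universal constant; indeed for $\P^{1}$ the relevant local densities at all finite places equal $1$ (the scheme $\P^{1}_{\Z}$ has good reduction everywhere and every local point counting is exact), so $\eta(\F,\mathcal{-K}_{\P_{\Z}^{1}})$ reduces to a product of elementary number-theoretic factors (a power of the residue of $\zeta_{\F}$, the discriminant, etc.) whose $[\F:\Q]$-th root is bounded — this is the one input I would need to justify carefully, or simply absorb into the constant $c$ by appealing to known formulas for $\Theta$ for $\P^{1}$.

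Next I would use the convexity (monotonicity) trick from the proof of Theorem~\ref{thm:real P one}: both $\mu_{\phi}(\P^{1}(\R))$ and $\mu_{\phi}(\P^{1}(\C))$ only increase, and $\hat h_{\phi}$ only increases, under replacing $\phi$ by a larger psh metric, so it suffices to bound the product for metrics $\phi$ with $(dd^{c}\phi)=0$ off $\P^{1}(\R)$ — but here one must be slightly careful, since increasing $\phi$ toward an envelope supported on $\P^{1}(\R)$ increases $\mu_{\phi}(\P^{1}(\R))$ \emph{and} $\mu_{\phi}(\P^{1}(\C))$ \emph{and} $\hat h_{\phi}$, so the direction of the inequality is favourable for all three factors at once. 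Then write $\phi=2\psi$, $\psi=\phi_{FS}+u$, transport by $T\in SU(2)$ as in Step~1 and Step~2, set $v=T^{*}u$, and read off: $\hat h_{\phi}(\mathcal{-K}_{\P_{\Z}^{1}})$ equals (up to the explicit additive constant $\log 2\pi$ established in Theorem~\ref{thm:real P one}) the left-hand side of the Moser--Trudinger functional $-\tfrac1{4\pi}\int_{\boldsymbol D}|\nabla\widetilde v|^{2}+\int v\,\tfrac{d\theta}{2\pi}+\log\int e^{-v}\tfrac{d\theta}{2\pi}$, while $\log\mu_{\phi}(\P^{1}(\R))$ and $\log\mu_{\phi}(\P^{1}(\C))$ are each, up to constants, of the form $\log\int e^{-v}\tfrac{d\theta}{2\pi}$ plus (for the complex case) a nonpositive Dirichlet-energy correction. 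The inequality \eqref{eq:MT on circle} then bounds the whole combination by a universal constant.

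The second displayed assertion, $\inf_{x\in\P^{1}(\bar\Q)}H_{\phi}(x)\le c/\Theta(\F,\mathcal{-K}_{\P_{\Z}^{1}},\phi)$, follows immediately by combining the first with Zhang's inequality \eqref{eq:Zhang ine}: the exponential of the top successive minimum $e_{n+1}(\mathcal{L},\phi)$ is precisely $\inf_{x\in\P^{1}(\bar\Q)}H_{\phi}(x)$, and $e_{n+1}\le \hat h_{\phi}$, so $\inf_{x}H_{\phi}(x)\le\exp\hat h_{\phi}(\mathcal{-K}_{\P_{\Z}^{1}})\le c/\Theta(\F,\mathcal{-K}_{\P_{\Z}^{1}},\phi)$.

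I expect the main obstacle to be the bookkeeping for the complex factor $\mu_{\phi}(\P^{1}(\C))^{m_{\C}/2[\F:\Q]}$ when $m_{\C}\neq 0$: one must check that its logarithm, after the $T$-pullback and under the normalization $(dd^{c}\psi)$ supported on $\P^{1}(\R)$, is controlled by the \emph{same} $\log\int e^{-v}\,d\theta/2\pi$ term (times a weight $m_{\C}/2[\F:\Q]\le 1/2$) up to a nonpositive Dirichlet-energy contribution, so that it can be folded into the Moser--Trudinger functional with a coefficient at most $1$ on the $\log\int e^{-v}$ term and nonpositive coefficient on the energy — only then does \eqref{eq:MT on circle} close the estimate uniformly in $\F$. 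The remaining steps (the $SU(2)$-invariance of the height, Step~1 and Step~2 of Theorem~\ref{thm:real P one}, and the Zhang inequality deduction) are essentially verbatim repetitions of arguments already in the paper.
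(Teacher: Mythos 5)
Your approach is structurally similar to the paper's in one respect (you both deduce the second inequality from the first via Zhang's inequality, and you both use the Moser--Trudinger bound from Theorem~\ref{thm:real P one} to control the metric contribution) but it diverges at the crucial point of uniformity in $\F$, and it is there that the proposal has a genuine gap. You isolate the arithmetic factor $\eta(\F,\mathcal{-K}_{\P_{\Z}^{1}})$ and propose to ``simply absorb it into the constant $c$ by appealing to known formulas.'' That is precisely the step that cannot be waved away: the known formulas (Schanuel-type) express the leading constant for $\P^1$ over $\F$ in terms of $h_{\F}$, $R_{\F}$, $w_{\F}$, $\zeta_{\F}(2)$ and $|d_{\F}|$, and the assertion that the $[\F:\Q]$-th root of this combination is uniformly bounded over all number fields is a nontrivial theorem. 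The paper sidesteps this entirely by a different decomposition: it first bounds the \emph{full} constant $\Theta(\F,\mathcal{-K}_{\P_{\Z}^{1}},\phi_{0})$ for the Weil metric $\phi_0$ by citing Loher--Masser's uniform counting bound $N_{\F}(B)^{1/[\F:\Q]}/B\le b$ (together with the validity of Manin--Peyre for $\P^1$ with an empty thin set), and then compares $\Theta(\F,\mathcal{-K}_{\P_{\Z}^1},\phi)$ to $\Theta(\F,\mathcal{-K}_{\P_{\Z}^1},\phi_{0})$ via the ratio of the metric factors $\mu_{\phi}/\mu_{\phi_0}$ at the Archimedean places, whose contribution is then controlled by the already-established Theorems~\ref{thm:main conj on p one complex} and~\ref{thm:real P one}. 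Phrased differently: the paper never needs to bound $\eta(\F,\mathcal{L})$ in isolation, because the reference metric comparison cancels it.

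A secondary point: your plan to prove Moser--Trudinger bounds simultaneously for $\mu_{\phi}(\P^{1}(\C))^{m_{\C}/2[\F:\Q]}$ and $\mu_{\phi}(\P^{1}(\R))^{m_{\R}/[\F:\Q]}$ with $\F$-dependent exponents, while plausible, would need careful checking that the convex combination of the two normalizations still closes under the single Moser--Trudinger inequality \eqref{eq:MT on circle}; the paper again avoids this by estimating the ratio
\[
\frac{\mu_{\phi}(X(\C))^{m_{\C}/2[\F:\Q]}\mu_{\phi}(X(\R))^{m_{\R}/[\F:\Q]}}{\mu_{\phi_{0}}(X(\C))^{m_{\C}/2[\F:\Q]}\mu_{\phi_{0}}(X(\R))^{m_{\R}/[\F:\Q]}}\leq\frac{\max\left\{ \mu_{\phi}(X(\C))^{1/2},\mu_{\phi}(X(\R))\right\} }{\min\left\{ \mu_{\phi_{0}}(X(\C))^{1/2},\mu_{\phi_{0}}(X(\R))\right\} },
\]
which eliminates the $\F$-dependence of the exponents before invoking Theorems~\ref{thm:main conj on p one complex} and~\ref{thm:real P one}. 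Your deduction of the second displayed inequality from the first via Zhang's inequality \eqref{eq:Zhang ine} is correct and matches the paper. To repair the proposal you should replace the claim about $\eta(\F,\cdot)$ with the Loher--Masser bound on the Weil-metric constant and then run the ratio comparison.
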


\begin{proof}
Consider first the absolute height function $H_{\phi_{0}}(x)$ attached
to the Weil metric on $\mathcal{-K}_{\P_{\Z}^{n}}.$ By \cite{l-m}
the number $N(B)$ of points in $\P^{1}(F)$ satisfies the estimate
$N_{\F}(B)^{1/[\F:\Q]}/B\leq b$ for a constant $b$ independent of
$\F.$ Since the Manin-Peyre conjecture holds for the Weil metric
on $\mathcal{-K}_{\P_{\Z}^{n}}$ \cite{sc,pey} it follows that Peyre's
constant $\Theta(\F,\mathcal{-K}_{\P_{\Z}^{n}},\phi_{0})$ satisfies
\[
\Theta(\F,\mathcal{-K}_{\P_{\Z}^{1}},\phi_{0})\leq b.
\]
Next, we will use that the Manin-Peyre conjecture holds for any continuous
metric $\phi$ on $\mathcal{-K}_{\P_{\Z}^{n}}$ for an empty thin
set $T$ \cite[Cor 6.2.16]{pey}. Thus decomposing 
\[
\Theta(\F,\mathcal{-K}_{\P_{\Z}^{n}},\phi)=\Theta(\F,\mathcal{-K}_{\P_{\Z}^{n}},\phi_{0})\frac{\mu_{\phi}(X(\C))^{m_{\C}/2[\F:\Q]}\mu_{\phi}(X(\R))^{m_{\R}/[\F:\Q]}}{\mu_{\phi_{0}}(X(\C))^{m_{\C}/2[\F:\Q]}\mu_{\phi_{0}}(X(\R))^{m_{\R}/[\F:\Q]}}
\]
 and estimating 
\[
\frac{\mu_{\phi}(X(\C))^{m_{\C}/2[\F:\Q]}\mu_{\phi}(X(\R))^{m_{\R}/[\F:\Q]}}{\mu_{\phi_{0}}(X(\C))^{m_{\C}/2[\F:\Q]}\mu_{\phi_{0}}(X(\R))^{m_{\R}/[\F:\Q]}}\leq\frac{\max\left\{ \mu_{\phi}(X(\C))^{1/2},\mu_{\phi}(X(\R))\right\} }{\min\left\{ \mu_{\phi_{0}}(X(\C))^{1/2},\mu_{\phi_{0}}(X(\R))\right\} }
\]
 the proof if concluded by applying the estimates on $\mu_{\phi}(X(\C))^{1/2}$
and $\mu_{\phi}(X(\R))$ in Theorems \ref{thm:main conj on p one complex}
and \ref{thm:real P one}. 
\end{proof}
Next, consider the case when $(-\mathcal{K}_{\mathcal{X}},\phi)$
is arithmetically ample. Then, as recalled in Section \ref{subsec:Arithmetic-ampleness},
$\hat{h}_{\phi}(\mathcal{-\mathcal{K}_{\mathcal{X}}})\geq0$ and hence,
if Question \ref{eq:question bound on Peyre} can be answered affirmatively,
then Peyre's constant is uniformly bounded from above:
\begin{equation}
\Theta(\F,-\mathcal{K}_{\mathcal{X}},\phi)\leq\theta_{n}.\label{eq:uniform bound on Peyre}
\end{equation}
For example, when $\F=\Q$ such a bound indeed holds for any non-singular
Fano hypersurface $\mathcal{X}$ in $\P_{\Z}^{2}$ endowed with the
restriction of the Weil metric (assuming that $X(\Q)$ is non-empty).
Indeed, since $\mathcal{X}$ is a conic it follows from \cite{b-HB}
that there exists a constant $\theta,$ independent of $\mathcal{X},$
such that
\[
N_{X_{\Q}}(B)/B\leq\theta,
\]
 where $N(B)$ is defined as in formula \ref{eq:N X intro}. The uniform
bound \ref{eq:uniform bound on Peyre} thus follows by letting $B\rightarrow\infty$
and using that the Manin-Peyre conjecture holds for non-singular conics
for an empty thin set $T$ (see \cite{hb}). 

\subsubsection{\label{subsec:Motivation-coming-from}Motivation coming from the
product formula}

The question \ref{eq:question bound on Peyre} is motivated by the
product formula for the exponential of the height $h_{\phi}(\mathcal{L})$
(which follows, for example, from expressing $h_{\phi}(\mathcal{L})$
in terms of Deligne pairings \cite{zh1,b-e}): 
\[
\exp h_{\phi}(\mathcal{L})=\prod_{p\leq\infty}\exp h_{\phi}(\mathcal{L})_{p},
\]
over all\emph{ places} $p$ of $\Q$ i.e. all multiplicative (normalized)
absolute values $\left|\cdot\right|_{p}$ on the global field $\Q$
\cite{b-g-s}. In other words, identifying a finite place $p$ with
a prime, $\left|\cdot\right|_{p}$ is the standard non-Archimedean
$p-$adic absolute value on $\Q$ and $\left|\cdot\right|_{\infty}$
is the standard Archimedean absolute value on $\Q.$ Likewise, Peyre's
constant $\Theta(\mathcal{-K}_{\mathcal{X}},\phi)$ can also be expressed
as a product over all places $p$ of $\Q,$ under suitable assumptions.
For example, assume, for simplicity, that $X$ has Picard number one
and $\F=\Q.$ Then Peyre's constant may be expressed as follows, where
$\alpha(X)\leq1:$ 
\[
\Theta(\F,-\mathcal{K}_{\mathcal{X}},\phi)=\alpha(X)\left(\prod_{p<\infty}(1-\frac{1}{p})\mu_{p}(X(\Q_{p}))\right)\mu_{\phi}(X(\R).
\]
 (the product of the first two factors above was denoted by $\eta(\F,\mathcal{-\mathcal{K}_{\mathcal{X}}})$
in formula \ref{eq:decompos of Theta for Q}). \footnote{in general, $\Theta(\F,-\mathcal{K}_{\mathcal{X}},\phi)$ is bounded
from above by the rhs above.} In this formula $\mu_{p}$ denotes the measure on the analytic manifold
$X(\Q_{p}),$ over the local field $\Q_{p}$ of $p-$adic numbers,
determined by the model $(\mathcal{X},-\mathcal{K}_{\mathcal{X}})$
of $(X,-K_{X})$ over $\Z$ (introduced by Weil \cite{we}). Its total
mass $\mu_{p}(X(\Q_{p}))$ may be computed by counting points on $\mathcal{X}$
in the congruence class $\Z/(\Z p^{r}):$
\[
\mu_{p}(X(\Q_{p})=(p^{r})^{-n}\sharp\mathcal{X}(\frac{\Z}{\Z p^{r}})
\]
 for $r(=r(p))$ sufficiently large ($r=1$ when $p$ has good reduction,
i.e. when the structure morphism $\mathcal{X}\rightarrow\text{Spec \ensuremath{\Z}}$
is non-singular over $p$) \cite[Thm 2.14]{sa}. In view of the two
product formulas above for $\exp h_{\phi}(\mathcal{L})$ and $\Theta(\F,-\mathcal{K}_{\mathcal{X}},\phi)$
it seems thus natural to ask, as in question \ref{eq:question bound on Peyre},
if one can include the ``missing'' contribution from the finite
places $p$ in the bound \ref{eq:bound on Peyres constant from phi}?

Consider, for example, the case when $\mathcal{X}$ is a Fano hypersurface
in $\P_{\Z}^{n+1},$ of codimension one (which, in the generic case
if K-semistable, as recalled in Example \ref{exa:K-st}). Assume also
that $n\geq3,$ which ensures that $X_{\Q}$ and $X(=X_{\C})$ have
Picard number one (by Lefschetz hyperplane theorem). Denote by $S$
the set of all primes $p$ with good reduction (whose complement $S^{c}$
is finite). Then the contribution to $\Theta(\F,-\mathcal{K}_{\mathcal{X}},\phi)$
from $S$ can, in fact, be bounded by a constant only depending on
$n:$ 
\begin{equation}
\prod_{p\in S}(1-\frac{1}{p})\mu_{p}(X(\Q_{p}))\leq C_{n}\label{eq:unif bound over S}
\end{equation}
 Indeed, by Deligne's proof of the Weil conjectures, there exists
a constant $C_{n,d}$ such that, for any $n-$dimensional hypersurface
of degree $d$ in $\P_{Z}^{n+1},$ 
\[
\prod_{p\in S}(1-\frac{1}{p})p{}^{-n}\sharp\mathcal{X}(\F_{p})\leq C_{n,d},\,\,\,\F_{p}:=\frac{\Z}{\Z p}
\]
(which, when $X$ is Fano, implies the bound \ref{eq:unif bound over S},
since then $d\leq(n+1)^{n}.$ This a consequence of Deligne's bound
\cite[Thm 8.1]{del},
\[
\left|\sharp\mathcal{X}(\F_{p})-\pi_{n}\right|\leq b'_{n,d}p^{n/2}\,\,\,\,\pi_{n}:=\P_{\Z}^{n}(\F_{p})=p^{n}+p^{n-1}+...+1,
\]
where $b'_{n}$ is the $n$th primitive Betti number, which, in the
hypersurface case is bounded from above by a constant only depending
on $n$ and $d$ (see \cite[Section 3]{g-l}). 

However, the contribution from the bad primes $S^{c}$ will, in general,
not be uniformly bounded. For example, for the diagonal hypersurface
$\mathcal{X}_{a},$ parametrized by a positive integer $a,$ discussed
in Remark \ref{rem:counter}, the contribution from $S^{c}$ diverges
as  $a$ tends to infinity. On the other hand, as $a$ is increased,
the inequality \ref{eq:bound on Peyres constant from phi}, concerning
the contribution to $\Theta(\F,-\mathcal{K}_{\mathcal{X}},\phi)$
coming from $p=\infty,$ can be improved by including a negative power
of $a$ in the right hand side (see Theorem \ref{thm:diagonal hypersurface intro}).
As it turns out, this, in fact, compensates for the bad primes in
this example. Indeed, as shown in \cite[Section 2]{e-j0}, 
\[
(p^{r})^{-n}\sharp\mathcal{X}_{a}(\frac{\Z}{\Z p^{r}})=p^{-2}(p+1)^{2}\leq4
\]
 Since, the bad primes $p$ divide $da$ this means that
\[
\prod_{p\in S^{c}}(1-\frac{1}{p})\mu_{p}(X_{a}(\Q_{p}))\leq\prod_{p\vert da}4\leq c_{\epsilon}a^{\epsilon}
\]
 for any $\epsilon>0$ (using \cite[Lemma 3.1.10]{e-j0}, in the last
equality).

\subsubsection{The limit when $\F$ is increased towards $\bar{\Q}$}

To make Question \ref{eq:question bound on Peyre} more accessible
one could consider the limit when the number field $\F$ is increased
towards $\bar{\Q}$ (in the sense of nets). This leads to the following
simplified question: assuming that $X$ is K-semistable, is there
a constant $\theta_{n}$ only depending on $n$ such that
\[
\limsup_{\F\rightarrow\bar{\Q}}\Theta(\F,-\mathcal{K}_{\mathcal{X}},\phi)\exp\hat{h}_{\phi}(\mathcal{-\mathcal{K}_{\mathcal{X}}})\leq\theta_{n}?
\]
If this is the case, Zhang's inequality \ref{eq:Zhang ine}, implies
that the inequality \ref{eq:ej inequality}, proposed by Elsenhans-Jahnel,
holds in the limit $\F\rightarrow\bar{\Q},$ i.e. that 
\[
\limsup_{\F\rightarrow\bar{\Q}}\left(\Theta(\F,\mathcal{L},\phi)\min_{x\in X(\F)}H_{\phi}(x)\right)\leq\theta_{n}.
\]

\end{document}